\newcommand{\NN}{\mathbb N}
\newcommand{\RR}{\mathbb R}
\newcommand{\CC}{\mathbb C}
\newcommand{\DtN}{Dirichlet-to-Neumann }
\newcommand{\ep}{\epsilon}
\newcommand{\pa}{\partial}
\newcommand{\scal}[2]{\langle #1, #2 \rangle}
\newcommand{\dd}{\mathrm{d}} 
\newcommand{\id}{\mathbbm{1}}
\DeclareMathOperator{\ddiv}{div}
\DeclareMathOperator{\sspan}{span}
\DeclareMathOperator{\diag}{diag}
\DeclareMathOperator{\supp}{supp}
\newcommand{\normal}{\mathbf{n}}
\newtheorem{lemma}{Lemma}[section]
\newtheorem{theorem}[lemma]{Theorem}
\newtheorem{proposition}[lemma]{Proposition}
\newtheorem{coro}[lemma]{Corollary}
\newtheorem{assumption}{Assumption}
\newtheorem*{assumption*}{Assumption}
\theoremstyle{definition}
\theoremstyle{remark}
\newtheorem{remark}[lemma]{Remark}
\newmdtheoremenv{rem}[lemma]{Remark}
\let\Im\undefined
\let\Re\undefined
\DeclareMathOperator{\Re}{Re}
\DeclareMathOperator{\Im}{Im}
\newcommand{\Ctwo}{\tilde{C}^{(2)}}
\newcommand{\Cone}{\tilde{C}^{(1)}}
\title{A Partial Data Problem in Linear Elasticity}
\author[M. Doll]{Moritz Doll}
\address{Department 3 -- Mathematics, University of Bremen, Bibliotheksstr. 5, \newline\indent D-28359 Bremen, Germany}
\email{doll[AT]uni-bremen.de}
\author[A. Froehly]{Andr\'e Froehly}
\address{Institut f\"ur Analysis, Leibniz Universit\"at Hannover, Welfengarten 1, \newline\indent D-30167 Hannover, Germany}
\email{andre.haenel[AT]math.uni-hannover.de}
\author[R. Schulz]{Ren\'e Schulz}
\address{Institut f\"ur Analysis, Leibniz Universit\"at Hannover, Welfengarten 1, \newline\indent D-30167 Hannover, Germany}
\email{rschulz[AT]math.uni-hannover.de}
\thanks{We would like to thank Gunther Uhlmann for helpful suggestions and Masaru Ikehata for clarifying the history of the reduced Lam\'e system.}
\begin{document}
\begin{abstract}
	We discuss the determination of the Lam\'e parameters of an elastic material by the means of boundary measurements.
    We will combine previous results of Eskin--Ralston and Isakov to prove inverse results in the case of bounded domains with partial data.
    Moreover, we generalise these results to infinite cylinders. 
\end{abstract}
\maketitle
\section{Introduction}

Given an inhomogeneous, isotropic elastic body, we want to determine the Lam\'e parameters $\lambda$ and $\mu$ by measuring the ``deformation-to-stress'' map $\Lambda$ on the boundary.
Mathematically, $\Lambda$ is the \DtN operator.

Let $\Omega \subseteq \RR^2 \times \RR_{>0}$ be a domain with smooth boundary $\Gamma \coloneqq \partial \Omega$ and
set $\Gamma_1 \coloneqq \Gamma \cap (\RR^2 \times \RR_{>0})$ and $\Gamma_0 = \Gamma \setminus \overline{\Gamma_1}$.
We assume that $\Gamma_0$ has Lipschitz boundary.
\begin{center}
\begin{figure}[ht!]
\label{fig:setup-bdd}
\begin{tikzpicture}

    \filldraw[fill=lightgray, draw=black] plot [smooth cycle] coordinates {(-0.5,-0.3) (0.5,-0.3) (1,0) (0.7,1) (0.5,2) (-1.1,2) (-1,0)};
    \filldraw[fill=white,draw=white] (-0.5,-0.3) rectangle (0.5,-0.4);

	\draw[very thick] (-0.5,-0.3) -- (0.5,-0.3) node [below, midway] {\textcolor{black}{$\Gamma_0$}};

	\node at (-0.3,1) {$\Omega$};
    \node at (1.1,1.2) {$\Gamma_1$};
\end{tikzpicture}
\caption{Bounded domain}
\end{figure}
\end{center}
We consider two cases: bounded domains and infinite cylinders.
An unbounded domain $\Omega$ is called an infinite cylinder if 
there exists a set $G \subseteq \RR^2$ with smooth boundary $\partial G$ and $R> 0$ such that
\begin{align*}
	\Omega \cap (\RR^3 \setminus [-R,R]^3) = \RR \times G . 
\end{align*}
Moreover, for the sake of simplicity, we additionally assume that $\Gamma_1 = \partial \Omega \cap (\RR^2 \times \{0\})$ is bounded.

The basic assumptions on the Lam\'e parameters $\mu_j , \lambda_j \in  C^\infty(\overline{\Omega})$, $j=1,2$ are
\begin{align}
\label{eq:lameass}
	\mu_j (x) , 3\lambda_j(x) + 2 \mu_j(x)  > 0 \text{ for all } x \in \overline{\Omega} . 
\end{align}
Since the proof of the main result will be based on a reflection argument, we also assume the following:
\begin{assumption}
    There exists an open set $U \subseteq \RR^3$ with $\overline{\Omega} \subseteq U$ and functions $\tilde\lambda_j, \tilde\mu_j \in C^\infty(U)$ such that
    $\tilde\lambda_j|_{\overline\Omega} = \lambda_j$ and $\tilde\mu_j|_{\overline\Omega} = \mu_j$ with the property that
    \[\partial_{x_3}^k \tilde\mu_j(x) = 0,\quad \partial_{x_3}^k \tilde\lambda_j(x) =0 \quad\text{ for all odd } k \in \NN\] and $x\in U \cap (\RR^2 \times \{0\})$.
\end{assumption}%

For the infinite cylinder, we add the following restriction on the Lam\'e parameters:
\begin{assumption}
    There exist a compact set $K \subseteq \overline{\Omega}$ such that
    $\lambda_1(x) = \lambda_2(x)$ and $\mu_1(x) = \mu_2(x)$ for $x \in \overline{\Omega} \setminus K$, and $\lambda_j$ and $\mu_j$ are independent of $x_3$ on $\overline{\Omega} \setminus K$.
\end{assumption}
\begin{center}
\begin{figure}[ht!]
\label{fig:setup}
\begin{tikzpicture}

	\fill[lightgray, opacity=1/8] (-3,0) rectangle (-1.5,2);
	\fill[lightgray, opacity=1/8] (1.5,0) rectangle (3,2);
    \fill[lightgray, opacity=1/2] (-1.5,-0.5) rectangle (1.5,2.5);
    \draw[fill=lightgray, draw=gray, domain=-5.9:5.9, samples=50] (1,0) to[bend right] (0.8,2) -- plot ({-0.15*\x-0.1}, {2 + 0.2/cosh(\x - 1) - 0.2/cosh(\x + 1)}) -- (-1,2) to[bend right] (-1,0) to[bend left] (-0.8,-0.13) to[bend right] (-0.5,-0.3) -- (0.5,-0.3) to[bend right] (0.8,-0.13) to[bend left] (1,0);

	\draw (-3,2)--(-1,2);
	\draw (0.8,2)--(3,2);
    \draw (-3,0)--(-1,0);
    \draw (1,0) -- (3,0) node [below right] {$\Gamma$};
    \draw[domain=-6:6,samples=50] plot ({-0.15*\x-0.1}, {2 + 0.2/cosh(\x - 1) - 0.2/cosh(\x + 1)});
    \draw (-1,0) to[bend left] (-0.8,-0.13) to[bend right] (-0.5,-0.3); 
    \draw (1,0) to[bend right] (0.8,-0.13) to[bend left] (0.5,-0.3); 
	\draw[very thick] (-0.5,-0.3) -- (0.5,-0.3) node [below left, midway] {\textcolor{black}{$\Gamma_0$}};

	\node at (-0,1) {$K$};
	\node at (2.5,1) {$\Omega$};
    \node at (2.3,2.3) {$[-R,R]^3$};
\end{tikzpicture}
\caption{Infinite cylinder}
\end{figure}
\end{center}

We consider the \DtN operator $\Lambda^{(j)}$ for Lam\'e parameters $\lambda_j,\mu_j$ with simply supported boundary conditions\footnote{We could also choose soft clamped boundary conditions, see Remark~\ref{rem:clamped} below.} on $\Gamma_0$.
\begin{theorem}\label{thm:main1}
	 Let  $\Lambda^{(1)} = \Lambda^{(2)}$ and  $\|f\|^2_\tau \coloneqq 
	\int_{\Omega} e^{2\tau|x|^2} |f(x)|^2 dx.$ Then for $\tau \gg 0$, we have 
    	\begin{align*}
        	\|\lambda_1 +\mu_1 - (\lambda_2 + \mu_2) \|_\tau\le  C \tau^{-1} \|\mu_1 - \mu_2\|_\tau ,
    	\end{align*}
    	for a constant $C >0$ independent of $\tau$.
\end{theorem}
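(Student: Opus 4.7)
The overall strategy combines a reflection argument, to convert the partial data situation to a full data one, with a complex geometric optics (CGO) analysis of the Alessandrini-type integral identity that follows from the equality of the \DtN maps.

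\textbf{Step 1 (reflection and integral identity).}
First, I would exploit Assumption 1 to reflect $\Omega$ across the flat piece $\Gamma_0\subseteq\{x_3=0\}$, obtaining an extended domain $\Omega^\ast$ on which the extended parameters $\tilde\lambda_j,\tilde\mu_j$ are smooth and symmetric in $x_3$ (this is exactly what the vanishing of odd $x_3$-derivatives on $\{x_3=0\}$ is for). A solution $u$ of the Lam\'e system on $\Omega$ with simply-supported conditions on $\Gamma_0$ extends to a solution on $\Omega^\ast$ by taking its $x_3$-component odd and its tangential components even; the equality $\Lambda^{(1)}=\Lambda^{(2)}$ on $\Gamma_1$ then becomes equality of the corresponding \DtN maps on $\partial\Omega^\ast$. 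Pairing solutions against each other in the usual way yields the identity
\begin{equation*}
\int_{\Omega^\ast}\bigl[(\lambda_1-\lambda_2)\,\ddiv u^{(1)}\,\ddiv u^{(2)}+2(\mu_1-\mu_2)\,\epsilon(u^{(1)})\!:\!\epsilon(u^{(2)})\bigr]\,dx=0
\end{equation*}
for all pairs of solutions $u^{(j)}$ with matching traces on $\partial\Omega^\ast$.

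\textbf{Step 2 (CGO construction).}
Next, I would construct CGO solutions with the convex Carleman weight $\phi(x)=|x|^2$ suggested by the definition of $\|\cdot\|_\tau$. Following Eskin--Ralston, the principal symbol of the Lam\'e operator conjugated by $e^{\tau\phi}$ splits into a pressure block and a shear block, and one produces solutions of the form
\begin{equation*}
u^{(j)}_\tau(x)=e^{\tau\phi(x)+i\tau x\cdot\xi}\bigl(a^{(j)}_0(x)+\tau^{-1}a^{(j)}_1(x)+\cdots\bigr)+r^{(j)}_\tau(x),
\end{equation*}
with amplitudes determined by transport equations in each block and a remainder $r^{(j)}_\tau$ absorbed by Isakov-type Carleman estimates with the convex weight $|x|^2$. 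In the cylinder case, the Carleman estimate has to be localised to a neighbourhood of $K$ by means of Assumption 2 and a standard cutoff argument.

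\textbf{Step 3 (extraction of the estimate).}
Plugging these CGO solutions into the identity of Step 1 and balancing powers of $\tau$ gives a hierarchy of integral identities weighted by $e^{2\tau|x|^2}$. The key observation is that for shear-polarised amplitudes one has $\ddiv u^{(j)}_\tau=O(\tau^{-1})$, so the coefficient $\lambda_1-\lambda_2$ enters the leading identity only through the combination $\lambda+\mu$ at subprincipal order, whereas $\mu_1-\mu_2$ enters at leading order through $\epsilon(u^{(1)})\!:\!\epsilon(u^{(2)})$. After varying $\xi$ and the localisation centre of the amplitudes, the leading-order balance yields exactly
$\|\lambda_1+\mu_1-(\lambda_2+\mu_2)\|_\tau\le C\tau^{-1}\|\mu_1-\mu_2\|_\tau$.

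\textbf{Main obstacle.}
The central difficulty is carrying out the CGO construction for the Lam\'e \emph{system} with the convex Carleman weight $|x|^2$: the principal symbol couples the vector components and the weight does not satisfy the eikonal equation globally, so both the block diagonalisation and the Carleman estimates must be adapted to the system, in the spirit of Isakov. A careful bookkeeping of the subprincipal amplitudes is then needed to produce the single power of $\tau^{-1}$ in the claimed estimate, and the remainder $r^{(j)}_\tau$ must be controlled in the same weighted norm so that it is absorbed rather than competing with the $\tau^{-1}$ gain.
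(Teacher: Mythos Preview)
Your reflection idea in Step~1 is the right instinct, but both the direction of the argument and the parity assignments are off. The paper does not extend solutions of the partial-data problem on $\Omega$ to the doubled domain; it goes the other way. One constructs CGO solutions $w_i$ of $L^{(i)}w_i=0$ directly on $\Omega^{\times 2}=\Omega\cup\Gamma_0\cup\Omega^\vee$ (with no boundary condition on $\Gamma_0$), and then the antisymmetrised restriction $u_i=(w_i-w_i^\vee)|_\Omega$ satisfies the simply-supported condition on $\Gamma_0$. The simply-supported condition forces the \emph{tangential} components of $u$ to vanish on $\Gamma_0$, so those must be odd in $x_3$ and the normal component even --- the opposite of what you wrote. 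The role of $\Lambda^{(1)}=\Lambda^{(2)}$ is then to give $H_{\Omega^{\times 2}}(w_1,w_2)=H_{\Omega^{\times 2}}(w_1,w_2^\vee)$, and the reflected right-hand side is shown to be $\mathcal{O}(\tau^{-\infty})$ by repeated integration by parts in $x_3$, using that the linear CGO phase picks up a large real oscillation in that variable.

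The more serious gap is in Step~2. The weight $e^{2\tau|x|^2}$ in the definition of $\|\cdot\|_\tau$ is \emph{not} the CGO phase, and no Carleman estimate with convex weight for the Lam\'e system is used anywhere in the argument. The paper follows Eskin--Ralston: one first applies the Ikehata reduction, replacing the Lam\'e system by an auxiliary $4\times4$ system $M$ whose principal symbol is $-|\xi|^2\id_4$ (this, not a block diagonalisation of the conjugated symbol, is how the coupling is handled), and then constructs Sylvester--Uhlmann type CGOs $e^{i\zeta(\tau)\cdot x}(v_0+\tau^{-1}v_1+\cdots)$ with \emph{linear} complex phase $\zeta(\tau)=\tfrac12\ell_0+\tau\theta+\tau\rho(\tau)\Re\theta$, $\theta\cdot\theta=0$. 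The transport equations are $\bar\partial$-type equations $(2\theta\cdot\partial_x+A\cdot\theta)C_0=0$, solved via the $\bar\partial$-method, and the remainder is controlled by the standard $\Delta_\zeta$-estimate, not by an Isakov-type Carleman estimate.

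The weighted norm $\|\cdot\|_\tau$ enters only at the very end, after one has extracted from the leading CGO asymptotics a purely algebraic $2\times2$ system on $\Omega$ (the paper's Proposition~4.5):
\[
\begin{aligned}
-b_2 b_{21}-b_1 b_{12}&=v_{11},\\
\theta\cdot\partial_x b_{12}-b_2\,b_{22}\cdot\theta&=v_{12},\\
\theta\cdot\partial_x b_{21}-b_1\,b_{22}\cdot\theta&=v_{21},\\
\theta\cdot\partial_x(b_{22}\cdot\theta)+a_2 b_{12}+a_1 b_{21}&=v_{22},
\end{aligned}
\]
where $v_{11}$ is a nonvanishing multiple of $\lambda_1+\mu_1-(\lambda_2+\mu_2)$ and $v_{12},v_{21},v_{22}$ are controlled by $\mu_1-\mu_2$. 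The inequality $\|\lambda_1+\mu_1-(\lambda_2+\mu_2)\|_\tau\le C\tau^{-1}\|\mu_1-\mu_2\|_\tau$ then comes from a Carleman estimate for the \emph{first-order} operator $\theta\cdot\partial_x$ with the convex weight $e^{\tau|x|^2}$, applied to this system and averaged over $\theta$; each application of $(\theta\cdot\partial_x)^{-1}$ in the weighted norm costs a factor $\tau^{-1}$. Your Step~3 mechanism (shear-polarised amplitudes making $\ddiv u=\mathcal{O}(\tau^{-1})$) is not how the gain arises, and trying to build CGOs with phase $\tau|x|^2+i\tau x\cdot\xi$ for the Lam\'e system would face exactly the obstacle you flag without the tool (the Ikehata reduction plus linear-phase $\bar\partial$-CGOs) that actually resolves it.
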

Moreover, we have  the following result, which corresponds to  \cite{EsRa}*{Theorem 3}.
\begin{theorem}\label{thm:main2}
    Let  $b_j = \mu_j (\lambda_j + \mu_j)(2\lambda_j + 4 \mu_j)^{-1}$, $j=1,2$, and 
    \begin{align*}
        Z \coloneqq \{ \theta \in \CC^3\setminus\{0\} \colon \theta \cdot \theta = 0\}.
    \end{align*}
    If $\Lambda^{(1)} = \Lambda^{(2)}$, then we have for all $\theta \in Z$ that
    \begin{align*}
        b_1^{1/2} (\theta \cdot \partial_x)^2 (b_1^{-1/2}) -  b_1^{1/2} (\theta \cdot \partial_x)^2 (\mu_1^{-1}) =    b_2^{1/2} (\theta \cdot \partial_x)^2 (b_2^{-1/2}) -  b_2^{1/2} (\theta \cdot \partial_x)^2 (\mu_2^{-1}) . 
    \end{align*}
\end{theorem}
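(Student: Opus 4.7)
The strategy combines a reflection argument (to pass from partial to full data), an Ikehata-type reduction of the Lam\'e system to a matrix Schr\"odinger equation, and complex geometric optics (CGO) solutions as in Eskin--Ralston. First, invoking Assumption~1, the Lam\'e parameters extend smoothly across the flat portion $\Gamma_0 \subset \RR^2 \times \{0\}$ to a doubled domain $\Omega^*$ by even reflection in $x_3$. Any solution $u$ of the Lam\'e system satisfying simply supported conditions on $\Gamma_0$ can be reflected (with $u_3$ odd and $u_1,u_2$ even in $x_3$) to a smooth solution $u^*$ on $\Omega^*$, so that $\Lambda^{(1)} = \Lambda^{(2)}$ on $\Gamma_1$ upgrades to the agreement of the \DtN maps for the extended Lam\'e problems on all of $\partial\Omega^*$. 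This reduces Theorem~\ref{thm:main2} to a full-data inverse problem.

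A standard Alessandrini-type integration by parts then yields, for every pair of solutions $u^{(j)}$ of the two Lam\'e systems sharing the same Dirichlet trace,
\begin{equation*}
    \int_{\Omega^*}\!\!\left((\lambda_1-\lambda_2)\,(\ddiv u^{(1)})(\ddiv u^{(2)}) + 2(\mu_1-\mu_2)\,\varepsilon(u^{(1)}){:}\varepsilon(u^{(2)})\right) dx = 0,
\end{equation*}
where $\varepsilon$ is the symmetric gradient. Following Ikehata and Nakamura--Uhlmann, I would conjugate the Lam\'e system by appropriate matrix factors built from $b_j^{1/2}$ and $\mu_j$, converting it into an equivalent matrix Schr\"odinger system $(\Delta + V_j)w^{(j)} = 0$ modulo lower-order terms, whose leading-order potential involves the scalar quantities $b_j^{1/2}\Delta b_j^{-1/2}$ and $b_j^{1/2}\Delta \mu_j^{-1}$. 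The particular combination in the statement of the theorem is precisely what emerges on the diagonal of this reduced potential.

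The final step is to construct CGO solutions $w^{(j)}(x;h) = a^{(j)}(x)\, e^{i x\cdot \zeta_j / h} + r^{(j)}$ with $\zeta_j \in \CC^3$, $\zeta_j \cdot \zeta_j = 0$ and $\zeta_1 + \bar\zeta_2$ chosen so that oscillations cancel in the pairing, with the leading direction of $\zeta_1$ equal to $\theta \in Z$ up to a shift of order $h$. The transport equation at the leading order forces $a^{(j)}$ to be proportional to $b_j^{-1/2}$ times a polarisation vector, and inserting these solutions into the Alessandrini identity and taking $h \to 0$ isolates, as the principal symbol of the potential difference probed along the null direction $\theta$, precisely the identity
\[ b_j^{1/2}(\theta\cdot\partial_x)^2 b_j^{-1/2} \;-\; b_j^{1/2}(\theta\cdot\partial_x)^2 \mu_j^{-1}, \quad j=1,2, \]
which must agree. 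The main obstacle is the algebraic reduction of the three-dimensional Lam\'e system to a decoupled Schr\"odinger form (considerably more involved than Ikehata's two-dimensional scalar reduction), together with ensuring that the CGO amplitudes can be taken to respect the reflection symmetry from Assumption~1 so that the transport equations are solvable on all of $\Omega^*$; the careful bookkeeping of which derivatives of $b_j^{-1/2}$ and $\mu_j^{-1}$ survive in the $h\to 0$ limit is the most delicate step.
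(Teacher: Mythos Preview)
Your outline has the right ingredients (reflection, Ikehata-type reduction, CGO solutions), but the reflection step as you describe it does not work, and this is where the paper's argument differs essentially from yours.

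You claim that reflecting solutions satisfying the simply supported condition on $\Gamma_0$ ``upgrades $\Lambda^{(1)}=\Lambda^{(2)}$ on $\Gamma_1$ to agreement of the \DtN maps on all of $\partial\Omega^*$'', thereby reducing to a full-data problem. This is false. Reflection only sets up a bijection between solutions on $\Omega$ with the mixed boundary condition and solutions on the doubled domain $\Omega^{\times 2}$ that are \emph{odd} under the reflection $w\mapsto w^\vee$ (incidentally, the correct parity is $u_1,u_2$ odd and $u_3$ even, the reverse of what you wrote). Hence the Alessandrini identity $H_{\Omega^{\times 2}}(w_1,w_2)=0$ is available only for such antisymmetric $w_i$, not for arbitrary solutions. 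But CGO solutions built from $e^{i\zeta\cdot x}$ are never antisymmetric under $x_3\mapsto -x_3$ (unless $\zeta_3=0$, which is far too restrictive), so you cannot feed them directly into a full-data identity. Your remark about ``ensuring that the CGO amplitudes respect the reflection symmetry'' does not repair this: the obstruction is in the exponential phase, not the amplitude.

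What the paper actually proves (Proposition~\ref{prop:h-double}) is the weaker identity
\[
H_{\Omega^{\times 2}}(w_1,w_2)=H_{\Omega^{\times 2}}(w_1,w_2^\vee)
\]
for \emph{arbitrary} solutions $w_i$ on $\Omega^{\times 2}$, obtained by antisymmetrising and expanding. The crucial additional step (Lemma~\ref{lem:asymptotic-terms}) is that for CGO solutions with $\theta\in Z_0=\{\theta\in Z:\Im\theta_3=0\}$ and $\Re\theta_3\neq 0$, the reflected pairing $H_{\Omega^{\times 2}}(w_1,w_2^\vee)$ carries the oscillatory factor $e^{2i\tau(1+\rho)\Re\theta_3\, x_3}$ and hence is $\mathcal O(\tau^{-\infty})$ by repeated integration by parts. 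Only after this does one obtain $H_{\Omega^{\times 2}}(w_1,w_2)=\mathcal O(\tau^{-\infty})$ and proceed as in Eskin--Ralston. This asymptotic-vanishing argument, not a reduction to full data, is the mechanism that handles the partial-data restriction, and it is absent from your proposal.

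A secondary but nontrivial omission: the passage from the leading-order integral identity $H_2=0$ to the pointwise scalar identity of the theorem is not ``taking $h\to 0$''. It requires the $\bar\partial$-machinery of Eskin: one parametrises $Z_0$ by $t\in\CC\setminus\{0\}$, shows that the matrix $B'(x,\theta)$ built from $\Pi_\theta$ applied to the integrand is holomorphic and of controlled growth in $t$, applies Liouville's theorem to pin down its entries (Proposition~\ref{prop:b-prime}), and only then derives the algebraic system (Proposition~\ref{prop:main}) from which the stated identity follows. Your sketch collapses this into a single limit and would not, as written, produce a pointwise conclusion.
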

Note that Theorem~\ref{thm:main1} implies $(\mu_1,\lambda_1) = (\mu_2,\lambda_2)$ if we already know that either $\mu_1 = \mu_2$ or $\lambda_1 = \lambda_2$.
In the case of the full data problem and bounded domains, Theorem~\ref{thm:main1} and Theorem~\ref{thm:main2} are due to Eskin--Ralston~\cite{EsRa}.

In the case of the infinite cylinder, we additionally obtain a new result for the full data problem, that is $\Gamma_0 = \varnothing$:
\begin{theorem}\label{thm:inf-cyl}
	There exists a constant $\varepsilon > 0$ (depending only on $K$) such that, if $\| \nabla \mu_i \|_{C^2(K)} < \varepsilon$, then $\Lambda^{(1)} = \Lambda^{(2)}$ implies that $(\mu_1, \lambda_1) = (\mu_2, \lambda_2)$.
\end{theorem}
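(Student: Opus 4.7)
The plan is to apply both Theorem~\ref{thm:main1} and Theorem~\ref{thm:main2}, which continue to hold in the infinite cylinder setting under Assumptions~1 and~2, and to close the resulting system perturbatively using the smallness of $\nabla\mu_j$. By Assumption~2, the differences $\delta\mu \coloneqq \mu_1-\mu_2$ and $\delta\lambda \coloneqq \lambda_1-\lambda_2$ are compactly supported in $K$, so the weighted norms $\|\cdot\|_\tau$ below are automatically finite even though $\Omega$ is unbounded. Theorem~\ref{thm:main1} supplies the bound
\[
\|\delta\lambda+\delta\mu\|_\tau \le C\tau^{-1}\|\delta\mu\|_\tau, \qquad \tau \gg 0,
\]
which I will use at the end to transfer information from $\delta\lambda$ to $\delta\mu$.

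To exploit Theorem~\ref{thm:main2}, I would first observe that the rank-one tensors $\theta\otimes\theta$ with $\theta\in Z$ span the complex trace-free symmetric $3\times 3$ matrices, so the theorem is equivalent to the vanishing of the trace-free part of $b_1^{1/2}\nabla^2 f_1 - b_2^{1/2}\nabla^2 f_2$, where $f_j \coloneqq b_j^{-1/2}-\mu_j^{-1}$. Because $\|\nabla\mu_j\|_{C^2(K)}<\varepsilon$, every contribution coming from a spatial derivative of $\mu_j$ is bounded by $O(\varepsilon)$ in $C^0(K)$, and the identity reduces to a nonlinear second-order equation for $\beta \coloneqq \log(b_1/b_2)$ of the schematic form
\[
\bigl(\nabla^2\beta - \mathrm{Sym}(\nabla\beta\otimes\nabla\bar\beta)\bigr)_{\mathrm{TF}} = O(\varepsilon),
\]
with $\bar\beta \coloneqq \tfrac{1}{2}(\log b_1 + \log b_2)$ an explicit function of the data. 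Since $\beta$ and $\nabla\beta$ vanish on $\partial K$ (because $b_1 = b_2$ outside $K$), a Carleman-type estimate in the weight $e^{\tau|x|^2}$ should force $\beta\equiv 0$ on $K$ as soon as $\varepsilon$ is smaller than the ellipticity constants of the principal part, yielding $b_1=b_2$ throughout $\overline\Omega$.

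Next I would combine $b_1=b_2$ with the bound from Theorem~\ref{thm:main1}. The algebraic inversion
\[
\mu = \frac{2BS}{S-2B},\qquad B = b_j,\; S = \lambda_j+\mu_j,
\]
shows that the pair $(B,S)$ uniquely determines $\mu$, since $S-2B = \lambda+\mu > 0$ by the Lam\'e positivity assumption (using that $3\lambda+2\mu>0$ and $\mu>0$ imply $\lambda+\mu>0$). Combined with $\|\delta\lambda+\delta\mu\|_\tau\le C\tau^{-1}\|\delta\mu\|_\tau$, this gives a self-improving bound of the form $\|\delta\mu\|_\tau\le C'\tau^{-1}\|\delta\mu\|_\tau$; letting $\tau\to\infty$ I conclude $\delta\mu\equiv 0$, and then Theorem~\ref{thm:main1} applied once more yields $\delta\lambda\equiv 0$.

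The main obstacle lies in the elliptic closure step: one must verify that, after gathering all terms involving $\nabla\mu_j$, $\nabla^2\mu_j$ and $\nabla^3\mu_j$ in the expansion of $b_j^{1/2}\nabla^2 f_j$, each of them really carries a genuine smallness factor $\varepsilon$, and that the linearised operator acting on $\beta$ inherits enough ellipticity on $H^2_0(K)$ for the Carleman argument to absorb the $O(\varepsilon)$ error. This is where the threshold $\varepsilon$ in the statement must be chosen small in terms of $K$ and of the common ambient Lam\'e parameters, and where the hypothesis $\|\nabla\mu_j\|_{C^2(K)}<\varepsilon$ is indispensable.
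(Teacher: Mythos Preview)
Your approach differs substantially from the paper's. The paper proves Theorem~\ref{thm:inf-cyl} by first establishing Theorem~\ref{th:h-K_cyl}, which shows (via a Runge approximation argument on the cylinder) that $\Lambda^{(1)}=\Lambda^{(2)}$ forces $H_K(u_1,u_2)=0$ for all local solutions of \eqref{eq:L-k-K} on the bounded set $K$. This is exactly the input that Eskin--Ralston's full-data argument needs, so the paper simply cites \cite{EsRa}*{Theorem~1} on $K$ and is done.

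Your route --- deducing uniqueness from the bare statements of Theorem~\ref{thm:main1} and Theorem~\ref{thm:main2} --- has a genuine gap at the step where you conclude $b_1=b_2$. Writing $\beta=\log(b_1/b_2)$, the identity of Theorem~\ref{thm:main2} does give
\[
(\theta\cdot\partial)^2\beta - (\theta\cdot\partial\beta)(\theta\cdot\partial\bar\beta) = -2E,\qquad
E = b_1^{1/2}(\theta\cdot\partial)^2\mu_1^{-1} - b_2^{1/2}(\theta\cdot\partial)^2\mu_2^{-1},
\]
and $|E|\le C\varepsilon$ pointwise. But $E$ is \emph{not} of the form $(\text{coefficient})\cdot\beta$ or $(\text{coefficient})\cdot\nabla\beta$: after splitting off the part proportional to $b_1^{1/2}-b_2^{1/2}$, there remains $-b_2^{1/2}\mu_2^{-2}(\theta\cdot\partial)^2\delta\mu$, which is only pointwise $O(\varepsilon)$ and satisfies $\|E\|_\tau\le C\varepsilon\|\mathbf 1_K\|_\tau$ rather than $\|E\|_\tau\le C\varepsilon\|\beta\|_\tau$. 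A Carleman inequality of the type $\tau^{3/2}\|\beta\|_\tau\le C\|(\nabla^2\beta)_{\mathrm{TF}}\|_\tau$ then yields at best $\|\beta\|_\tau\le C\varepsilon\tau^{-3/2}\|\mathbf 1_K\|_\tau$, from which $\beta\equiv 0$ does \emph{not} follow by letting $\tau\to\infty$ (both sides blow up like $e^{\tau R_K^2}$). Carleman estimates produce uniqueness only when the inhomogeneity is absorbable into the unknown; a free $O(\varepsilon)$ source supported in $K$ merely bounds $\beta$, it does not annihilate it.

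This is precisely why Eskin--Ralston's proof of their Theorem~1 does not attempt to decouple and first show $b_1=b_2$: it works instead with the full coupled system of Proposition~\ref{prop:main} together with the estimate of Theorem~\ref{thm:main1}, arranged so that every $\varepsilon$-small coefficient multiplies one of the unknowns $(b_{12},b_{21},b_{22},\delta\mu)$ rather than standing alone. Your final algebraic step (once $b_1=b_2$ is known, the inversion $\mu=2BS/(S-2B)$ combined with Theorem~\ref{thm:main1} gives $\|\delta\mu\|_\tau\le C'\tau^{-1}\|\delta\mu\|_\tau$ and hence $\delta\mu=0$) is correct, but its hypothesis $b_1=b_2$ is not obtainable from Theorem~\ref{thm:main2} alone.
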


\subsection*{History}
This problem is related to the inverse problem of electrical impedence tomography (EIT), where one considers the equation $\nabla \cdot (q\nabla u) = 0$ with bounded and positive potential $q$ on a bounded domain $\Omega$ with sufficiently regular boundary $\Gamma = \partial \Omega$.
Here, the \DtN operator $\Lambda_q$ is viewed as the ``voltage-to-current'' map.

In the pioneering works of Calder\'on~\cite{Calderon} and Sylvester--Uhlmann~\cite{SylvesterUhl}, the authors constructed so-called complex geometric optics solutions (we refer to the surveys \cites{UhlmannDev,UhlmannUnseen} for more details).
If two potentials $q_1, q_2$ satisfy $\Lambda_{q_1} = \Lambda_{q_2}$ on the boundary, then it was shown in \cite{SylvesterUhl} that $q_1 = q_2$.
There are also results for unbounded domains, for instance the slab was treated by Li--Uhlmann~\cite{LiUhl}.

In applications, it is usually not possible to measure $\Lambda$ on the whole boundary. Therefore, it is natural to ask whether it suffices to know equality of $\Lambda_{q_1}$ and $\Lambda_{q_2}$ on
a small part $\Gamma_0$ of the boundary $\Gamma$.
Kenig--Sj\"ostrand--Uhlmann~\cite{KSjU} showed that for the Schr\"odinger operator this is true if the \DtN operator is measured in $\Gamma_0$ and $\Gamma$ is ``convex enough'' with respect to $\Gamma_0$ (again we refer to \cite{UhlmannUnseen} and references therein).
This assumption can be relaxed if one strenghens the assumptions on the $\Gamma_0$: Isakov~\cite{Isakov} proved that if $\Gamma_0$ is part of a plane, then a reflection argument proves uniqueness.

In the case of the elasticity operator $L_j$ with Lam\'e parameters $\lambda_j$ and $\mu_j$, the problem is considerably more involved and even global uniqueness with full data is still open.
Under an a-priori smallness assumption on the parameters $\mu_j$, the inverse problem was solved by Nakamura--Uhlmann~\cites{NakUhl,NakUhl-err,NakUhl02} and Eskin--Ralston~\cite{EsRa}.
The principal symbol of the elasticity operator is not equal to the principal symbol of the matrix Laplacian, thus one cannot directly apply the method of complex geometric optics solutions.
In \cite{EsRa}, the authors considered an auxiliary equation, from which solutions of the elasticity system could be deduced (see also \cite{NakUhl02}).
To construct complex geometric optics solutions for the auxiliary equation one has to solve a $\bar\partial$-problem.

For the partial data problem for the $3$-dimensional elasticity-operator, we are only aware of the results of Imanuvilov--Uhlmann--Yamamoto~\cite{IUY}, which operated under the assumption that $\mu_1$ and $\mu_2$ are constant and
$\lambda_1 = \lambda_2$ on $\Gamma_0$.

The paper is structured as follows:
We recall the definition of the Lam\'e system and define the \DtN operator for bounded domains in Section~\ref{sec:dtn}.
Section~\ref{sec:construction} is concerned with the construction of complex geometric optics solutions.
In Section~\ref{sec:proof-bdd} we parametrize the complex geometric optics solutions to prove the main results in the case of bounded domains.
We prove the main theorems for the case of the infinite cylinder in Section~\ref{sec:cylinder} by reducing them to the previous case of bounded domains.
The paper ends with an appendix on the appropiate radiation conditions for the elasticity operator on the infinite cylinder.

\section{The Lam\'e System and the \DtN Operator}\label{sec:dtn}
We denote by $u : \Omega \to \CC^3$ the displacement field of the elastic material. The strain tensor of $u$ is given by the matrix 
\begin{equation}\label{def:strain}
  	\varepsilon(u) \coloneqq \frac{1}{2} ( \partial_j u_k + \partial_k u_j)_{j,k} = \frac12 \left( \nabla u + (\nabla u )^T \right), 
\end{equation}
where $\nabla u$ denotes the Jacobian of $u$, and the stress is defined by the following expression
\begin{equation}\label{def:stress}
  	\sigma^{(i)}(u) \coloneqq 2 \mu_i   \varepsilon(u)  + \lambda_i \ddiv (u) \id_3 , \quad i = 1,2 . 
\end{equation}
Here, $\id_3$ is the three-dimensional unit matrix. Then the corresponding Lam\'e operator reads as 
\begin{equation} 
	\label{eq:defLi}
	L^{(i)}  u \coloneqq  \nabla \cdot   \sigma^{(i)}(u)   , 
\end{equation}
where we put $(\nabla \cdot A)_j = \sum_k \pa_k A_{jk}$ for a matrix-valued function $A =(A_{jk})_{jk} \in  C^1(\Omega, \CC^{3 \times 3})$.

To obtain a well-posed problem, we have to impose suitable boundary conditions.
The outward boundary forces are given by 
\[ B^{(i)} u \coloneqq \sigma^{(i)}(u) \normal , \]
where $\normal$ is the outward unit normal vector at the boundary $\Gamma$.  Moreover, we define by
\begin{align*}
	B^{(i)}_{n} u \coloneqq \scal{\normal}{ B^{(i)} u}  \normal   , \qquad B^{(i)}_{\tau } u \coloneqq B^{(i)} u - B^{(i)}_{n} u ,
\end{align*}
the corresponding projection in the normal direction respectively  tangential direction. Likewise we define for the  displacement field $u$: 
\begin{align*}
	u_{n} \coloneqq \scal{\normal}{u} \normal   , \qquad u_{\tau } \coloneqq u - u_{ n}  . 
\end{align*}
Then we will distinguish between different operators on the boundary: in addition to   $u = (u_n, u_\tau)^T$ and $B^{(i)} u = (B^{(i)}_n u, B^{(i)}_\tau u)^T$,  we define
\begin{align*}
	C^{(i)} u \coloneqq \begin{pmatrix} B^{(i)}_n u \\  u_\tau  \end{pmatrix}. 
\end{align*}
The boundary condition $C^{(i)}u = 0$ is the \emph{simply supported boundary condition}.
\begin{remark}\label{rem:clamped}
    The boundary conditions
    \begin{align*}
        \qquad D^{(i)} u \coloneqq \begin{pmatrix} u_n \\ B^{(i)}_\tau u \end{pmatrix} = 0
    \end{align*}
    are called \emph{soft clamped boundary conditions}. We note that our theorems are still valid if we replace $C^{(i)}u = 0$ by $D^{(i)}u = 0$ on $\Gamma_0$.
\end{remark}

We recall the definition of the \DtN operator in the case that $\Omega$ is bounded. 
The definition of the \DtN operator for infinite cylinders is given in Section~\ref{sec:cylinder}.
We consider the boundary value problem 
\begin{equation}\label{eq:bvp_Poisson_neu} \left\{ 
\begin{aligned}
	L^{(i)}  u  &= 0  \text{ in } \Omega\\
	u &= g  \text{ on } \Gamma_1\\
	C^{(i)} u &= 0  \text{ on } \Gamma_0,
\end{aligned} \right. 
\end{equation}
where assumptions on $g : \Gamma_1 \to \CC^3$ will arise later. 
Using Green's formula, we obtain
\begin{align}\label{eq:Greens_formula}
	\int_\Omega \scal{\sigma^{(i)} (u)}{\varepsilon (v) }_{\CC^3 \times \CC^3} \; \dd x  &= \int_{\Omega} L^{(i)} u\, \overline{v} \, \dd x +
	\int_{\Gamma} B^{(i)} u\, \overline{v} \; \dd s ,
\end{align}
where $\scal{\cdot}{\cdot}_{\CC^3\times\CC^3}$ denotes the standard sesquilinear product on $\CC^3$.
We observe that the equation \eqref{eq:bvp_Poisson_neu} may be rewritten as follows:
if $u$ is a solution of \eqref{eq:bvp_Poisson_neu}, then
for all $v \in H^{1}(\Omega;\CC^3)$ with $v = 0$ on $ \Gamma_1$ and $v_\tau =0$ on $\Gamma_0$, we have that
\begin{equation}\label{eq:bvp_Poisson_weak}
	\int_\Omega \scal{\sigma^{(i)} (u)}{\varepsilon (v) }_{\CC^3 \times \CC^3} \; \dd x  = 0.
\end{equation}
For the sake of completeness we will consider the well-posedness of the boundary value problem \eqref{eq:bvp_Poisson_neu} and recall the corresponding definition of the Sobolev spaces.  
Let  $T\Gamma$  be the tangent bundle of $\Gamma$. By means of local charts we may define for $s \ge 0$ the corresponding Sobolev spaces $H^s(\Gamma; T \Gamma)$ and by duality the spaces $H^{-s} (\Gamma; T \Gamma) \coloneqq H^s(\Gamma; T \Gamma)^*$. 
For a relatively open subset $U\subseteq \Gamma$ and $s \ge 0$ we put  
\begin{align}\label{def:Hs}
    H^s(U; TU) &\coloneqq \{ G|_U :  G \in   H^s(\Gamma;T\Gamma) \} ,\\ 
    H_{00}^s(U; TU) &\coloneqq \{ g \in H^s(\Gamma;T \Gamma) :  \supp(g) \subseteq \overline{U}  \} , \label{def:Hs_00}
\end{align}
as well as 
\begin{align*}
    H^{-s}(U; TU) &\coloneqq H_{00}^s(U; TU)^* , \quad \text{and} \quad H_{00}^{-s}(U; TU) \coloneqq 
    H^s(U; TU)^* .  
\end{align*}
Note that that Formulae \eqref{def:Hs} and \eqref{def:Hs_00} hold for $s < 0$, after the correct interpretation of the right-hand sides, cf. \cite{McLean}*{Theorem 3.30}.
In a similar way, we may define $H^s(U;T^\perp U)$ and $H^s_{00}(U;T^\perp U)$, where $T^\perp U$ is the normal bundle, as well as  $H^s(U;\CC^3) , H^s_{00}(U;\CC^3)$.
Note that
for arbitrary $s \in \RR$,
\begin{align*}
    H^s(U;\CC^3) &= H^s( U; T^\perp U) \oplus H^s(U;TU)  , \\
    H^s_{00} (U;\CC^3) &= H^s_{00}(U;T^\perp U) \oplus H^s_{00} ( U; T U).
\end{align*}
Considering again the boundary value problem \eqref{eq:bvp_Poisson_neu} the Fredholm property reads as follows: 
\begin{lemma}
	Let $g  \in H^{1/2} (\Gamma_1;T^\perp \Gamma_1) \oplus H^{1/2}_{00}(\Gamma_1;T\Gamma_1)$.
    Then the boundary value problem \eqref{eq:bvp_Poisson_neu} is uniquely solvable if and only if corresponding homogeneous problem is uniquely solvable.
\end{lemma}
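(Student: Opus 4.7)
The plan is to recast \eqref{eq:bvp_Poisson_neu} as a coercive-plus-compact operator equation on a Hilbert space and invoke the Fredholm alternative. Define the closed subspace
\[ V \coloneqq \{v \in H^1(\Omega;\CC^3) : v|_{\Gamma_1} = 0,\ v_\tau|_{\Gamma_0} = 0\}. \]

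First, I would reduce the problem to homogeneous Dirichlet data on $\Gamma_1$. The hypothesis $g \in H^{1/2}(\Gamma_1;T^\perp \Gamma_1) \oplus H^{1/2}_{00}(\Gamma_1;T\Gamma_1)$ is exactly what is needed to produce a lift $u_0 \in H^1(\Omega;\CC^3)$ satisfying $u_0|_{\Gamma_1} = g$ and $u_{0,\tau}|_{\Gamma_0} = 0$: the $H^{1/2}_{00}$ condition on the tangential part ensures that its extension by zero across $\partial\Gamma_1$ into $\Gamma_0$ lies in $H^{1/2}(\Gamma;T\Gamma)$, whereas the normal part can be extended arbitrarily, since no essential constraint is imposed on the normal direction on $\Gamma_0$. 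After subtracting $u_0$, solving \eqref{eq:bvp_Poisson_neu} is equivalent, via \eqref{eq:bvp_Poisson_weak}, to finding $w \in V$ with
\[ a(w,v) = -a(u_0,v) \quad\text{for all } v\in V, \qquad a(u,v) \coloneqq \int_\Omega \scal{\sigma^{(i)}(u)}{\varepsilon(v)}_{\CC^3\times\CC^3}\, \dd x. \]

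Next, I would establish coercivity of $a$ modulo a compact term. The Lam\'e positivity \eqref{eq:lameass} yields the pointwise ellipticity of the stress-strain pairing, giving $\Re a(u,u) \geq c\,\|\varepsilon(u)\|_{L^2}^2$, and Korn's second inequality on the Lipschitz domain $\Omega$ provides $\|u\|_{H^1}^2 \leq C(\|\varepsilon(u)\|_{L^2}^2 + \|u\|_{L^2}^2)$. Hence, for $K > 0$ sufficiently large, the shifted form
\[ \tilde{a}(u,v) \coloneqq a(u,v) + K \scal{u}{v}_{L^2} \]
is bounded and coercive on $V$, and Lax--Milgram produces an isomorphism $\tilde{A} \colon V \to V^*$ of the associated operator.

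Finally, the Fredholm alternative closes the argument: since the Rellich embedding $V \hookrightarrow L^2(\Omega;\CC^3)$ is compact, the operator $A \colon V \to V^*$ induced by $a$ differs from $\tilde{A}$ by a compact operator, and is therefore Fredholm of index zero. For such an operator, triviality of the kernel is equivalent to invertibility, which is precisely the claimed equivalence between unique solvability of \eqref{eq:bvp_Poisson_neu} and of the corresponding homogeneous problem. The main obstacle I anticipate is the verification of Korn's second inequality in the present mixed-boundary setting with $\Gamma_0$ only Lipschitz at its edge; this, however, is the classical Korn estimate on Lipschitz domains, and the mixed boundary conditions enter only through the definition of the test space $V$, which does not affect the inequality itself.
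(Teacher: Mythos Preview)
Your proposal is correct and follows exactly the standard Fredholm-alternative argument (lift, G{\aa}rding via Korn plus Lam\'e positivity, Rellich compactness) that underlies \cite{McLean}*{Theorem 4.11}, which is all the paper invokes for this lemma. You have in fact supplied the details the paper omits; your remark that the Lipschitz regularity of $\partial\Gamma_0$ affects only the trace/lift step and not Korn's inequality on $\Omega$ itself is also on point.
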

The proof is similar to the proof of \cite{McLean}*{Theorem 4.11}.
Moreover, a simple calculation shows that \eqref{eq:bvp_Poisson_neu} with $g = 0$ is uniquely solvable. Hence,  \eqref{eq:bvp_Poisson_neu} has a unique solution $u \in H^1(\Omega;\CC^3)$.
Next we define  the corresponding \DtN operator.
We observe that if $u \in H^1(\Omega;\CC^3)$ with $L^{(i)} u = 0$ in $\Omega$, then we may define $B^{(i)} u \in H^{-1/2}(\Gamma ;\CC^3)$ by Formula \eqref{eq:Greens_formula}, i.e. we have for all $v \in H^1(\Omega;\CC^3)$ that
\begin{align*}
    \scal{B^{(i)} u}{v|_{\Gamma}} = \int_\Omega \scal{\sigma^{(i)}  (u)}{\varepsilon (v) }_{\CC^3 \times \CC^3} \; \dd x.
\end{align*}
Finally, we set  
\begin{align*}
    \Lambda^{(i)}  g  \coloneqq B^{(i)} u |_{\Gamma_1}  ,
\end{align*}
where $u \in H^1(\Omega;\CC^3)$ solves \eqref{eq:bvp_Poisson_neu}.
From the construction it is clear that the \DtN operator $\Lambda^{(i)}$ has the following mapping properties 
\[ \Lambda^{(i)} : H^{1/2} (\Gamma_1;T^\perp \Gamma_1) \oplus H^{1/2}_{00}(\Gamma_1;T\Gamma_1)  \to H^{-1/2}_{00} (\Gamma_1;T^\perp\Gamma_1) \oplus H^{-1/2}(\Gamma_1;T\Gamma_1) . \]

As a next step we consider on $H^1(\Omega,\CC^3)$ the sesquilinear form
\begin{equation*}
H_\Omega(u_1 ,u_2 )  \coloneqq \int_\Omega \{ \lambda_2 - \lambda_1 \} \ddiv (u_1) \overline{\ddiv(u_2)} 
+ 2\{\mu_2 - \mu_1\}  \scal{\varepsilon (u_1)}{\varepsilon(u_2)}_{\CC^3 \times \CC^3} \; \dd x , 
\end{equation*}
which will be crucial in what follows. In the case that the Dirichlet-to-Neumann maps are equal we obtain the following result. 
\begin{lemma}\label{lem:h-omega}
	If $\Lambda^{(1)} = \Lambda^{(2)}$, then we have $H_\Omega(u_1,u_2) = 0$ for all $u_1, u_2 \in H^1(\Omega;\CC^3)$ that satisfy
	\begin{equation*}\left\{ 
		\begin{aligned}
		L^{(i)}  u_i  &= 0  \quad\text{ in } \Omega\\
		C^{(i)} u_i &= 0  \quad\text{ on } \Gamma_0.
	\end{aligned} \right. 
	\end{equation*}
\end{lemma}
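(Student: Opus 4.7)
The plan is to rewrite $H_\Omega(u_1,u_2)$ as a difference of two stress--strain pairings and then to use Green's formula \eqref{eq:Greens_formula} to convert each pairing into a boundary integral on $\Gamma_1$ that can be compared via the hypothesis $\Lambda^{(1)} = \Lambda^{(2)}$. Expanding the definition of $\sigma^{(i)}$ shows that the integrand of $H_\Omega(u_1,u_2)$ is precisely $\scal{\sigma^{(2)}(u_1)}{\varepsilon(u_2)}_{\CC^3 \times \CC^3} - \scal{\sigma^{(1)}(u_1)}{\varepsilon(u_2)}_{\CC^3 \times \CC^3}$, so it suffices to show that these two volume integrals agree.

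Writing $g_i \coloneqq u_i|_{\Gamma_1}$, I would first apply Green's formula to the second pairing. Since $L^{(1)} u_1 = 0$, it reduces to $\int_\Gamma B^{(1)} u_1 \cdot \overline{u_2} \, \dd s$. The key observation is that the $\Gamma_0$-contribution vanishes: decomposing $B^{(1)}u_1 \cdot \overline{u_2}$ into normal and tangential parts, the normal part is killed by $C^{(1)}u_1 = 0$ and the tangential part by $C^{(2)}u_2 = 0$. Hence this integral equals $\int_{\Gamma_1} \Lambda^{(1)} g_1 \cdot \overline{g_2} \, \dd s$.

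Next I would introduce an auxiliary field $\tilde u_1 \in H^1(\Omega;\CC^3)$ solving $L^{(2)}\tilde u_1 = 0$ with $\tilde u_1 = g_1$ on $\Gamma_1$ and $C^{(2)}\tilde u_1 = 0$ on $\Gamma_0$, which exists by the Fredholm lemma already stated. The identical Green-formula argument gives $\int_\Omega \scal{\sigma^{(2)}(\tilde u_1)}{\varepsilon(u_2)}_{\CC^3\times\CC^3} \, \dd x = \int_{\Gamma_1} \Lambda^{(2)} g_1 \cdot \overline{g_2} \, \dd s$, and the hypothesis $\Lambda^{(1)} = \Lambda^{(2)}$ makes this coincide with the previous boundary integral. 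Substituting yields
\begin{equation*}
  H_\Omega(u_1,u_2) = \int_\Omega \scal{\sigma^{(2)}(u_1 - \tilde u_1)}{\varepsilon(u_2)}_{\CC^3\times\CC^3} \, \dd x.
\end{equation*}

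To finish, I would apply Green's formula one last time with the arguments swapped, now using $L^{(2)} u_2 = 0$, to rewrite $\int_\Omega \scal{\sigma^{(2)}(u_2)}{\varepsilon(u_1-\tilde u_1)}_{\CC^3\times\CC^3} \, \dd x$ as the boundary integral of $B^{(2)}u_2 \cdot \overline{(u_1 - \tilde u_1)}$. This vanishes on $\Gamma_1$ because $u_1 - \tilde u_1 = g_1 - g_1 = 0$ there, and it vanishes on $\Gamma_0$ by the same normal/tangential orthogonality (the normal component of $B^{(2)}u_2$ dies by $C^{(2)}u_2 = 0$, and the tangential component of $u_1 - \tilde u_1$ dies since $C^{(1)}u_1 = 0$ and $C^{(2)}\tilde u_1 = 0$). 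A complex conjugation of this identity then gives $H_\Omega(u_1,u_2) = 0$. The argument is essentially bookkeeping; the only conceptual point is recognizing that the simply supported condition $C^{(i)}u_i = 0$ is precisely tailored so that every $\Gamma_0$-contribution in the Green identities cancels, so the entire comparison is driven by the boundary values on $\Gamma_1$ and the equality of the two \DtN maps.
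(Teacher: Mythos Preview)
Your proof is correct and follows essentially the same route as the paper: both rewrite $H_\Omega(u_1,u_2)$ as $\int_\Omega \scal{\sigma^{(2)}(u_1)-\sigma^{(1)}(u_1)}{\varepsilon(u_2)}\,\dd x$, introduce the auxiliary $L^{(2)}$-solution with the same $\Gamma_1$-trace as $u_1$, and use Green's formula together with the $\Gamma_0$-cancellations furnished by the simply supported conditions to reduce everything to a comparison of $\Lambda^{(1)}g_1$ and $\Lambda^{(2)}g_1$ on $\Gamma_1$. The paper phrases the final vanishing step as an application of the variational formulation~\eqref{eq:bvp_Poisson_weak} rather than spelling out the swapped Green identity and its conjugation, but the content is identical.
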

\begin{proof}
Using the previous notations we have
\begin{align*}
    H_{\Omega}(u_1,u_2) = \int_{\Omega} \scal{\sigma_2(u_1) - \sigma_1(u_1)}{\varepsilon(u_2)} \; \dd x. 
\end{align*}
Let $u_1,u_2$ be given as above. Then we choose  $v \in H^1(\Omega;\CC^3)$ such that 
\begin{equation*}\left\{ 
	\begin{aligned}
	L^{(2)}  v  &= 0  &\text{ in }& \Omega\\
 	v &= u_1  &\text{ on }& \Gamma_1\\
	C^{(2)} v &= 0  &\text{ on }& \Gamma_0.
	\end{aligned} \right.
\end{equation*}
As $u_1 - v = 0$ on $ \Gamma_1$ and $(u_1 - v)_\tau =0$ on $\Gamma_0$  we obtain from the variational formulation that 
\begin{align*}
    0 &= \int_{\Omega} \scal{\sigma_2 (u_1 - v)  }{\varepsilon (u_2)}_{\CC^3 \times \CC^3} \; \dd x .
\end{align*}
Now $\Lambda^{(1)} = \Lambda^{(2)}$ implies 
\begin{align*}
    \int_{\Omega} \scal{\sigma_2 (v)  }{\varepsilon (u_2)}_{\CC^3 \times \CC^3} \; \dd x 
    &=\scal{B^{(2)} v}{u_2|_{\Gamma}} \\
    &=\scal{B^{(1)} u_1}{u_2|_{\Gamma}} \\
    &= \int_{\Omega} \scal{\sigma_1 (u_1)  }{\varepsilon (u_2)}_{\CC^3 \times \CC^3} \; \dd x ,
\end{align*}
which proves the assertion.
\end{proof}
\section{Construction of Complex Geometric Optics Solutions}\label{sec:construction}
In what follows we will construct suitable solutions   $u_i$ of $L^{(i)}  u_i = 0$ in $\Omega$ and $C^{(i)} u_i = 0$ on $\Gamma_0$. The method   is well-known for $\Gamma_0 = \varnothing$ however in the case of partial data, the main difficulty relies on the additional boundary condition. To this end we use the ideas in \cite{Isakov} and use a  reflection argument along  the axis $x_3=0$. 

For $x = (x_1,x_2,x_3)$ we put $x^\vee \coloneqq (x_1,x_2,-x_3)$.
By the assumption on the Lam\'e coefficients, we may extend the functions $\mu_i$ and $\lambda_i$ evenly to $\Omega^{\times 2} \coloneqq \Omega \cup \Gamma_0 \cup \Omega^\vee$,
where 
\[\Omega^\vee  \coloneqq \{ x \in \RR^3 : x^\vee \in \Omega \}.\]
Moreover, we define for $w \in L^2(\Omega^{\times 2};\CC^3)$ the function
\begin{align*}
	w^\vee(x) \coloneqq w(x^\vee)^\vee . 
\end{align*}
\begin{proposition}\label{prop:h-double}
    Assume that $\Lambda^{(1)} = \Lambda^{(2)}$.
    If $w_i \in H^1(\Omega^{\times2};\CC^3)$ are solutions of $L^{(i)} w_i = 0$, then
    \begin{align*}
        H_{\Omega^{\times 2}}(w_1, w_2) =  H_{\Omega^{\times2}}(w_{1}, w_{2}^\vee).
    \end{align*}
\end{proposition}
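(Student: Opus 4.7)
The plan is to reduce the identity on $\Omega^{\times 2}$ to one on $\Omega$ and invoke Lemma~\ref{lem:h-omega}. First I would split
\[H_{\Omega^{\times 2}}(w_1, w_2) = H_\Omega(w_1, w_2) + H_{\Omega^\vee}(w_1, w_2)\]
and perform the change of variables $x = y^\vee$ in the second summand. Using the evenness of $\lambda_i, \mu_i$ in $x_3$ together with the chain-rule identities
\[\ddiv(a)(y^\vee) = \ddiv(a^\vee)(y), \qquad \varepsilon(a)(y^\vee) = R\,\varepsilon(a^\vee)(y)\,R,\]
where $R = \diag(1,1,-1)$, one checks that the integrand is preserved (conjugation by $R$ leaves the Frobenius inner product invariant), so that $H_{\Omega^\vee}(a, b) = H_\Omega(a^\vee, b^\vee)$. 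Expanding both $H_{\Omega^{\times 2}}(w_1, w_2)$ and $H_{\Omega^{\times 2}}(w_1, w_2^\vee)$ in this way, and using $(w_2^\vee)^\vee = w_2$, the cross terms match and the claim reduces to
\[H_\Omega(w_1 - w_1^\vee, w_2 - w_2^\vee) = 0.\]

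Next I would verify that $u_i \coloneqq (w_i - w_i^\vee)|_\Omega$ meets the hypotheses of Lemma~\ref{lem:h-omega}. Because $\lambda_i$ and $\mu_i$ are even in $x_3$, the Lam\'e operator $L^{(i)}$ commutes with the twisted reflection $w \mapsto w^\vee$, so that $L^{(i)} u_i = 0$ on $\Omega$. The vanishing of the tangential trace $(u_i)_\tau$ on $\Gamma_0$ is immediate since $x = x^\vee$ there. For the normal-stress condition $B^{(i)}_n u_i = 0$ on $\Gamma_0$, I would use $(u_i)_1 = (u_i)_2 = 0$ to reduce $\ddiv u_i$ to $\partial_3 (u_i)_3$ on $\Gamma_0$; a short chain-rule computation giving $\partial_3 (w_i^\vee)_3 = \partial_3 (w_i)_3$ on $\Gamma_0$ then yields $\partial_3 (u_i)_3 = 0$, and hence $B^{(i)}_n u_i = 0$. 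Applying Lemma~\ref{lem:h-omega} delivers $H_\Omega(u_1, u_2) = 0$, which is the required identity.

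The main obstacle is the reflection bookkeeping: establishing invariance of the strain inner product under the twisted reflection and verifying that $w - w^\vee$ satisfies the \emph{full} simply supported boundary condition (the normal-stress part is not automatic and requires the explicit chain-rule computation above). Once these sign-tracking steps are in hand, the rest of the argument is essentially algebraic.
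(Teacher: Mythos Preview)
Your approach is essentially the paper's own proof: the paper also sets $u_i = (w_i - w_i^\vee)|_\Omega$, invokes Lemma~\ref{lem:h-omega} to obtain $H_\Omega(u_1,u_2)=0$, and then unfolds this via the same reflection identities (recorded as \eqref{eq:reflect-ops} in Lemma~\ref{lem:extension}) to reach the conclusion. One caveat: your pointwise verification of $B^{(i)}_n u_i = 0$ on $\Gamma_0$ via $\partial_3(u_i)_3|_{\Gamma_0}=0$ presupposes enough regularity to take that trace, whereas the proposition only assumes $w_i \in H^1$; the paper therefore handles the boundary condition in the variational sense (approximating test functions by odd reflections of smooth compactly supported fields), and you would need to do the same to make your argument complete at the stated regularity.
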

For the proof we use the following lemma. For the sake of clarity we simply write $L^{(i)} = L$, $C^{(i)} = C$ and $\mu_i = \mu$, $\lambda_i = \lambda$. 
\begin{lemma}\label{lem:extension}
	The following assertions hold true:
	\begin{enumerate}[(i)]
        \item \label{ext-1} If $w \in H^1(\Omega^{\times2};\CC^3)$ satisfies $L w = 0$ in $\Omega^{\times 2}$, then
        $v = w^\vee$ satisfies $L v = 0$ in $\Omega^{\times 2}$.
 		\item If $w \in H^1(\Omega^{\times2};\CC^3)$ satisfies $L w = 0$ in $\Omega^{\times 2}$ and the symmetry condition 
 		\[w = - w^\vee ,\]
 		then the restriction $u = w|_{\Omega}$ satisfies
 		\begin{align*}
			L u =0 \text{ in } \Omega, \qquad C u = 0 \text{ on } \Gamma_0. 
 		\end{align*}
	\end{enumerate}
\end{lemma}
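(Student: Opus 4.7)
The plan is to encode the reflection symmetry via the matrix $R = \diag(1,1,-1)$, so that $x^\vee = Rx$ and $w^\vee(x) = Rw(Rx)$, and then exploit that the Assumption guarantees $\tilde\mu, \tilde\lambda$ are smooth and even in $x_3$ on $\Omega^{\times 2}$ (equivalently $\mu(Rx) = \mu(x)$, $\lambda(Rx) = \lambda(x)$). Throughout I will use $R^T = R$ and $R^2 = \id_3$.

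For part~(i), I would write $v(x) = Rw(Rx)$ and compute the transformation rules: by the chain rule $\nabla v(x) = R (\nabla w)(Rx) R$, hence $\varepsilon(v)(x) = R \varepsilon(w)(Rx) R$ and $\ddiv(v)(x) = \operatorname{tr}(\nabla v(x)) = (\ddiv w)(Rx)$. Using evenness of the Lam\'e coefficients, this gives
\[
\sigma(v)(x) = R\bigl[2\mu(Rx)\varepsilon(w)(Rx) + \lambda(Rx)\ddiv(w)(Rx)\id_3\bigr]R = R\,\sigma(w)(Rx)\,R.
\]
A straightforward index computation (again using $R^2 = \id_3$) then shows $\nabla \cdot (R A(R\,\cdot\,)R)(x) = R(\nabla \cdot A)(Rx)$ for any matrix field $A$, so $L v(x) = R (Lw)(Rx) = 0$.

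For part~(ii), the antisymmetry $w = -w^\vee$ reads componentwise as $w_j(x_1,x_2,x_3) = -w_j(x_1,x_2,-x_3)$ for $j=1,2$ and $w_3(x_1,x_2,x_3) = w_3(x_1,x_2,-x_3)$; that is, the tangential components are odd and the normal component is even in $x_3$. Since $\Gamma_0 \subseteq \RR^2 \times \{0\}$ with outward normal $\pm e_3$, the oddness immediately gives $u_\tau|_{\Gamma_0} = 0$. For the normal stress, one only needs $\scal{\normal}{\sigma(u)\normal} = \sigma(u)_{33} = 2\mu\,\partial_{x_3} u_3 + \lambda(\partial_{x_1} u_1 + \partial_{x_2} u_2 + \partial_{x_3} u_3)$; the tangential derivatives $\partial_{x_1} u_1$, $\partial_{x_2} u_2$ vanish on $\Gamma_0$ because $u_1, u_2$ vanish there, and $\partial_{x_3} u_3|_{x_3=0} = 0$ since $u_3$ is even in $x_3$. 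Hence $B_n^{(i)} u = 0$ on $\Gamma_0$, proving $Cu = 0$ there. The interior equation $Lu = 0$ on $\Omega$ is of course just the restriction of $Lw = 0$.

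The only genuine subtlety is justifying the calculations at the level of regularity provided ($H^1$): part~(i) needs the identity $Lv = R(Lw)(R\cdot)$ in the sense of distributions, which follows from the distributional chain rule together with the Assumption (so that $R$-pullback commutes with multiplication by the coefficients). Part~(ii), once one knows $Lw = 0$ across $\Gamma_0$ in $\Omega^{\times 2}$, is then essentially a trace/parity statement. I do not expect a real obstacle here; the content of the lemma is that the Assumption was made precisely so that these reflection manipulations are permitted.
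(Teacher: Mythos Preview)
Your argument for part~(i) is identical to the paper's (it writes $J$ for your $R$ and records the same transformation rules for $\ddiv$, $\varepsilon$, $\sigma$, then concludes $Lv(x) = J(Lw)(x^\vee)$).

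For part~(ii) the two approaches diverge. Your parity argument is exactly what the paper calls the ``easy case'' for $u \in H^2(\Omega;\CC^3)$: it writes out $Cu$ on $\Gamma_0$ as $(u_1,u_2,\lambda(\partial_1 u_1+\partial_2 u_2)+(\lambda+2\mu)\partial_3 u_3)^T$ and observes that each entry vanishes by oddness/evenness. The paper then treats the general $H^1$ case by a direct variational argument: it approximates an admissible test function $v$ by smooth $\phi_k$ with the right support, extends these antisymmetrically to $\tilde\phi_k \in C_c^\infty(\Omega^{\times 2};\CC^3)$, and uses $L w = 0$ on $\Omega^{\times 2}$ to kill $\int_\Omega \langle \sigma(u),\varepsilon(v)\rangle$. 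This bypasses any pointwise trace computation.

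Your route can be completed, but not quite by the hand-wave in your last paragraph. The point you are missing is that $\Gamma_0$ lies in the \emph{interior} of $\Omega^{\times 2}$, so interior elliptic regularity (smooth coefficients, $Lw=0$) makes $w$ smooth in a neighbourhood of $\Gamma_0$; your pointwise computations of $u_\tau$, $\partial_{x_j} u_j$, and $\partial_{x_3} u_3$ on $\Gamma_0$ are then legitimate, and the weak conormal trace $B^{(i)}u$ agrees with the classical $\sigma(u)\normal$ there. Without this observation, statements like ``$\partial_{x_3}u_3|_{x_3=0}=0$ since $u_3$ is even'' are not justified for $H^1$ data, since $\partial_{x_3}u_3$ is a priori only in $L^2$. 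The paper's extension-of-test-functions argument buys you a proof that never invokes regularity theory; your approach is shorter once you add the one line about interior smoothness near $\Gamma_0$.
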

\begin{proof}
    Let $J = \diag(1,1,-1)$. 
    By a straightforward calculation, we see that if $v = w^\vee$, then
    \begin{equation}\label{eq:reflect-ops}
        \begin{aligned}
            \ddiv(v)(x) &= \ddiv(w)(x^\vee), \\
            \ep(v)(x) &= J\ep(w)(x^\vee)J , \\
            \sigma(v)(x) &= J \sigma(w) (x^\vee) J , 
        \end{aligned}
    \end{equation}
    where we have used that $\lambda$ and $\mu$ are extended evenly. Finally, we obtain $Lv(x) = J (Lw)(x^\vee) = 0$, which proves \eqref{ext-1}.

    The second assertion follows easily for $u \in H^2(\Omega;\CC^3)$, because
    \begin{align*}
         Cu = \begin{pmatrix}  u_1 \\ u_2  \\ \lambda (\partial_1 u_1  + \partial_2 u_2) + (\lambda + 2\mu) \partial_3 u_3   \end{pmatrix} \qquad \text{ on } \Gamma_0 .
     \end{align*}
    In the general case $u \in H^1(\Omega;\CC^3)$ we have to apply again the variational formulation. Let $v \in H^{1}(\Omega;\CC^3)$ with $v = 0$ on $ \Gamma_1$ and $v_\tau =0$ on $\Gamma_0$. We have to show that 
    \begin{align*}
        0 &= \int_\Omega \scal{\sigma^{(i)} (u)}{\varepsilon (v) }_{\CC^3 \times \CC^3} \; \dd x . 
    \end{align*}
    To this end we note that we may approximate $v = (v_1, v_2 ,v_3)^T$ in $H^1(\Omega;\CC^3)$ by functions $\phi_k = ( \phi_{1,k} , \phi_{2,k} , \phi_{3,k})^T$ such that $\supp(\phi_{1,k}) , \supp(\phi_{2,k}) \subseteq \Omega$ and $\supp (\phi_{3,k}) \subseteq \Omega \cup \Gamma_0$. Thus, 
    the functions 
    \begin{align*}
        \tilde \phi_k(x)  := \begin{cases} \phi_k(x) , & \quad x \in \Omega , \\ - \phi_k(x^\vee)^\vee , & \quad x \in \Omega^\vee  , \end{cases} 
    \end{align*}
    belong to $C_c^\infty(\Omega;\CC^3)$ and we have 
    \begin{align*}
         \int_\Omega \scal{\sigma^{(i)} (u)}{\varepsilon (v) }_{\CC^3 \times \CC^3} \; \dd x  &= \lim_{k \to \infty} \int_\Omega \scal{\sigma^{(i)} (u)}{\varepsilon (\phi_k) }_{\CC^3 \times \CC^3} \; \dd x \\
         &= \lim_{k \to \infty} \frac12 \int_{\Omega^{\times 2}} \scal{\sigma^{(i)} (w)}{\varepsilon (\tilde \phi_k) }_{\CC^3 \times \CC^3} \; \dd x\\
         &= \lim_{k \to \infty} \frac12 \scal{L^{(i)} v}{\tilde \phi_k} =  0 .
     \end{align*}
    This implies the assertion. 
\end{proof}
\begin{proof}[Proof of Proposition~\ref{prop:h-double}]
Set $u_i \coloneqq w_i - w_i^\vee$ on $\Omega$. Then the  functions $u_i$ solve the boundary value problem \eqref{eq:bvp_Poisson_neu} and by Lemma~\ref{lem:h-omega} we have that $H_\Omega(u_1,u_2) = 0$. Writing this in terms of $w_1, w_2$ yields
\begin{align*}
	0 = H_\Omega(w_{1}|_{\Omega} , w_{2}|_{\Omega} ) + H_\Omega(w^\vee_{1}|_{\Omega} , w^\vee_{2}|_{\Omega} ) - 
	H_\Omega(w_{1}|_{\Omega} , w_{2}^\vee|_{\Omega}) - H_\Omega(w_{1}^\vee|_{\Omega} , w_{2}|_{\Omega}).
\end{align*}
The assertion follows from    \eqref{eq:reflect-ops} as 
\begin{align*}
	H_\Omega(w_{1}|_{\Omega} , w_{2}|_{\Omega} ) + H_\Omega(w^\vee_{1}|_{\Omega} , w^\vee_{2}|_{\Omega} ) &=  H_{\Omega^{\times2}}(w_1, w_2 ) \\
	H_\Omega(w_{1}^\vee|_{\Omega} , w_{2}|_{\Omega}) + H_\Omega(w_{1}|_{\Omega} , w_{2}|_{\Omega}^\vee) &=  H_{\Omega^{\times2}}(w_1, w_2^\vee ) . 
\end{align*}
\end{proof}
Note that in general,  $H_{\Omega^{\times 2}}(w_1,w_2)$ will not vanish for solutions $w_1,w_2$ of $L^{(i)} w_i = 0$ on $\Omega^{\times2}$. However we will construct  a family of  complex geometric optics solutions such that $H_{\Omega^{\times 2}}(w_1,w_2)$ vanishes asymptotically to infinite order, cf. Proposition~\ref{prop:h-double}.
We briefly want to recall the construction of these functions following~\cite{EsRa}. We note that the original method proposed by \cite{SylvesterUhl} for the Laplacian does not directly apply as the principal symbol of the elasticity operator 
\begin{align*}
    \sigma^{2}(L) &= -(\lambda + \mu)\xi \xi^T -\mu \|\xi\|^2 \id_3.
\end{align*}
is not of diagonal type. Moreover, due to the matrix structure of the operator the construction is more involved. 

The following reduction is due to Ikehata (cf. \cites{UhlmannDev,UhlWangWu}).
Consider the equation
\begin{align}\label{eq:m-plain}
    M \begin{pmatrix} h \\ f\end{pmatrix} \coloneqq \Delta \begin{pmatrix} h \\ f \end{pmatrix} + V_1(x) \begin{pmatrix} \nabla f \\ \nabla \cdot h\end{pmatrix} + V_0(x) \begin{pmatrix} h \\ f\end{pmatrix} = 0,
\end{align}
where $V_1$ and $V_0$ are given by
\begin{align*}
    V_1(x) &= \begin{pmatrix} 2 \mu^{1/2} (-\nabla^2 + \id_3\Delta) \mu^{-1} & - \mu^{-1} \nabla \mu \\ 0 & \frac{\lambda + \mu}{\lambda + 2 \mu} \mu^{1/2} \end{pmatrix},
    &\intertext{and}
    V_0(x) &= \begin{pmatrix} -\mu^{-1/2} (-2\nabla^2 + \id_3\Delta) \mu^{1/2} & -2\mu^{-5/2} (-\nabla^2 + \id_3\Delta) \mu \nabla \mu \\ -\frac{\lambda - \mu}{\lambda + 2\mu} (\nabla \mu)^T & -\mu \Delta \mu^{-1} \end{pmatrix}.
\end{align*}
The function
\begin{align}\label{ansatz}
	 w \coloneqq \mu^{-1/2} h   + \mu^{-1} \nabla f  - f  \nabla \mu^{-1} ,
\end{align}
solves $L w =0$ if $(h,f)^T$ is a solution of \eqref{eq:m-plain}.
Note that the principal symbol of $M$ equals  $-\|\xi\|^2 \cdot \id_4$. 
\begin{remark}\label{rem:missing-delta}
    Note that the term $\id_3\Delta \mu^{-1}$ in $V_1$ is missing in \cite{EsRa}, but we will see that this term is irrelevant for our considerations.
\end{remark}
Let 
\begin{align*}
    Z &\coloneqq \{ \theta \in \CC^3 \setminus\{0\} \colon \theta \cdot \theta = 0\} .
\end{align*}
In what follows we  construct  solutions of \eqref{eq:m-plain} of the form  $(h,f) = e^{i \zeta  \cdot x} (r, s)$, $\zeta \in Z$.

As a first step we extend $\mu$ and $\lambda$ smoothly to some ball $B_R \coloneqq \{ x \in \RR^3 : |x| < R\}$, which contains $\Omega^{\times 2}$.
The main assumption on the coefficients ensures that this is always possible.
We denote by $\psi\in C^\infty_c(\RR^3)$ a cut-off function with $\supp (\psi) \subset B_R$ and $\psi|_{\Omega^{\times 2}} =1$. Then any solution $(r,s)$ of the differential equation 

\begin{align}\label{eq:m-zeta}
	0  =  M_\zeta \begin{pmatrix} r \\ s \end{pmatrix} \coloneqq \Delta_\zeta  \begin{pmatrix} r \\ s \end{pmatrix} + \psi(x) V_1(x) \begin{pmatrix} i\zeta s + \nabla s \\ i\zeta r + \ddiv(r)    \end{pmatrix} + 
		\psi(x) V_0(x) \begin{pmatrix} r \\ s \end{pmatrix} ,
\end{align}
on $\Omega^{\times 2}$ will give a solution $(h,f)$ of \eqref{eq:m-plain}. Here  $\Delta_\zeta = \Delta + 2i\zeta \cdot \partial_x$.
Choose $\theta \in Z$ and $\ell_0 \in \RR^3$ such that $\ell_0 \cdot \theta = 0$.
We define
\begin{align}
    \zeta(\tau) &\coloneqq \frac12 \ell_0 + \tau \theta + \tau\rho(\tau) \Re \theta,\label{eq:zeta}
\end{align}
where
\begin{align}\label{eq:def-rho}
    \rho(\tau) \coloneqq \left( 1 - \frac{|\ell_0|^2}{4\tau^2}\right)^{1/2} - 1 = - \frac{|\ell_0|^2}{4\tau^2} + \mathcal{O}(\tau^{-4}).
\end{align}
For the sake of simplicity we often write  $\zeta$ instead of $\zeta(\tau)$. The remaining steps are well-known, see e.g. \cite{EsRa}. We define $A : B_R \to (\RR^{4\times 4})^3 $ by 
\begin{align*}
	A(x) \cdot \zeta \begin{pmatrix} r \\s \end{pmatrix} &\coloneqq \psi(x) V_1(x) \begin{pmatrix} \zeta s   \\ \zeta \cdot r \end{pmatrix} 
\end{align*}
and let 
\begin{align*}
	\ell(\tau) &\coloneqq 2(\zeta(\tau) - \tau \theta) = \ell_0 + 2 \tau \rho(\tau) \Re \theta.
\end{align*}
Note that 
\begin{align*}
	M_\zeta = M + i(2 \zeta \cdot \partial_x + A \cdot \zeta) =  M_{\frac12\ell(\tau)} + i \tau (2 \theta \cdot \partial_x + A \cdot \theta) .
\end{align*}
To find a solution of $M_\zeta v =0$, we make the formal ansatz 
\[ v   = v_0 + v_1 + \ldots + v_n + \ldots  , \]
and seek to construct $v_n$ such that $\|v_n\|_{H^k(B_R;\CC^4)} = \mathcal O(\tau^{-n})$ as $\tau \to \infty$. To achieve this we look for solutions to  
\begin{align}\label{eq:construction_v_0}
	i \tau (2 \theta \cdot \partial_x + A \cdot \theta) v_0 &= 0 \\
	i \tau (2 \theta \cdot \partial_x + A \cdot \theta) v_n &= - \psi M_{\frac12 \ell(\tau)} v_{n-1} , \qquad  n \ge 1 . \label{eq:construction_v_n}
\end{align}
From \cite{Eskin}*{Theorem 2.1} we have:
\begin{lemma}\label{lem:dbar}
	There exists a matrix $C_0 = C_0(x, \theta)$ depending smoothly on $x$ and $\theta$, which satisfies 
    \begin{align}\label{eq:dbar}
		2(\theta \cdot \partial_x) C_0 + A \cdot \theta C_0 = 0 . 
 	\end{align}
	Moreover, we may choose $C_0$ such that it is invertible for all $(x,\theta) \in B_r \times Z$ and $C_0$ is homogeneous of degree $0$ in $\theta$.
\end{lemma}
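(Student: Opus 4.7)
The key structural observation is that $\theta \cdot \theta = 0$ forces $\theta = \alpha + i\beta$ with real $\alpha, \beta \in \RR^3$ satisfying $\alpha \perp \beta$ and $|\alpha| = |\beta|$. In orthonormal coordinates $(y_1, y_2, y_3)$ adapted so that $\alpha, \beta$ span the $(y_1,y_2)$-plane, the operator $\theta \cdot \partial_x$ becomes a nonzero scalar multiple of $\bar\partial_z$ with $z = y_1 + i y_2$, and $y_3$ enters only as a parameter. Thanks to the cutoff $\psi$, the coefficient $\psi(x)\, A(x) \cdot \theta$ is smooth and compactly supported in $x$, so equation~\eqref{eq:dbar} reduces to a matrix $\bar\partial$-equation of the form $\bar\partial_z C_0 = Q(z, y_3, \theta)\, C_0$ on $\CC$.

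I would construct $C_0$ via the Cauchy transform $Tf(z) \coloneqq \frac{1}{\pi}\int_{\CC} \frac{f(w)}{z-w}\,dA(w)$, which is a right inverse of $\bar\partial_z$ on compactly supported data. Recasting the equation as the Fredholm equation of the second kind $C_0 - T(Q\,C_0) = I$, one notes that $f \mapsto T(Qf)$ is compact on $L^p(\CC)$ for $p > 2$ (since $Q$ has compact support), and uniqueness for the homogeneous problem follows from the similarity principle for first-order planar elliptic systems. This yields a smooth matrix-valued solution with $C_0 \to I$ at infinity. For invertibility, Jacobi's formula shows that $\det C_0$ satisfies the scalar transport equation $2(\theta \cdot \partial_x)\det C_0 = -\operatorname{tr}(A \cdot \theta)\,\det C_0$, which one can solve \emph{explicitly} by exponentiating the scalar Cauchy transform, yielding a nowhere-vanishing determinant and hence pointwise invertibility throughout $B_R$. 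Homogeneity of degree $0$ in $\theta$ is automatic: both~\eqref{eq:dbar} and the normalization at infinity are invariant under $\theta \mapsto \lambda\theta$ for $\lambda \in \CC^*$, so $C_0$ may be chosen to depend only on the direction of $\theta$ in $Z$.

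The main obstacle is not existence on a single plane but smooth joint dependence on $(x, \theta) \in B_R \times Z$: the orthonormal frame adapted to $\theta$ cannot be chosen continuously across all of $Z$, so the reduction to a genuine planar $\bar\partial$-equation is only valid locally in $\theta$. This is circumvented by performing the $\bar\partial$-inversion in a coordinate-free manner, treating $2(\theta \cdot \partial_x)$ as a $\theta$-smooth family of elliptic operators with uniformly bounded right-inverses on $\supp(\psi)$, and using standard parameter-dependent elliptic regularity to obtain smoothness in $\theta$. Carrying out this parameter version of the Cauchy-transform construction is precisely the content of~\cite{Eskin}*{Theorem~2.1}, which the text invokes directly.
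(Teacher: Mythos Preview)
The paper does not supply a proof of this lemma: it is stated as a direct consequence of \cite{Eskin}*{Theorem~2.1}, with no further argument. Your proposal therefore goes well beyond what the paper itself offers, giving a correct outline of the $\bar\partial$-theoretic mechanism behind Eskin's result --- the reduction of $\theta\cdot\partial_x$ to a planar Cauchy--Riemann operator, the Cauchy-transform inversion, and the Jacobi-formula argument for invertibility. Your final paragraph in fact lands exactly where the paper begins, namely the invocation of Eskin's theorem to handle smooth dependence on $\theta$.

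One small caveat: the appeal to the ``similarity principle for first-order planar elliptic systems'' to secure uniqueness in the Fredholm equation is a bit glib in the matrix-valued setting, where the Bers--Vekua theory does not apply verbatim. In practice one either argues via a Neumann-series construction (valid here because $Q$ is compactly supported and the Cauchy transform gains regularity), or one bypasses uniqueness altogether and simply exhibits a solution by iteration. Your determinant argument for invertibility is independent of this and stands on its own.
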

In particular, if $g(z)$ is any vector of polynomials in the complex variable $z$, then we may choose
\begin{align*}
	v_0 &= C_0(x, \theta) g(x \cdot  \theta) . 
\end{align*}

Note that the operator $\theta \cdot \partial_x$ can be transformed into the $\bar\partial$-operator by a linear change of coordinates.
In fact, for $\theta = (1,i,0)^T$, we have that $\theta \cdot \partial_x = \partial_{x_1} + i\partial_{x_2}$.
Denote by $\hat{f}$ the Fourier transform of $f$ and
let \[(\Pi_\theta f)(x) = (2\pi)^{-3} \int_{\RR^3} \frac{e^{ix\eta}\hat{f}(\eta)}{i\eta \cdot \theta} d \theta\]
be the Fourier multiplier with symbol $\eta \mapsto (i\eta\cdot\theta)^{-1}$.
Then $\Pi_\theta$  is the inverse of the differential operator $\theta \cdot \partial_x$, i.e.
\begin{align*}
    \Pi_\theta (\theta \cdot \pa_x f)(x) &= f(x),\\
    \theta \cdot \pa_x (\Pi_\theta g)(x) &= g(x)
\end{align*}
for $f,g \in C^\infty_c(\RR^3)$.
Thus, we may put
\begin{align*}
	v_n \coloneqq -(i\tau)^{-1} C_0(x,\theta) \Pi_\theta  \left( C_0^{-1} (x, \theta) \psi(x)  M'_{\frac12\ell(\tau)} v_{n-1} \right)
\end{align*}
and $v_n$ will satisfy \eqref{eq:construction_v_n}. In particular we obtain 
\begin{align}
	\notag M_\zeta (v_0 + \ldots + v_n)  &= \Bigr\{  M_{\frac12\ell(\tau)} + i \tau (2 \theta \cdot \partial_x + A \cdot \theta) \Bigr\}  ( v_0 + \ldots v_n) + M_\zeta' v_n \\
	&= (1-\psi) \sum_{i=0}^{n-1}  M_{\frac12\ell(\tau)} v_i + M_{\frac12 \ell(\tau)} v_n . 
	\label{eq:M_zeta(v)}
\end{align}
Note that $1- \psi$ vanishes on $\Omega^{\times 2}$. Moreover, comparing the orders in $\tau$ and using that 
\begin{align}\label{eq:pi-sobolev}
    \Pi_\theta: H^s_{\mathrm{comp}} (\RR^3) \rightarrow H^s_{\mathrm{loc}}(\RR^3) ,
\end{align}
we obtain  for any $k \in \NN$ that 
\begin{align}\label{eq:vn-estimate}
	\| v_n \|_{H^k(B_R;\CC^4)} = \mathcal O(\tau^{-n})  \qquad \text{as } \tau \to \infty.  
\end{align}
Then we obtain as in \cite{EsRa} (cf. also \cite{Eskin}*{Sect. 3} and \cite{NakUhl02}):
\begin{lemma}\label{lem:remainder}
	For large $\tau>0$ there exists $v_e \in H^k(B_R;\CC^4)$ which satisfies
	\begin{align*}
		M_\zeta v_e = - \psi M_{\frac12 \ell(\tau)} v_n \qquad \text{in } B_R
	\end{align*}
	and  $\| v_e \|_{H^k(B_r;\CC^4)} = \mathcal O(\tau^{-n-1})$ as $\tau \to \infty$. 
\end{lemma}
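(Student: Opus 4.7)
The plan is to build $v_e$ by inverting $M_\zeta$ on $B_R$ for large $\tau$, with the inverse gaining a factor of $\tau^{-1}$ in a suitable Sobolev norm. Applied to the right-hand side $-\psi M_{\frac12\ell(\tau)}v_n$, whose $H^{k-2}$-norm is $\mathcal O(\tau^{-n})$ by \eqref{eq:vn-estimate} together with the uniform boundedness of the coefficients of $M_{\frac12\ell(\tau)}$, this yields the required estimate $\|v_e\|_{H^k(B_R;\CC^4)} = \mathcal O(\tau^{-n-1})$.

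To set this up I would treat $M_\zeta$ as a perturbation of the scalar operator $\Delta_\zeta = \Delta + 2i\zeta\cdot\partial_x$ acting componentwise on $\CC^4$. Noting that $e^{-i\zeta\cdot x}\nabla(e^{i\zeta\cdot x}\cdot) = i\zeta + \nabla$, one sees that the first-order terms $\zeta$-multiplier terms in $M_\zeta$ arise precisely from the conjugation $e^{-i\zeta\cdot x}M e^{i\zeta\cdot x}$, provided $\zeta\cdot\zeta=0$; the correction $\rho(\tau)$ in \eqref{eq:def-rho} is chosen exactly so that $\zeta(\tau)\cdot\zeta(\tau)=0$ after normalizing $|\Re\theta|=1$. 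Thus the Faddeev/Sylvester--Uhlmann framework applies: there is a right-inverse $G_\zeta$ of $\Delta_\zeta$ on $\RR^3$, realized as the Fourier multiplier associated with a suitably regularized version of $(-|\eta|^2+2i\zeta\cdot\eta)^{-1}$, satisfying a weighted $L^2$-estimate $\|G_\zeta f\|_{L^2_\delta(\RR^3)}\le C\tau^{-1}\|f\|_{L^2_{\delta+1}(\RR^3)}$. Elliptic regularity, using the principal symbol $-|\eta|^2\id_4$, then upgrades this on fixed balls to $\|G_\zeta f\|_{H^s(B_R)}\le C\tau^{-1}\|f\|_{H^s(B_R)}$ together with the derivative-improving bound $\|G_\zeta f\|_{H^{s+2}(B_R)}\le C\|f\|_{H^s(B_R)}$. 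Writing $M_\zeta=\Delta_\zeta\id_4+B_\zeta$ with $B_\zeta u =\psi V_1(\cdot)(i\zeta s+\nabla s,\,i\zeta\cdot r+\nabla\cdot r)^T+\psi V_0 u$, one then seeks $v_e=G_\zeta w$, reducing the task to solving $(I+B_\zeta G_\zeta)w=g$ with $g=-\psi M_{\frac12\ell(\tau)}v_n$, which is inverted on $H^k(B_R;\CC^4)$ by a Neumann series.

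The main obstacle is showing that $B_\zeta G_\zeta$ is contractive on $H^k$: the term $\psi V_1(i\zeta s,i\zeta\cdot r)^T$ in $B_\zeta$ carries a factor $|\zeta|\sim\tau$, so a crude combination of bounds gives only $\|B_\zeta G_\zeta\|=\mathcal O(1)$. Resolving this requires the refined mapping properties of $G_\zeta$ in the weighted Sobolev framework of Sylvester--Uhlmann (or equivalently the semiclassical pseudodifferential calculus used in \cite{Eskin}*{Sect.~3}), where the $\tau^{-1}$ gain is robust enough to dominate one power of $|\zeta|$ coming from the perturbation. Once $\|B_\zeta G_\zeta\|_{H^k(B_R)\to H^k(B_R)}<\tfrac12$ for $\tau$ sufficiently large, the Neumann series converges and produces $v_e=G_\zeta(I+B_\zeta G_\zeta)^{-1}g$ satisfying $M_\zeta v_e=g$ in $B_R$ with the claimed estimate $\|v_e\|_{H^k(B_R;\CC^4)}=\mathcal O(\tau^{-n-1})$.
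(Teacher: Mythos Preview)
The paper does not prove this lemma; it is stated with references to \cite{EsRa}, \cite{Eskin}*{Sect.~3}, and \cite{NakUhl02}. Your outline matches the strategy in those references, and you correctly isolate the only nontrivial point: the zero-th order term $i\psi(A\cdot\zeta)$ in $B_\zeta$ has size $|\zeta|\sim\tau$, so the naive weighted-$L^2$ estimate for $G_\zeta$ gives only $\|B_\zeta G_\zeta\|=\mathcal O(1)$, not a contraction.

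One remark on the resolution. Your phrase ``the $\tau^{-1}$ gain is robust enough to dominate one power of $|\zeta|$'' is not literally correct for the plain Sylvester--Uhlmann estimate, which in that balance produces exactly $\mathcal O(1)$. The mechanism that actually closes the gap in the references is the conjugation by the matrix $C_0$ already supplied by Lemma~\ref{lem:dbar}. Writing $v_e=C_0\tilde v_e$, the dangerous $\tau$-large contribution $i\tau(A\cdot\theta)$ of $B_\zeta$ cancels against the commutator term $2i\tau C_0^{-1}(\theta\cdot\partial_x C_0)$ arising from $[\Delta_\zeta,C_0]$, precisely because $2(\theta\cdot\partial_x)C_0+(A\cdot\theta)C_0=0$. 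The residual perturbation of $\Delta_\zeta$ then has coefficients bounded uniformly in $\tau$, and the Faddeev/semiclassical inverse of \cite{Eskin}*{Sect.~3} (together with the intertwining scheme of \cite{NakUhl02}) yields the contraction and hence the $\mathcal O(\tau^{-n-1})$ bound. Your sketch is right in structure, but it would be cleaner to make this conjugation step explicit rather than appeal to unspecified ``refined mapping properties.''
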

Let $v_e$ be chosen as above. Setting
\begin{align*}
	v \coloneqq v_0 + v_1 + \ldots + v_n + v_e 
\end{align*}
we obtain  that 
\begin{align*}
	M_\zeta v  &= (1- \psi) \sum_{i=0}^n  M_{\frac12\ell(\tau)} v_i  , 
\end{align*}
and in particular  $M_\zeta v = 0$ in  $\Omega^{\times2}$. 
As a next step we use these complex geometric optics solutions to extract information from Proposition~\ref{prop:h-double}.
To this end we choose
\begin{align*}
    \zeta(\tau) &\coloneqq \frac12 \ell_0 + \tau \theta + \tau\rho(\tau) \Re \theta,
\intertext{and}
    \zeta'(\tau) &\coloneqq -\frac12 \ell_0 + \tau \bar\theta + \tau\rho(\tau) \Re \theta.
\end{align*}
Then we obtain solutions $v^{(i)} \coloneqq (r^{(i)} , s^{(i)})^T \in C^\infty(\Omega^{\times2}; \CC^4)$ such that 
\begin{align*}
	M_{\zeta(\tau)} ^{(1)} v^{(1)} = 0 \quad \text{and} \quad  M_{\zeta'(\tau)}^{(2)} v^{(2)} = 0 \quad \text{in } \Omega^{\times 2} . 
\end{align*}
Here $M_\gamma^{(i)}$, $\gamma \in \CC^3$, $i=1,2$, is the corresponding operator corresponding to the pair $(\lambda^{(i)}, \mu^{(i)})$. Let 
\begin{align*}
 	w_1  &\coloneqq  \mu_1^{-1/2} e^{i\zeta \cdot x}  r^{(1)}     + \mu_1^{-1} \nabla \bigl(e^{i\zeta \cdot x} s^{(1)} \bigr)
		- \bigl(e^{i\zeta \cdot x} s^{(1)} \bigr)  \nabla \mu_1^{-1} ,\\
	w_2 &\coloneqq \mu_2^{-1/2} e^{i\zeta' \cdot x}r^{(2)}    + \mu_2^{-1} \nabla \bigl(e^{i\zeta' \cdot x} s^{(2)} \Bigr)
		- \bigl(e^{i\zeta' \cdot x} s^{(2)} \bigr)  \nabla \mu_2^{-1} . 
\end{align*}
Then $L^{(i)} w_i = 0$ in $\Omega^{\times 2}$ and
by Proposition~\ref{prop:h-double}, we have
\begin{align*}
    H_{\Omega^{\times 2}}(w_1, w_2) = H_{\Omega^{\times 2}}(w_{1} , w_{2}^\vee) . 
\end{align*}
If we set
\begin{align*}
    Z_0 = \{\theta \in Z \colon \Im \theta_3 = 0\},
\end{align*}
then we obtain the following result:
\begin{lemma}\label{lem:asymptotic-terms}
    Let $\theta \in Z_0$ and $\ell_0 \in \RR^3$ such that $\theta \cdot \ell_0 = 0$ and $\Re\theta_3 \not = 0$.
    Then
    \begin{align*}
        H_{\Omega^{\times 2}}(w_1, w_2) = \mathcal O(\tau^{-\infty}) , \quad \text{as } \tau \to \infty. 
    \end{align*}
\end{lemma}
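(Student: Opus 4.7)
\emph{Plan.} The strategy is to combine Proposition~\ref{prop:h-double} with an iterated non-stationary phase argument. The observation is that while the combined phase of $w_1 \cdot \overline{w_2}$ would only involve $\zeta - \overline{\zeta'} = \ell_0$ (bounded in $\tau$), the reflected pairing $H_{\Omega^{\times 2}}(w_1, w_2^\vee)$ involves $\zeta - \overline{(\zeta')^\vee}$ (where $(\zeta')^\vee := (\zeta'_1, \zeta'_2, -\zeta'_3)$), which has a large $x_3$-component of order $\tau$ and therefore produces a rapidly oscillating factor in the $x_3$-direction.

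First I would apply Proposition~\ref{prop:h-double} to reduce to showing $H_{\Omega^{\times 2}}(w_1, w_2^\vee) = \mathcal{O}(\tau^{-\infty})$. Writing $w_1(x) = e^{i\zeta \cdot x} A_1(x,\tau)$ and $w_2^\vee(x) = e^{i(\zeta')^\vee \cdot x} \widetilde{A}_2(x,\tau)$, the construction in Section~\ref{sec:construction} (in particular the estimates \eqref{eq:vn-estimate}) together with the smoothness of the Lam\'e parameters guarantees that both amplitudes $A_1, \widetilde{A}_2$ are smooth on $\overline{\Omega^{\times 2}}$ with all derivatives polynomially bounded in $\tau$. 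Substituting into the definition of $H_{\Omega^{\times 2}}$ yields
\begin{equation*}
H_{\Omega^{\times 2}}(w_1, w_2^\vee) = \int_{\Omega^{\times 2}} F(x,\tau) \, e^{i\Psi(x,\tau)} \, \dd x,
\end{equation*}
for a smooth, polynomially-in-$\tau$-bounded amplitude $F$ and linear phase $\Psi(x,\tau) = (\zeta(\tau) - \overline{(\zeta'(\tau))^\vee}) \cdot x$. Next I would compute this phase explicitly using \eqref{eq:zeta} and the hypothesis $\theta \in Z_0$ (so that $\theta_3 \in \RR$): a direct calculation gives
\begin{equation*}
\zeta(\tau) - \overline{(\zeta'(\tau))^\vee} = \bigl((\ell_0)_1,\, (\ell_0)_2,\, 2\tau\theta_3(1+\rho(\tau))\bigr) \in \RR^3,
\end{equation*}
so that $\Psi$ is real-valued and, since $\Re\theta_3 = \theta_3 \neq 0$ while $\rho(\tau) \to 0$, one has $|\pa_{x_3} \Psi| \geq c\tau$ for some $c > 0$ and all sufficiently large $\tau$.

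The conclusion then follows by iterated integration by parts in $x_3$: each step gains a factor of $\tau^{-1}$ in the bulk, and since all derivatives of $F$ remain polynomially bounded the bulk contribution is $\mathcal{O}(\tau^{-N})$ for every $N$. The main obstacle is the treatment of the boundary terms on $\partial\Omega^{\times 2} = \Gamma_1 \cup \Gamma_1^\vee$ generated at each step, which are surface integrals of smooth amplitudes against the same rapidly oscillating factor $e^{i\Psi}$; these are handled by further iterating non-stationary phase tangentially on the smooth pieces of the boundary, using that the tangential component of $\nabla \Psi$ is of order $\tau$ away from the (generically thin) set of horizontal boundary points, so that each additional tangential integration by parts yields a further factor of $\tau^{-1}$. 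Collecting all resulting estimates gives $H_{\Omega^{\times 2}}(w_1, w_2^\vee) = \mathcal{O}(\tau^{-\infty})$, as required.
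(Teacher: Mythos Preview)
Your argument is essentially on the right track up to the treatment of the boundary terms; there, however, it has a genuine gap.

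The reduction via Proposition~\ref{prop:h-double}, the computation of the phase $\zeta - \overline{(\zeta')^\vee}$, and the idea of iterated integration by parts in $x_3$ all match the paper's proof. The problem is your proposed handling of the boundary contributions on $\Gamma_1 \cup \Gamma_1^\vee$. You propose to iterate non-stationary phase \emph{tangentially} along the boundary, relying on the tangential component of $\nabla\Psi$ being of order~$\tau$. But $\nabla\Psi = \bigl((\ell_0)_1,(\ell_0)_2,\,2\tau\theta_3(1+\rho)\bigr)$ has its only large component in the $e_3$-direction, so the tangential gradient is of order~$\tau$ only at boundary points where the outward normal is \emph{not} close to $\pm e_3$. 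Nothing in the hypotheses prevents $\Gamma_1$ from containing flat horizontal pieces (where $\normal = \pm e_3$), and near such pieces your tangential gain fails completely. The phrase ``generically thin set of horizontal boundary points'' is not justified and, even if it were, you would still need a uniform estimate in a neighbourhood of that set, which tangential integration by parts cannot provide.

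The paper circumvents this entirely by a different mechanism: the amplitude $F$ contains the factors $\lambda_2-\lambda_1$ and $\mu_2-\mu_1$, and the boundary determination result of Nakamura--Uhlmann \cite{NakUhl95} (which uses $\Lambda^{(1)}=\Lambda^{(2)}$) implies that these differences vanish on $\Gamma_1$ to infinite order. Hence $\partial_3^k F$ vanishes on $\Gamma_1$ (and by the even-reflection symmetry on $\Gamma_1^\vee$) for every $k$, so all boundary terms in the iterated integration by parts are identically zero. No oscillatory estimate on the boundary is needed at all. You should invoke \cite{NakUhl95} at this step rather than attempt a tangential non-stationary phase argument.
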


\begin{proof}
By the previous considerations, it suffices to show that
\begin{align*}
    H_{\Omega^{\times 2}}(w_{1} , w_{2}^\vee) = \mathcal O(\tau^{-\infty}).
\end{align*}
Consider the functions $p_i , q_i$ defined by 
\begin{align*}
    p_1 &= e^{-i\zeta \cdot x} \ddiv (w_1) , & q_1 &= e^{-i\zeta \cdot x} \varepsilon (w_1) , \\
	p_2 &= e^{-i\zeta'\cdot x} \ddiv(w_2) , & q_2 &= e^{i\zeta' \cdot x} \varepsilon (w_2) .
\end{align*}
From \eqref{eq:vn-estimate} and Lemma~\ref{lem:remainder}, we obtain for any $k \in \NN$ that
\begin{equation}\label{eq:pq-bound}
    \|p_1 \overline{p_2} \|_{H^k(\Omega^{\times2};\CC)} = \mathcal O(\tau^4) , \qquad \|\scal{q_1}{q_2}_{\CC^{3\times3}} \|_{H^k(\Omega^{\times2};\CC)} = \mathcal O(\tau^4) .
\end{equation}
Putting
\begin{align*}
	  F (x) &\coloneqq \{ \lambda_2 - \lambda_1 \}  p_{1} (x) \overline{p_{2}(x^\vee)}  + 2\{\mu_2 - \mu_1\}  \scal{q_1(x)}{Jq_2(x^\vee) J}_{\CC^3 \times \CC^3},
\end{align*}
we have 
\begin{align*}
	H_{\Omega^{\times 2}} (w_{1} , w_{2}^\vee) &=\int_{\Omega^{\times 2}}  e^{i (\zeta - \overline{\zeta'}^\vee) \cdot x} F(x) \; \dd x .
\end{align*}
We note that the assumption $\theta \in Z_0$ implies that $(\zeta - \overline{\zeta'}^\vee)_3 = 2\tau (1+ \rho(\tau)) \Re \theta_3$.
Hence, by partial integration for  $\Omega$ and $\Omega^\vee$ we obtain  
\begin{align*}
	H_{\Omega^{\times 2}} (w_{1} , w_{2}^\vee)   &=  - \frac{1}{ 2i \tau (1 + \rho(\tau))\Re\theta_3}   \int_{\Omega^{\times 2}} e^{i (\zeta - \overline{\zeta'}^\vee) \cdot x} \partial_3 F(x) \; \dd x\\
	&\quad + \int_{\partial \Omega} e^{i (\zeta - \overline{\zeta'}^\vee) \cdot x} \normal_3\partial_3 F(x) \; \dd x  + \int_{\partial (\Omega^\vee)} e^{i (\zeta - \overline{\zeta'}^\vee) \cdot x} \normal_3\partial_3 F(x) \; \dd x. 
\end{align*}
Using \cite{NakUhl95} we have that $\mu_1 = \mu_2$ and $\lambda_1 = \lambda_2$ on $\Gamma_1$ to infinite order.
Then the symmetry assumptions imply that the boundary integrals vanish. By induction we have 
\begin{align*}
	H_{\Omega^{\times 2}} (w_{1} , w_{2}^\vee)   &= \left( - \frac{1}{ 2i \tau(1 + \rho(\tau)) \Re\theta_3} \right)^n  \int_{\RR^3} e^{i (\zeta - \overline{\zeta'}^\vee) \cdot x} \partial_3^n F(x) \; \dd x =  \mathcal O(\tau^{-n}), 
\end{align*}
since $(1 + \rho(\tau))^{-1} = 1 + O(\tau^{-1})$.
\end{proof}

\section{Proof of Theorem~\ref{thm:main1} and Theorem~\ref{thm:main2}}\label{sec:proof-bdd}
\subsection{The Leading Order Term of the Form \texorpdfstring{$H$}{H}}
As a next step we want to apply the results by Eskin and Ralston. To this end we set 
\begin{align*}
	a_i \coloneqq (\theta \cdot \partial_x)^2 \mu_i^{-1} , \qquad b_i \coloneqq \frac{\mu_i}2 \cdot \frac{\lambda_i + \mu_i}{\lambda_i + 2\mu_i} 
\end{align*}
as well as 
\begin{align*} 
	 \begin{pmatrix} R_0^{(i)} \\ s_0^{(i)} \end{pmatrix} (x,\theta) &\coloneqq  \begin{pmatrix} \mu_i^{-1} \theta^T & 0 \\ 0 & 1 \end{pmatrix} C^{(i)}_0(x,\theta) g_i(x \cdot \theta) , \qquad i =1 ,2 .
\end{align*}
Here $C^{(i)}_0(x,\theta)$ is chosen as in Lemma \ref{lem:dbar} and $g_i(z)$ is a vector of polynomials in $z$. 
By the same calculation as in \cite{EsRa}, we obtain:\footnote{The term mentioned in Remark~\ref{rem:missing-delta} vanishes because $\theta \cdot \theta = 0$.}
\begin{lemma}[cf. \cite{EsRa}*{Lemma 2.1}]\label{lem:dbar-h2}
We have $H_{\Omega^{\times 2}}(w_1, w_2)  = \tau^2 H_2 + \mathcal O(\tau)$, where  
\begin{align*}
	H_2 = \int_{\Omega^{\times 2}} e^{i \ell_0 \cdot x} ( \overline{R_0}^{(2)}, \overline{s_0}^{(2)} )(x,\theta)  V(x,\theta) \begin{pmatrix} R_0^{(1)} \\ s_0^{(1)} \end{pmatrix} (x,\theta) \; \dd x  , 
\end{align*}
and 
\begin{align*}
    V(x,\theta) \coloneqq \begin{pmatrix} ( \lambda_1 + \mu_1 - (\lambda_2 + \mu_2)) \frac{\mu_1 \mu_2}{(\lambda_1 + 2\mu_1)(\lambda_2 + 2\mu_2)} 
	& 2(\mu_2^{-1} -\mu_1^{-1}) (\theta \cdot \partial_x) b_2 \\[5pt] 2(\mu_2^{-1} -\mu_1^{-1}) (\theta \cdot \partial_x) b_1 
	& 2 (\mu_2^{-1} -\mu_1^{-1}) (b_1 a_1 + b_2 a_2) \end{pmatrix} . 
\end{align*}
\end{lemma}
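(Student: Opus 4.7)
The plan is to expand $\ddiv(w_i)$ and $\varepsilon(w_i)$ in powers of $\tau$, substitute into $H_{\Omega^{\times 2}}(w_1,w_2)$, and extract the leading $\tau^2$-coefficient. Writing $w_i = e^{i\zeta_i \cdot x}W_i$ with $\zeta_1 = \zeta(\tau)$, $\zeta_2 = \zeta'(\tau)$, $\theta_1 = \theta$, $\theta_2 = \bar\theta$, and $W_i := \mu_i^{-1/2} r^{(i)} + \mu_i^{-1}(i\zeta_i s^{(i)} + \nabla s^{(i)}) - s^{(i)} \nabla \mu_i^{-1}$, the Leibniz rule yields $\ddiv(w_i) = e^{i\zeta_i \cdot x}(i\zeta_i \cdot W_i + \ddiv W_i)$ and $\varepsilon(w_i) = e^{i\zeta_i \cdot x}[\tfrac{i}{2}(\zeta_i W_i^T + W_i \zeta_i^T) + \varepsilon(W_i)]$. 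The key algebraic observation is the identity $\zeta(\tau) - \overline{\zeta'(\tau)} = \ell_0$, so the exponential factors in the product integrand of $H_{\Omega^{\times 2}}$ collapse to the $\tau$-independent phase $e^{i\ell_0 \cdot x}$.

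Substituting the expansions $\zeta_i = \tau\theta_i + \tfrac12\ell_0 + O(\tau^{-1})$ and $W_i = \tau W_i^{(1)} + W_i^{(0)} + O(\tau^{-1})$ with $W_i^{(1)} = i\theta_i \mu_i^{-1} s_0^{(i)}$, a direct computation shows that $e^{-i\zeta_i\cdot x}\ddiv(w_i)$ is of exact order $\tau$ in $\tau$ (the would-be $\tau^2$-term from $i\zeta_i \cdot W_i$ is killed by $\theta_i \cdot \theta_i = 0$) and that $e^{-i\zeta_i\cdot x}\varepsilon(w_i) = -\tau^2 \mu_i^{-1} s_0^{(i)} \theta_i \theta_i^T + O(\tau)$. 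Inside the integrand, the nominal $\tau^4$-term in the strain inner product vanishes because it is proportional to $\mathrm{tr}(\theta\theta^T\cdot\theta\theta^T) = (\theta \cdot \theta)^2 = 0$. The $\tau^3$-term likewise vanishes: when the $\tau$-subleading coefficient of $\varepsilon(w_i)$ is sandwiched between two $\theta$'s, every resulting term contains one of the factors $\theta\cdot\theta$, $\bar\theta\cdot\bar\theta$, or $\theta\cdot\ell_0$, all of which vanish by $\theta \in Z$ and $\theta\cdot \ell_0 = 0$.

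The surviving $\tau^2$-contributions come from three sources: (a) the product of the two leading-order divergences, (b) cross-pairings between the $\tau^2$-leading strain term on one side and the $\tau^0$-subleading strain term on the other, and (c) pairings of the leading divergence with the subleading strain-trace. I would then use the $\bar\partial$-equation $2(\theta\cdot\partial_x)C_0 + (A\cdot\theta)C_0 = 0$ from Lemma~\ref{lem:dbar} to re-express $\theta_i\cdot\nabla s_0^{(i)}$ and $\theta_i\cdot r_0^{(i)}$ in terms of $R_0^{(i)}$ and $s_0^{(i)}$, and the Lam\'e identities to assemble $\mu_i,\lambda_i$ into $b_i$ and $a_i$. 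The collected $\tau^2$-coefficient then takes the stated bilinear form $(\overline{R_0^{(2)}}, \overline{s_0^{(2)}})V(x,\theta)(R_0^{(1)}, s_0^{(1)})^T$; in particular, both the divergence and the strain contribute to the top-left entry, and they combine to produce the Lam\'e combination $\lambda_1 + \mu_1 - (\lambda_2 + \mu_2)$ rather than the naive $\lambda_2 - \lambda_1$ from the divergence alone.

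The main obstacle is the algebraic bookkeeping in this last step: many intermediate terms (from the four pieces of $W_i$, the $\ell_0$-subleading piece of $\zeta_i$, the symmetrization in $\varepsilon$, and contributions from $\nabla\mu_i^{-1}$) must be regrouped into the compact matrix $V$. In this process one also verifies the footnote (cf. Remark~\ref{rem:missing-delta}) that the missing $\id_3\Delta\mu^{-1}$-correction in $V_1$ is irrelevant, since it would contribute to the $\tau^2$-coefficient only through a factor proportional to $\theta\cdot\theta = 0$ and hence drops out for $\theta \in Z$.
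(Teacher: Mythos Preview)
The paper gives no independent proof of this lemma; it simply writes ``By the same calculation as in \cite{EsRa}, we obtain'' and defers entirely to Eskin--Ralston's Lemma~2.1. Your outline is a correct and faithful account of exactly that computation --- the collapse of the phase to $e^{i\ell_0\cdot x}$ via $\zeta-\overline{\zeta'}=\ell_0$, the vanishing of the nominal $\tau^4$- and $\tau^3$-contributions through $\theta\cdot\theta=0$ and $\theta\cdot\ell_0=0$, and the regrouping of the surviving $\tau^2$-terms into the bilinear form with matrix $V$ --- and you also correctly explain the footnote about the $\id_3\Delta\mu^{-1}$-term.
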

Using Lemma \ref{lem:asymptotic-terms} we immediately obtain:
\begin{coro}
    Let $\theta \in Z_0$ and $\ell_0 \in \RR^3$ such that $\theta \cdot \ell_0 = 0$.
    If $\Re(\theta_3) \neq 0$, then
	\begin{align}\label{eq:H_2_C}
        0&= \int_{\Omega^{\times 2}} e^{i \ell_0 \cdot x} ( \overline{R_0}^{(2)}, \overline{s_0}^{(2)} )(x,\theta)  V(x,\theta) \begin{pmatrix} R_0^{(1)} \\ s_0^{(1)} \end{pmatrix} (x,\theta) \; \dd x . 
    \end{align}
\end{coro}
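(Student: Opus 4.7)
The plan is to read this corollary as a direct comparison of the two asymptotic expressions that have already been established for $H_{\Omega^{\times 2}}(w_1, w_2)$. Lemma~\ref{lem:dbar-h2} gives the expansion $H_{\Omega^{\times 2}}(w_1, w_2) = \tau^2 H_2 + \mathcal{O}(\tau)$, while Lemma~\ref{lem:asymptotic-terms} states that the same quantity is $\mathcal{O}(\tau^{-\infty})$ under the hypotheses $\theta \in Z_0$, $\theta \cdot \ell_0 = 0$, and $\Re\theta_3 \neq 0$. The first thing I would verify is that the hypotheses of the corollary are exactly those of Lemma~\ref{lem:asymptotic-terms}, so that lemma applies verbatim.

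Having checked this, I would subtract the two expansions. From Lemma~\ref{lem:dbar-h2},
\[
\tau^2 H_2 = H_{\Omega^{\times 2}}(w_1, w_2) + \mathcal{O}(\tau),
\]
and substituting the bound of Lemma~\ref{lem:asymptotic-terms} into the right-hand side gives $\tau^2 H_2 = \mathcal{O}(\tau)$ as $\tau \to \infty$. Dividing through, this reads $H_2 = \mathcal{O}(\tau^{-1})$.

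The final step is the crucial observation that $H_2$, as defined in Lemma~\ref{lem:dbar-h2}, depends only on $\ell_0$, $\theta$, the coefficients $\lambda_i,\mu_i$, and the polynomial data $g_i$ entering $R_0^{(i)}, s_0^{(i)}$; none of these depend on $\tau$. Therefore the bound $H_2 = \mathcal{O}(\tau^{-1})$ forces $H_2 = 0$, which is precisely the identity \eqref{eq:H_2_C}. In particular, there is no real obstacle here: all of the analytic work (the reflection argument, the construction of CGO solutions via the Ikehata reduction, the $\bar{\partial}$-inversion, and the boundary integration used to obtain the $\mathcal{O}(\tau^{-\infty})$ decay) has already been carried out, and the corollary is merely the extraction of the leading-order coefficient.
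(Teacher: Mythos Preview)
Your argument is correct and is exactly what the paper intends: it states the corollary as an immediate consequence of Lemma~\ref{lem:asymptotic-terms} (combined with Lemma~\ref{lem:dbar-h2}), and you have simply spelled out the extraction of the $\tau^2$-coefficient, including the observation that $H_2$ is independent of $\tau$.
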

By continuity, the assertion also holds true for $\Re(\theta_3) = 0$. The remaining part of the proof may be deduced as in \cite{EsRa}, taking into account that we have to assume that $\Im(\theta_3) = 0$. For the sake of completeness we want to recall the main steps. We note that  $(R_0^{(i)} , s_0^{(i)})^T$ may be written as 
\begin{align}
    \begin{pmatrix} R_0^{(i)} \\ s_0^{(i)} \end{pmatrix}(x,\theta)  = \tilde{C}^{(i)}(x,\theta) \tilde g_i(x\cdot \theta) , 
\end{align}
where $\tilde g_i(z)$ is a 2-vector of polynomials and $\tilde{C}^{(i)} = \tilde{C}^{(i)}(x,\theta)$ satisfies the equation 
the equation
\begin{align}\label{eq:dbar-two}
    (\theta \cdot \partial_x) \tilde{C}^{(i)} = \tilde{A}^{(i)} \tilde{C}^{(i)} . 
\end{align}
Here $\tilde{A}^{(i)} = \tilde{A}^{(i)}(x,\theta)$ is given by
\begin{align*}
    \tilde{A}^{(i)} = \begin{pmatrix} 0 & - a_i \\ b_i & 0 \end{pmatrix} . 
\end{align*}
Again, we apply \cite{Eskin}*{Theorem 2.1} to obtain a solution $\tilde{C}^{(i)}(x,\theta)$ of \eqref{eq:dbar-two} defined for $|\Re \theta| = |\Im \theta| = 1$
and that is invertible for $x < |R|$.

We observe that we can smoothly extend $\tilde{C}^{(i)}$ to $\theta \in Z$ by requiring the homogeneity
\begin{align*}
    \tilde{C}^{(i)}(x,r \theta) = \begin{pmatrix}1 & 0\\0 & r^{-1}\end{pmatrix}\tilde{C}^{(i)}(x,\theta), \quad r>0.
\end{align*}
Then we may consider in Equation \eqref{eq:H_2_C} solutions of \eqref{eq:dbar-two} instead of \eqref{eq:dbar} and  we obtain:
\begin{proposition}[\cite{EsRa}*{Sect. 3}]\label{prop:upgrade-soln}
    Let $\tilde C^{(i)}(x, \theta)$ be a solution of \eqref{eq:dbar-two}, which is invertible for $|x| < R$ and let $\tilde{g}_{i}$  be  any 2-vectors of polynomials. Then we have 
    \begin{align*}
        0 &=  \int_{\Omega^{\times 2}} e^{i\ell_0 \cdot x} \overline{\tilde{g}_2(\theta \cdot x)}^t (\Ctwo)^*(x,\bar\theta) V(x,\theta) \Cone(x,\theta) \tilde{g}(x\cdot \theta) dx , 
    \end{align*}
    where  $\theta \in Z_0$, $\ell_0 \in \RR^3$ are chosen such that $\theta \cdot \ell_0 = 0$.
\end{proposition}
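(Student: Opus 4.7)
The strategy is to argue that the integral identity of the corollary extends to arbitrary invertible $\tilde{C}^{(i)}$ by essentially rerunning the CGO construction with a more flexible choice of leading term.

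Let $\tilde{C}_0^{(i)}$ denote the distinguished solution of \eqref{eq:dbar-two} that is implicit in the corollary, constructed from Lemma~\ref{lem:dbar} via the $4\times 4$-to-$2\times 2$ reduction. For any other invertible solution $\tilde{C}^{(i)}$, set $N_i := (\tilde{C}_0^{(i)})^{-1} \tilde{C}^{(i)}$; this matrix is invertible on $B_R$, and a direct computation from $(\theta_i \cdot \partial_x) \tilde{C}_0^{(i)} = \tilde{A}^{(i)} \tilde{C}_0^{(i)}$ and the analogous equation for $\tilde{C}^{(i)}$ shows that $(\theta_i \cdot \partial_x) N_i = 0$, with $\theta_1 = \theta$ and $\theta_2 = \bar\theta$. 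Hence $\tilde{C}^{(i)}(x, \theta_i) \tilde{g}_i(\theta_i \cdot x) = \tilde{C}_0^{(i)}(x, \theta_i)\, h_i(x)$ with $h_i := N_i\, \tilde{g}_i(\theta_i \cdot x)$ a smooth $2$-vector in $\ker(\theta_i \cdot \partial_x)$, though in general not of the form ``polynomial in $\theta_i \cdot x$''.

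Thus the proposition reduces to showing that the corollary's integral vanishes for any leading CGO term of the form $v_0^{(i)} = \tilde{C}_0^{(i)} h_i$ with smooth $h_i \in \ker(\theta_i \cdot \partial_x)$. This extension is essentially already contained in the CGO machinery of Section~\ref{sec:construction}: the leading-order equation $(\theta_i \cdot \partial_x) v_0^{(i)} = \tilde{A}^{(i)} v_0^{(i)}$ holds for any such $h_i$; the iteration defining $v_n^{(i)}$ and the remainder $v_e^{(i)}$ (Lemma~\ref{lem:remainder}) uses only smoothness of $v_0^{(i)}$, invertibility of $\tilde{C}_0^{(i)}$ on $B_R$, and the mapping property \eqref{eq:pi-sobolev} of $\Pi_{\theta_i}$; and the asymptotic analysis in Lemma~\ref{lem:asymptotic-terms} together with the expansion underlying Lemma~\ref{lem:dbar-h2} depends only on the $\tau$-decay of the higher-order terms and on the partial-integration argument in $x_3$, neither of which is sensitive to the polynomial structure of $h_i$. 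Substituting $h_i = N_i \tilde{g}_i(\theta_i \cdot x)$ and rewriting via $\tilde{C}_0^{(i)} h_i = \tilde{C}^{(i)} \tilde{g}_i(\theta_i \cdot x)$ then yields the stated identity for arbitrary invertible $\tilde{C}^{(i)}$.

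The main obstacle I anticipate is making precise the claim that each step of Section~\ref{sec:construction} really uses only smoothness of the leading term and the kernel condition $(\theta_i \cdot \partial_x) h_i = 0$, rather than some residual feature of the polynomial structure of $\tilde{g}_i$ or of the specific $4\times 4$ lift via \eqref{eq:dbar}. This should be a routine but somewhat tedious inspection of the proofs leading to Lemma~\ref{lem:dbar-h2}.
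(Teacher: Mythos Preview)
Your proposal is correct and follows essentially the same route as the paper (which defers to \cite{EsRa}*{Sect.~3}): both amount to observing that the entire CGO construction of Section~\ref{sec:construction} can be rerun with any leading term $v_0^{(i)}$ solving the $2\times 2$ transport equation \eqref{eq:dbar-two}, after which Lemmas~\ref{lem:asymptotic-terms} and~\ref{lem:dbar-h2} and the Corollary go through verbatim. Your factorisation $\tilde{C}^{(i)} = \tilde{C}_0^{(i)} N_i$ with $(\theta_i\cdot\partial_x)N_i = 0$ is a clean way to see that ``arbitrary invertible $\tilde{C}^{(i)}$'' and ``fixed $\tilde{C}_0^{(i)}$ with arbitrary $h_i\in\ker(\theta_i\cdot\partial_x)$'' are interchangeable, but it does not shortcut the verification you flag at the end---you still need to check (as does Eskin--Ralston) that nothing in the iteration \eqref{eq:construction_v_n}, Lemma~\ref{lem:remainder}, or the bound \eqref{eq:pq-bound} uses the polynomial form of $g_i$, only smoothness of $v_0^{(i)}$ and the kernel condition.
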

\subsection{Parametrizing the Complex Geometric Optics Solutions}\label{sec:parametrize}
Since \[\left( \frac{\Re\theta}{|\Re\theta|}, \frac{\Im\theta}{|\Im\theta|}, \frac{\ell_0}{|\ell_0|}\right)\] forms an orthonormal basis, we change the coordinates into this basis.
To this end, write $y = (y_1,y_2,y_3)$ with $y_1 = (\Re\theta/|\Re\theta|) \cdot x$, $y_2 = (\Im\theta/|\Im\theta|) \cdot x$, and $y_3 = (\ell_0/|\ell_0|) \cdot x$.

We extend $V(\cdot,\theta)$ by zero to $\RR^3$ and by \cite{NakUhl95} and the assumptions on the Lam\'e parameters $\lambda_j$ and $\mu_j$, this extention is smooth.
From Proposition~\ref{prop:upgrade-soln} we obtain by applying the Fourier transform that for any 2-vectors of polynomials $g_1,g_2$ and $\theta \in Z_0$,
\begin{align}\label{eq:transformed-h}
    0 = \int_{\RR^2} \overline{g_2(\theta\cdot y)}^t (\tilde{C}_2)^*(y,\overline{\theta}) V(y,\theta) \tilde{C}_1(y,\theta) g_1(\theta \cdot y) \;\dd y_1 \;\dd y_2, \qquad y_3 \in \RR.
\end{align}

Recall that $\Pi_\theta$ is the Fourier multiplier with $\eta \mapsto (i\eta \cdot \theta)^{-1}$.
Set
\begin{align}
    B_0(y,\theta) &= \Ctwo(y,\overline{\theta})^* V(y,\theta) \Cone(y,\theta), \label{eq:b-zero}\\
    B(y,\theta) &= \Pi_\theta B_0(y,\theta), \label{eq:b}\\
    B'(y,\theta) &= (\Ctwo(y,\overline{\theta})^*)^{-1} B(y,\theta) \Cone(y,\theta)^{-1}. \label{eq:b-prime}
\end{align}

The reason for defining $B'$ is that it has a particular simple form:
\begin{proposition}\label{prop:b-prime}
    There exist functions $b_{12},b_{21},b_{22}$ on $B_R$ such that for all $\theta \in Z_0$,
    \begin{align*}
        B'(x,\theta) = \begin{pmatrix}0 & b_{12}(x)\\b_{21}(x) & b_{22}(x) \cdot \theta\end{pmatrix}.
    \end{align*}
\end{proposition}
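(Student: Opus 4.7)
The plan is to combine the $\theta$-scaling of each constituent factor with a first-order matrix equation satisfied by $B'$, and then to read off the form from the combined constraints.

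I would first establish the homogeneity. Direct inspection of the formulas for $V$ yields $V(x,r\theta) = \diag(1,r)\,V(x,\theta)\,\diag(1,r)$ for $r>0$, because $V_{11}$ is $\theta$-independent, $V_{12}$ and $V_{21}$ are linear in $\theta$, and $V_{22}$ is quadratic. Combined with $\tilde{C}^{(i)}(x,r\theta) = \diag(1,r^{-1})\tilde{C}^{(i)}(x,\theta)$ and its adjoint counterpart for $\tilde{C}^{(2)}(\cdot,\bar\theta)$, this gives $B_0(x,r\theta) = B_0(x,\theta)$; since the symbol $1/(i\eta\cdot\theta)$ of $\Pi_\theta$ scales by $r^{-1}$, we obtain $B(x,r\theta) = r^{-1}B(x,\theta)$, and hence
\[
B'(x,r\theta) \;=\; r^{-1}\diag(1,r)\,B'(x,\theta)\,\diag(1,r).
\]
Reading this off entry-wise, $(B')_{11},(B')_{12},(B')_{21},(B')_{22}$ are $\theta$-homogeneous of degrees $-1,0,0,+1$, respectively.

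Next, I would derive the first-order matrix equation. Differentiating $B = \tilde{C}^{(2)*}B'\tilde{C}^{(1)}$ along $\theta\cdot\partial_x$ and using the identities $(\theta\cdot\partial_x)\tilde{C}^{(1)} = \tilde{A}^{(1)}\tilde{C}^{(1)}$ and $(\theta\cdot\partial_x)\tilde{C}^{(2)*} = \tilde{C}^{(2)*}\tilde{A}^{(2)*}$ (the second obtained by conjugate-transposing $(\bar\theta\cdot\partial_x)\tilde{C}^{(2)}(\cdot,\bar\theta) = \tilde{A}^{(2)}(\cdot,\bar\theta)\tilde{C}^{(2)}(\cdot,\bar\theta)$ and using $\theta\cdot\partial_x = 2|\Re\theta|\partial_{\bar z}$ in the rotated coordinates), I obtain
\[
(\theta\cdot\partial_x)\,B' \;=\; V - \tilde{A}^{(2)*}\,B' - B'\,\tilde{A}^{(1)}.
\]
With $\tilde{A}^{(i)} = \begin{pmatrix}0 & -a_i\\ b_i & 0\end{pmatrix}$, this decomposes into four coupled scalar equations for the entries of $B'$.

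Finally, I would pin down the form by substituting the ansatz $B'(x,\theta) = \begin{pmatrix}0 & b_{12}(x) \\ b_{21}(x) & b_{22}(x)\cdot\theta\end{pmatrix}$ into these four equations. This yields consistent algebraic/differential relations defining $b_{12},b_{21},b_{22}$ in terms of the Lam\'e parameters---for instance, the $(1,1)$-equation forces $V_{11} = b_2 b_{21}+b_1 b_{12}$, while the $(1,2)$-equation gives $\nabla b_{12} = 2(\mu_2^{-1}-\mu_1^{-1})\nabla b_2 - b_2 b_{22}$ (after using linearity of $V_{12}$ in $\theta$ together with the fact that $Z_0$ spans $\CC^3$). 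The $\theta$-content of the ansatz---vanishing $(1,1)$ entry, $\theta$-independent off-diagonals, and linear $(2,2)$ entry---matches exactly the homogeneity degrees from the first step.

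The main obstacle is the uniqueness step: one must show that the $B'$ constructed via $\Pi_\theta$ actually coincides with this polynomial ansatz, rather than merely satisfying the same scaling. Since $\Pi_\theta$'s symbol $1/(i\eta\cdot\theta)$ a priori introduces negative powers of $\theta$, the argument requires tracing that the Cauchy transform $\Pi_\theta$ applied to $B_0$, sandwiched between $(\tilde{C}^{(2)*})^{-1}$ and $(\tilde{C}^{(1)})^{-1}$, produces only polynomial contributions in $\theta$ thanks to the $\bar\partial$-equations satisfied by $\tilde{C}^{(i)}$. This cancellation, together with the decay of $B$ at infinity in the 2D plane $\sspan\{\Re\theta,\Im\theta\}$, is the crux of the argument.
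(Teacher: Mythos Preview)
Your homogeneity computation is correct and matches the paper's Lemma~\ref{lem:homogeneity}. But after that your argument has a genuine gap, which you in fact identify yourself in the final paragraph: homogeneity of degrees $-1,0,0,1$ on $Z_0$ does \emph{not} by itself force the entries to be $0$, constant, constant, and linear in $\theta$, respectively. There are plenty of smooth functions on $Z_0$ with those homogeneities that are not of this form. Your ``matrix equation plus ansatz'' step is circular: showing that the ansatz satisfies $(\theta\cdot\partial_x)B' = V - (\tilde A^{(2)})^*B' - B'\tilde A^{(1)}$ does not show that the actual $B'$ has this form, because that PDE in $x$ places no constraint on the $\theta$-dependence. (In the paper, this matrix identity is the content of the \emph{subsequent} Proposition~\ref{prop:main}, which is derived \emph{from} Proposition~\ref{prop:b-prime}, not the other way around.)

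The missing ingredient is a complex-analytic one. The paper parametrises $Z_0$ by $\xi(t) = \big(\tfrac12(t-t^{-1}),\,\tfrac{i}{2}(t+t^{-1}),\,1\big)$ for $t\in\CC\setminus\{0\}$ and proves (Lemma~\ref{lem:holomorphy}) that $t\mapsto B'(x,\xi(t))$ is \emph{holomorphic} on $\CC\setminus\{0\}$. This holomorphy is nontrivial: it relies on the vanishing of $\int B_0\,dy_1\,dy_2$ (which comes from Proposition~\ref{prop:upgrade-soln} with $g_1,g_2$ the standard basis vectors) together with a lemma of Eskin to kill the contribution $(\bar\partial_t\Pi_{\xi(t)})B_0$, and on commuting the conjugating matrices $M_j$ through $\Pi_{\xi(t)}$. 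Once holomorphy is in hand, the homogeneity gives growth bounds $|B'_{ij}(x,\xi(t))|\le C|\xi(t)|^{i+j-3}$, and since $|\xi(t)|\sim |t|+|t|^{-1}$, Liouville's theorem on $\CC\setminus\{0\}$ forces $B'_{11}\equiv 0$, $B'_{12},B'_{21}$ constant in $t$, and $B'_{22}$ a Laurent polynomial $c_{-1}t^{-1}+c_0+c_1 t$, which is exactly $b_{22}(x)\cdot\xi(t)$ for a suitable vector $b_{22}$. This Liouville step is the ``uniqueness'' mechanism you were looking for.
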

Before we prove the proposition, we state the main result of this section:
\begin{proposition}\label{prop:main}
    For all $\theta \in Z$ and $x \in \Omega$, we have the following set of equations
    \begin{equation*}
    \left\{\begin{aligned}
        -b_2 b_{21} - b_1 b_{12} &= v_{11}\\
        \theta \cdot \partial_x b_{12} - b_2 b_{22}\cdot \theta &= v_{12}\\
        \theta \cdot \partial_x b_{21} - b_1 b_{22}\cdot \theta &= v_{21}\\
        \theta \cdot \partial_x b_{22} \theta + a_2 b_{12} + a_1 b_{21} &= v_{22},
    \end{aligned}\right.
    \end{equation*}
    where the $v_{jk}$ are the entries of the matrix $V = V(x,\theta)$ as in Lemma~\ref{lem:dbar-h2}.
\end{proposition}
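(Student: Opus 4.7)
My plan is to differentiate the defining identity for $B'$ along $\theta \cdot \partial_x$, use \eqref{eq:dbar-two} to convert derivatives of $\Cone$ and $\Ctwo^*$ into algebraic multiplication, then cancel the invertible outer factors to get a $2\times 2$ matrix Sylvester-type equation whose entries are exactly the four scalar relations claimed. Concretely, rearranging \eqref{eq:b-prime} gives
\[ \Ctwo(x,\bar\theta)^*\, B'(x,\theta)\, \Cone(x,\theta) = B(x,\theta), \]
and applying $\theta \cdot \partial_x$ to both sides collapses the right-hand side to $B_0(x,\theta) = \Ctwo(x,\bar\theta)^* V(x,\theta) \Cone(x,\theta)$, since $B = \Pi_\theta B_0$ inverts $\theta \cdot \partial_x$.

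For $\Cone(x,\theta)$ the equation $\theta \cdot \partial_x \Cone = \tilde{A}^{(1)}(x,\theta)\Cone$ is immediate from \eqref{eq:dbar-two}. For $\Ctwo(x,\bar\theta)^*$ I would first substitute $\bar\theta$ for $\theta$ in \eqref{eq:dbar-two} to obtain $\bar\theta \cdot \partial_x \Ctwo(x,\bar\theta) = \tilde{A}^{(2)}(x,\bar\theta)\Ctwo(x,\bar\theta)$, and then take the Hermitian adjoint. Using that the real partial derivatives $\partial_k$ commute with complex conjugation, so that $\theta \cdot \partial_x A^* = (\bar\theta \cdot \partial_x A)^*$ for any matrix-valued $A$, this yields
\[ \theta \cdot \partial_x \Ctwo(x,\bar\theta)^* = \Ctwo(x,\bar\theta)^*\,\tilde{A}^{(2)}(x,\bar\theta)^*. \]
Because $\mu_2$ and $b_2$ are real, $(\bar\theta \cdot \partial_x)^2 \mu_2^{-1} = \overline{a_2}$, so the adjoint has the explicit form $\tilde{A}^{(2)}(x,\bar\theta)^* = \begin{pmatrix} 0 & b_2 \\ -a_2 & 0 \end{pmatrix}$.

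After the Leibniz expansion and cancellation of $\Ctwo(x,\bar\theta)^*$ and $\Cone(x,\theta)$ (which are invertible by construction), what remains is a linear matrix equation relating $\theta \cdot \partial_x B'$, $\tilde{A}^{(2)}(x,\bar\theta)^* B'$, $B' \tilde{A}^{(1)}(x,\theta)$ and $V(x,\theta)$. Substituting the explicit form of $B'$ from Proposition~\ref{prop:b-prime} together with $\tilde{A}^{(1)}(x,\theta) = \begin{pmatrix}0 & -a_1\\ b_1 & 0\end{pmatrix}$, the left-hand side becomes a $2\times 2$ matrix whose entries can be multiplied out by hand; equating each entry to the corresponding entry of $V$ from Lemma~\ref{lem:dbar-h2} produces the four scalar relations claimed.

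These identities are first obtained only for $\theta \in Z_0$, the regime in which Proposition~\ref{prop:b-prime} is available. To extend them to all of $Z$, I would observe that $b_{12}, b_{21}, b_{22}$ are $\theta$-independent and that both sides of each of the four equations are homogeneous polynomials in $\theta$ of matching degree ($0$, $1$, $1$, and $2$ respectively). Since $Z_0$ is a $3$-real-dimensional subset of the irreducible complex quadric $Z$, it is Zariski-dense in $Z$, so the polynomial identities propagate automatically.

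\emph{Main obstacle.} The delicate point is converting the $\bar\partial$-equation \eqref{eq:dbar-two} into the equation needed for $\Ctwo(x,\bar\theta)^*$: the original equation only directly governs $\bar\theta \cdot \partial_x$ acting on $\Ctwo(x,\bar\theta)$, not $\theta \cdot \partial_x$, and it is only by taking the Hermitian adjoint -- which exchanges $\theta$ and $\bar\theta$ via the real-differentiation identity above -- that the $\theta \cdot \partial_x$-equation is obtained. Everything after that is routine matrix algebra, and the passage from $Z_0$ to $Z$ is a brief dimension count.
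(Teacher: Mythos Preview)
Your approach is correct and essentially matches the paper's own proof: differentiate the relation $\Ctwo(x,\bar\theta)^* B'(x,\theta)\Cone(x,\theta)=B(x,\theta)$ along $\theta\cdot\partial_x$, use \eqref{eq:dbar-two} and its Hermitian adjoint to obtain the matrix identity $\theta\cdot\partial_x B' + \tilde A^{(2)}(x,\bar\theta)^* B' + B'\tilde A^{(1)}(x,\theta) = V$, and then read off entries using Proposition~\ref{prop:b-prime}. Your extension from $Z_0$ to $Z$ via Zariski density is a mildly more abstract rephrasing of the paper's direct scaling argument (rescale $\theta$ by a complex number so that $\theta_3$ becomes real and invoke homogeneity of each equation), but both rely on the same homogeneity observation.
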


To prove Proposition~\ref{prop:b-prime}, we have to parametrize the space $Z_0$ to extract information from equation \eqref{eq:transformed-h}.
Let \[\xi(t) = \left((1/2) ( t - t^{-1}), (i/2)(t+t^{-1}), 1\right)^T.\]
Then $Z_0 = \{ r\xi(t) \colon r \in \RR_+, t \in \CC\setminus\{0\}\}$.

From the homogeneity of $V$ and $C$, we can deduce the homogeneity of the matrix entries of $B'$.
\begin{lemma}\label{lem:homogeneity}
    The coefficients $B_{ij}'(x,\theta)$ are positive homogeneous of degree $i + j -3$ in $\theta$,
    \begin{align*}
        B_{ij}'(x,r\theta) = r^{i+j-3}B_{ij}'(x,\theta), \quad r > 0.
    \end{align*}
\end{lemma}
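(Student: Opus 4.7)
The claim is a purely algebraic consequence of the homogeneity properties of the three building blocks of $B'$: the matrix $V(x,\theta)$ from Lemma~\ref{lem:dbar-h2}, the solution $\tilde{C}^{(i)}(x,\theta)$ of the transport equation~\eqref{eq:dbar-two}, and the Fourier multiplier $\Pi_\theta$. My plan is to track each of these separately and then combine.

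\textbf{Step 1 (homogeneity of $V$).} Reading off the formula in Lemma~\ref{lem:dbar-h2}: the $(1,1)$ entry does not depend on $\theta$; the off-diagonal entries are of the form $c(x)\,(\theta \cdot \partial_x) b_j$, hence linear in $\theta$; and $v_{22}$ involves $a_j = (\theta\cdot\partial_x)^2 \mu_j^{-1}$, hence homogeneous of degree $2$ in $\theta$. Writing $D_r \coloneqq \diag(1,r)$, this packages into
\[
V(x,r\theta) \;=\; D_r\, V(x,\theta)\, D_r, \qquad r>0.
\]

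\textbf{Step 2 (homogeneity of $B_0$).} The prescribed homogeneity of $\tilde{C}^{(i)}$ reads $\tilde{C}^{(i)}(x,r\theta) = D_r^{-1}\tilde{C}^{(i)}(x,\theta)$, and since $r>0$ we have $\overline{r\theta}=r\overline{\theta}$, so the same identity applies to $\tilde{C}^{(2)}(x,\overline{\theta})$. Taking adjoints gives $\tilde{C}^{(2)}(x,r\overline{\theta})^* = \tilde{C}^{(2)}(x,\overline{\theta})^* D_r^{-1}$. Substituting these into \eqref{eq:b-zero} and using Step~1, the three factors of $D_r^{\pm 1}$ telescope and
\[
B_0(x,r\theta) \;=\; \tilde{C}^{(2)}(x,\overline{\theta})^* D_r^{-1}\, D_r V(x,\theta) D_r\, D_r^{-1}\tilde{C}^{(1)}(x,\theta) \;=\; B_0(x,\theta),
\]
i.e.\ $B_0$ is homogeneous of degree $0$ in $\theta$.

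\textbf{Step 3 ($\Pi_\theta$ and $B$).} The symbol of $\Pi_\theta$ is $\eta\mapsto (i\eta\cdot\theta)^{-1}$, which is positive homogeneous of degree $-1$ in $\theta$; thus $\Pi_{r\theta} = r^{-1}\Pi_\theta$ as an operator. Combined with Step~2 this gives $B(x,r\theta) = r^{-1} B(x,\theta)$.

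\textbf{Step 4 (conclude).} Using once more the homogeneity of $\tilde{C}^{(i)}$ (and hence of their inverses, which pick up a $D_r$ on the appropriate side), definition~\eqref{eq:b-prime} yields
\[
B'(x,r\theta) \;=\; D_r\,(\tilde{C}^{(2)}(x,\overline{\theta})^*)^{-1} \cdot r^{-1} B(x,\theta) \cdot \tilde{C}^{(1)}(x,\theta)^{-1} D_r \;=\; r^{-1} D_r\, B'(x,\theta)\, D_r.
\]
Entry by entry this is $B'_{ij}(x,r\theta) = r^{-1+\delta_{i,2}+\delta_{j,2}} B'_{ij}(x,\theta) = r^{i+j-3} B'_{ij}(x,\theta)$, which is the claim.

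I do not anticipate any real obstacle: every step is a bookkeeping computation, and the only slightly subtle point is keeping track of where the diagonal matrices $D_r$ sit after taking adjoints and inverses (in particular using $r\in\RR_+$ so that $\overline{r\theta}=r\overline\theta$, and that $D_r$ is real so $D_r^*=D_r$).
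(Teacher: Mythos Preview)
Your proof is correct and is precisely the computation the paper has in mind: the paper states only that the homogeneity of $B'$ ``can [be] deduce[d]'' from that of $V$ and $\tilde C^{(i)}$ and then records the lemma without further argument, so your Steps~1--4 simply fill in the omitted bookkeeping. The one cosmetic point is that the paper writes $B_0,B,B'$ in the rotated coordinates $y$ while the lemma is phrased in $x$; since the change of coordinates uses only the \emph{normalized} vectors $\Re\theta/|\Re\theta|$, $\Im\theta/|\Im\theta|$, $\ell_0/|\ell_0|$, it is invariant under $\theta\mapsto r\theta$ and does not affect your argument.
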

\begin{lemma}\label{lem:holomorphy}
    $\bar\partial_t B'(y,r\xi(t)) = 0$ for $t \in \CC\setminus\{0\}$.
\end{lemma}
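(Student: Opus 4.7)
The plan is to deduce $\bar\partial_t B'(y, r\xi(t)) = 0$ by tracing $\bar\partial_t$ through the three factors in
\[
B'(y,\theta) = (\Ctwo(y,\bar\theta)^*)^{-1}\,\Pi_\theta B_0(y,\theta)\,\Cone(y,\theta)^{-1},
\]
with $\theta = r\xi(t)$. Since $\xi$ is holomorphic on $\CC \setminus \{0\}$ we have $\bar\partial_t \theta = 0$, whereas $\bar\theta = \overline{r\xi(t)}$ is antiholomorphic in $t$.

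First, I would note that $V(y,\theta)$, as presented in Lemma~\ref{lem:dbar-h2}, is polynomial in $\theta$, being built from $a_i = (\theta\cdot\partial_x)^2 \mu_i^{-1}$, from $b_i$ independent of $\theta$, and from $(\theta\cdot\partial_x) b_i$. Hence $V(y, r\xi(t))$ is a Laurent polynomial in $t$, and $\bar\partial_t V(y, r\xi(t)) = 0$.

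Next, I would argue that the $\tilde C^{(i)}(y,\theta)$ provided by \cite{Eskin}*{Theorem 2.1} can be chosen so that $\tilde C^{(1)}(y, r\xi(t))$ is holomorphic and $\tilde C^{(2)}(y,\overline{r\xi(t)})$ is antiholomorphic in $t$. Indeed, $\tilde A^{(i)}$ is $\theta$-independent, so applying $\bar\partial_t$ to $(\theta\cdot\partial_x)\tilde C^{(1)} = \tilde A^{(1)} \tilde C^{(1)}$ and using $\bar\partial_t(\theta\cdot\partial_x) = 0$ yields that $\bar\partial_t \tilde C^{(1)}(y,r\xi(t))$ solves the same homogeneous linear system; choosing the Cauchy--Pompeiu data holomorphically in $t$ then forces $\bar\partial_t\tilde C^{(1)} = 0$ by uniqueness in the $\bar\partial$-construction. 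Applying the same reasoning with $\bar\theta$ in place of $\theta$, conjugate transposition of the resulting antiholomorphic $\tilde C^{(2)}(y,\bar\theta)$ gives $\bar\partial_t \Ctwo(y,\bar\theta)^* = 0$. Consequently $\bar\partial_t B_0(y,r\xi(t)) = 0$.

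For the middle factor, the Fourier multiplier $\Pi_\theta$ has symbol $(i\eta \cdot r\xi(t))^{-1}$, which, for each fixed $\eta$, is rational in $t$ with only two poles in $\CC^*$; away from these poles $\bar\partial_t$ of the symbol vanishes, so $\Pi_\theta$ commutes with $\bar\partial_t$ on functions smooth in $t$. Therefore
\[
\bar\partial_t B(y,r\xi(t)) = \Pi_{r\xi(t)} \bar\partial_t B_0(y,r\xi(t)) = 0,
\]
and $B'$ is the product of three factors each holomorphic in $t$, yielding $\bar\partial_t B'(y,r\xi(t)) = 0$.

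The main obstacle is the second step: verifying that the solutions $\tilde C^{(i)}$ produced by \cite{Eskin}*{Theorem 2.1} can in fact be chosen with holomorphic (resp.~antiholomorphic) parameter dependence on $t$ when $\theta = r\xi(t)$. This amounts to tracking the parameter in the $\bar\partial$-inverse formulae underlying Eskin's construction, using that the driving operator $r\xi(t)\cdot\partial_x$ depends holomorphically on $t$ while the coefficient $\tilde A^{(i)}$ is $t$-independent.
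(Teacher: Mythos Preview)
Your argument has a genuine gap at the two places you flagged as delicate, and it is the same gap in both: the operator $\Pi_\theta$ is \emph{not} holomorphic in $t$. The symbol $(i\eta\cdot r\xi(t))^{-1}$ is singular on the real-codimension-$2$ set $\{\eta:\eta\cdot\xi(t)=0\}$, and as $t$ varies this set moves; exactly as $\bar\partial_z(1/z)=\pi\delta_0$ is not zero, the distribution-valued map $t\mapsto \Pi_{\xi(t)}$ has $\bar\partial_t\Pi_{\xi(t)}\neq 0$ on generic inputs. Eskin's Lemma~6.1 (cited in the paper) says precisely that $(\bar\partial_t\Pi_{\xi(t)})f=0$ holds \emph{only} under the moment condition $\int_{\RR^2} f\,dy_1\,dy_2=0$. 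Since Eskin's construction of $\tilde C^{(i)}$ in Theorem~2.1 is itself built from iterated applications of $\Pi_\theta$, there is no reason to expect $\tilde C^{(i)}(y,\xi(t))$ to be holomorphic in $t$, and your ``uniqueness in the $\bar\partial$-construction'' does not hold: the equation $(\theta\cdot\partial_x)W=\tilde A^{(i)}W$ has an infinite-dimensional solution space (any $\tilde C^{(i)} g(\theta\cdot x)$ works), so knowing that $\bar\partial_t\tilde C^{(i)}$ is again a solution does not force it to vanish.

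The paper's proof takes a quite different route that avoids any holomorphy claim for $\tilde C^{(i)}$. It records the failure of holomorphy through the matrices $M_1=(\Cone)^{-1}\bar\partial_t\Cone$ and $M_2=\bar\partial_t((\Ctwo)^*)\,((\Ctwo)^*)^{-1}$, so that $\bar\partial_t B_0 = M_2 B_0 + B_0 M_1$ (here only the holomorphy of $V$ in $t$ is used). The key step is then to invoke the identity \eqref{eq:transformed-h}---which encodes $\Lambda^{(1)}=\Lambda^{(2)}$ and which your argument never uses---with $g_1,g_2$ the standard basis vectors to get $\int_{\RR^2} B_0\,dy_1\,dy_2=0$; this is exactly the hypothesis of Eskin's Lemma~6.1, yielding $(\bar\partial_t\Pi_{\xi(t)})B_0=0$. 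Because $(\xi(t)\cdot\partial_y)M_j=0$ (differentiate \eqref{eq:dbar-two} in $\bar t$), the $M_j$ commute through $\Pi_{\xi(t)}$, so $\bar\partial_t B = M_2 B + B M_1$ as well. Conjugating by $(\Ctwo)^*$ and $\Cone$ to pass to $B'$ then makes the $M_j$-terms cancel algebraically, giving $\bar\partial_t B'=0$. The moral: holomorphy of $B'$ is not inherited from the factors but emerges from a cancellation that ultimately rests on the DtN hypothesis.
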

\begin{proof}
    By homogeneity, we may assume that $r = 1$.
    First, we will calculating $\bar\partial_t B_0(y,\xi(t))$.
    For this, we set
    \begin{align*}
        M_1(y,t) &= \Cone(y,\xi(t))^{-1} \bar\partial_t \Cone(y,\xi(t)),\\
        M_2(y,t) &= \bar\partial_t (\Ctwo(y,\overline{\xi(t)})^*) \,(\Ctwo(y,\overline{\xi(t)})^*)^{-1},
    \end{align*}
    then obtain
    \begin{align*}
        \bar\partial_t B_0(y,\xi(t)) &= \bar\partial_t (\Ctwo(y,\overline{\xi(t)})^*) \, V(y,\xi(t)) \Cone(y,\xi(t))\\
        &\phantom{=} + \Ctwo(y,\overline{\xi(t)})^* \, \bar\partial_t V(y,\xi(t)) \Cone(y,\xi(t))\\
        &\phantom{=} + \Ctwo(y,\overline{\xi(t)})^* \, V(y,\xi(t)) \bar\partial_t \Cone(y,\xi(t))\\
        &= M_2(y,t) B_0(y,\xi(t)) + B_0(y,\xi(t)) M_1(y,t).
    \end{align*}
    The second summand vanishes because $V(y,\xi(t))$ is holomorphic in $t \in \CC\setminus\{0\}$.

    We now claim that
    \begin{align}\label{eq:tbar-b}
        \bar\partial_t B(y,\xi(t)) = M_2(y,t) B(x,\xi(t)) + B(y,\xi(t)) M_1(y,t).
    \end{align}
    If we choose $g_1$ and $g_2$ the basis vectors, then it follows from \eqref{eq:transformed-h} that
    \begin{align*}
        \int_{\RR^2} B_0(y,\theta) dy_1 dy_2 = 0, \qquad y_1 \in \RR
    \end{align*}
    and together with Lemma 6.1 from Eskin~\cite{Eskin} this implies that $(\bar\partial_t \Pi_{\xi(t)}) B_0(y,\xi(t))$ vanishes for $t \in \CC\setminus\{0\}$ and hence
    \begin{align*}
        \bar\partial_t B(y,\xi(t))) &= (\bar\partial_t \Pi_{\xi(t)}) B_0(y,\xi(t)) + \Pi_{\xi(t)} (\bar\partial_t B_0(y,\xi(t))\\
        &= \Pi_{\xi(t)} \big( M_2(y,t) B_0(y,\xi(t)) + B_0(y,\xi(t)) M_1(y,t) \big).
    \end{align*}
    From the definition of $\Cone$ and $\Ctwo$, we see that
    \begin{align*}
        (\xi(t) \cdot \pa_y) M_j(y,t) = 0, \text{ for } j=1,2.
    \end{align*}
    This implies (cf. \cite{Eskin}*{p.~527}) that
    \begin{align*}
        M_j(y,t)\Pi_{\xi(t)}B_0(y,\xi(t)) - \Pi_{\xi(t)} \big(M_j(y,t) B_0(x,\xi(t))\big) = 0.
    \end{align*}
    Hence, we obtain the claimed equality~\eqref{eq:tbar-b}.

    Finally, we calculate
    \begin{align*}
        \bar\partial_t B'(y,\xi(t)) &= \bar\partial_t \left( (\Ctwo(y,\overline{\xi(t)})^*)^{-1} B(y,\xi(t)) \Cone(y,\xi(t))^{-1}\right)\\
        &= \bar\partial_t (\Ctwo(y,\overline{\xi(t)})^*)^{-1} \, B(y,\xi(t)) \, \Cone(y,\xi(t))^{-1}\\
        &\phantom{=} + (\Ctwo(y,\overline{\xi(t)})^*)^{-1} \, \bar\partial_t B(y,\xi(t)) \, \Cone(y,\xi(t))^{-1}\\
        &\phantom{=} + (\Ctwo(y,\overline{\xi(t)})^*)^{-1} \, B(y,\xi(t)) \, \bar\partial_t \Cone(y,\xi(t))^{-1}\\
        &= -(\Ctwo(y,\overline{\xi(t)})^*)^{-1} M_2(x,t)  B(y,\xi(t)) \, \Cone(y,\xi(t))^{-1}\\
        &\phantom{=} + (\Ctwo(y,\overline{\xi(t)})^*)^{-1} \big( M_2(y,t) B(y,\xi(t)) + B(y,\xi(t))M_1(y,t) \big) \Cone(y,\xi(t))^{-1}\\
        &\phantom{=} - (\Ctwo(y,\overline{\xi(t)})^*)^{-1} \, B(y,\xi(t)) M_1(y,t) \, \Cone(y,\xi(t))^{-1}\\
        &= 0.
    \end{align*}
\end{proof}
\begin{proof}[Proof of Proposition~\ref{prop:b-prime}]
    Write
    \begin{align*}
        B'(x,\theta) = \begin{pmatrix} B_{11}'(x,\theta) & B_{12}'(x,\theta) \\ B_{21}'(x,\theta) & B_{22}'(x,\theta)\end{pmatrix}.
    \end{align*}
    For $i,j \in \{1,2\}$, we have by continuity of $B'(x,\theta)$ that
    \begin{align*}
        \sup_{|\theta| = 1} |B_{ij}'(x,\theta)| < \infty
    \end{align*}
    for $\theta \in Z_0$.

    First, we consider the entries $B_{12}'$ and $B_{21}'$. From Lemma \ref{lem:homogeneity} it follows that
    \begin{align*}
        |B_{12}'(x,\xi(t))| &= |B_{12}'(x,\xi(t)/|\xi(t)|)| < C < \infty,\\
        |B_{21}'(x,\xi(t))| &= |B_{21}'(x,\xi(t)/|\xi(t)|)| < C' < \infty.
    \end{align*}
    By Lemma~\ref{lem:holomorphy} these are therefore bounded entire functions in $t$, hence Liouville's theorem
    implies that $B_{12}'$ and $B_{21}'$ are constant in $\xi(t)$.
    
    For $B_{11}'$ we have
    \begin{align*}
        |B_{11}'(x,\xi(t))| = |B_{11}'(x,\xi(t)/|\xi(t)|)| \cdot |\xi(t)|^{-1}.
    \end{align*}
    Therefore, $B_{11}'(x,\xi(t))$ is entire and tends to $0$ for $|t|\to \infty$ and $|t| \to 0$.
    This implies that $B_{11}'(x,\xi(t)) = 0$.

    By the same arguments, we obtain for $B_{22}'(x,\xi(t))$ that
    \begin{align*}
        B_{22}'(x,\xi(t)) = B_{22,-1}'(x) t^{-1} + B_{22,0}'(x)  + B_{22,1}'(x) t.
    \end{align*}
    Setting
    \begin{align*}
        b_{22}'(x) = \begin{pmatrix} B_{22,1}'(x) - B_{22,-1}'(x) \\ -i (B_{22,1}'(x) + B_{22,-1}'(x)) \\ B_{22,0}'(x)\end{pmatrix},
    \end{align*}
    we obtain $B'(x,\theta) = b_{22}(x) \cdot \theta$.

\end{proof}
\begin{proof}[Proof of Proposition~\ref{prop:main}]
    Applying $\theta \cdot \partial_x$ to \eqref{eq:b-prime}, we have that
    \begin{align*}
        \theta \cdot \partial_x B'(x,\theta) + (A^{(2)})^* B'(x,\theta) + B'(x,\theta) A^{(1)} = V(x,\theta).
    \end{align*}
    Using Proposition~\ref{prop:b-prime}, we obtain the claim for $x \in B_R$ and $\theta \in Z_0$.

    In the case that $\Im \theta_3 \neq 0$,
    we observe that the first equality is independent of $\theta$, the second and third are linear in $\theta$ and the last one is quadratic in $\theta$.
    Hence, we can multiply by a complex number to reduce the case $\Im \theta_3 \neq 0$ to $\Im \theta_3 = 0$.
\end{proof}

\section{From Bounded Domains to Infinite Cylinders}\label{sec:cylinder}
Let as before $\Omega \subseteq \RR^2 \times \RR_{>0}$. We recall the corresponding assumptions. There exists  $G\subseteq \RR^2$ such that
\begin{align*}
	\Omega \cap (\RR^3 \setminus [-R,R]^3) = (\RR \setminus [-R,R]) \times G 
\end{align*}
and we suppose that there exists a compact set $K \subseteq \overline{\Omega}$ such that 
\begin{equation}\label{eq:lame-infty}
    \left\{ \begin{aligned} 
        \mu_1(x) &= \mu_2(x) =: \mu(x_2,x_3) \quad \text{for } x = (x_1,x_2,x_3) \in \overline{\Omega} \setminus K \\
        \lambda_1(x) &= \lambda_2(x) =: \lambda(x_2,x_3) \quad \text{for } x = (x_1,x_2,x_3) \in \overline{\Omega} \setminus K.
    \end{aligned} \right.
\end{equation} 
Moreover, we assume that $\Gamma_1 = \partial \Omega \cap (\RR^2 \times \{0\})$ is bounded.
Let  $k \in \RR$ be fixed. We consider the boundary value problem 
\begin{align}\label{eq:bvp_Poisson_cyl} 
	\left\{ \begin{aligned}
	(L^{(i)} -k^2) u  &= f \text{ in } \Omega \\
	u &= g  \text{ on } \Gamma_1 \\
	C^{(i)} u &= 0  \text{ on } \Gamma_0  ,
\end{aligned} \right. 
\end{align}
for suitable $f : \Omega \to \CC^3$ and $g : \Gamma_1 \to \CC^3$. In order to ensure unique solvability, we have to impose the correct radiation conditions and consider exponentially weighted Sobolev spaces. We define for $\beta  \in \RR$ and $s \in \RR$ the spaces 
\begin{align*}
	H^{s}_\beta (\Omega;\CC^3) & \coloneqq \Bigr\{ f  \in H^s_{\mathrm{loc}} (\Omega;\CC^3) : 
		e^{- \beta_1 |x_1| }  \chi (x_1) f(x) \in H^s(\Omega;\CC^3) \Bigr\} , \\
	H^s_\beta (\partial \Omega;\CC^3) & \coloneqq \Bigr\{ g  \in H^s_{\mathrm{loc}} (\partial \Omega;\CC^3) : 
		e^{- \beta_1 |x_1| }  \chi (x_1)  g(x) \in H^s(\partial \Omega;\CC^3) \Bigr\} . 
\end{align*}
Here  $\chi \in C^\infty(\RR)$ shall satisfy $\chi = 0$ in some neighbourhood of $0$ and  $\chi  =1$ outside some compact set. For $s=0$ we write $H^0_{\beta}(\Omega;\CC^3)=L^2_\beta(\Omega;\CC^3)$. Note that 
\[ H^s_\beta (\partial \Omega;\CC^3) = H^{-s}_{-\beta} (\partial \Omega;\CC^3)^* . \]
Without a major effort we may generalise the definitions in Section \ref{sec:dtn} and we obtain the spaces
\[ 
     H^s_\beta(\Gamma_0; T\Gamma_0), H^s_{00,\beta}(\Gamma_0, T \Gamma_0) \quad \text{and} \quad  H^s_\beta(\Gamma_0; T^\perp \Gamma_0), H^s_{00,\beta} (\Gamma_0, T^\perp  \Gamma_0) . 
\]
Now choose $\beta > 0$ sufficiently small. A function $u \in H^1_{\beta}(\Omega;\CC^3)$ will be called outgoing if it satisfies a given asymptotic behaviour on the cylindrical ends. More precisely there exist functions $U_{j,+} \in   H^{2}_\beta(\Omega;\CC^3), j=1,\ldots, n$ such that 
\begin{equation*}
    \begin{aligned}
        (L^{(i)}  - k^2) U_{j,+} &= 0 , \text{ in } \Omega \setminus \tilde K \\
        U_{j,+} &= 0 , \text{ on } \partial \Omega ,
    \end{aligned}
\end{equation*}
for a suitable compact set $\tilde K \subseteq \overline{\Omega}$ and $i=1,2$. By definition we assume for an outgoing function $H^1_{\beta}(\Omega;\CC^3)$ to have the following asymptotic behaviour 
\begin{align}\label{eq:bvp_radiation}
	u  -  \sum_{j=1}^{n} c_j U_{j,+} \in H^1_{-\beta} (\Omega;\CC^3) 
\end{align}
for coefficients $c_j \in \CC$, $j=1,\ldots, n$. A similar assertion holds true for incoming functions, where we will  replace $U_{j,+}$ by incoming waves $U_{j,-}$. The precise definition is given in the appendix. 

In what follows we assume that the boundary value problem \eqref{eq:bvp_Poisson_cyl} with $f=0$ and $g=0$ has a unique outgoing solution and a unique incoming solution, namely $u=0$. Then we obtain the following result:
\begin{lemma}\label{lemma:solvability_bvp}
 	Let $f \in H^{1}_{\beta}(\Omega;\CC^3)^*$ and $g  \in H^{1/2}_{-\beta} (\Gamma_1;T^\perp \Gamma_1) \oplus H^{1/2}_{00,-\beta}(\Gamma_1;T\Gamma_1)$. Then the boundary value problem \eqref{eq:bvp_Poisson_cyl} has a unique outgoing and a unique incoming solution in $H^1_{\beta}(\Omega;\CC^3)$. 
\end{lemma}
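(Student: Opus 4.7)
My plan is to reduce to the case $g = 0$ and then invoke the Fredholm alternative in the outgoing/incoming spaces.

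\emph{Reduction to homogeneous boundary data.} First, by standard trace extension in the weighted spaces (which is possible because $\Gamma_1$ is bounded and $\Gamma_0$ has Lipschitz boundary), I would choose $G \in H^{1}_{-\beta}(\Omega;\CC^3)$ with $G|_{\Gamma_1} = g$ and $C^{(i)} G = 0$ on $\Gamma_0$. The function $v \coloneqq u - G$ then satisfies
\[(L^{(i)} - k^2) v = f - (L^{(i)} - k^2) G \in H^{1}_{\beta}(\Omega;\CC^3)^*, \quad v|_{\Gamma_1} = 0, \quad C^{(i)} v|_{\Gamma_0} = 0,\]
and is outgoing (resp.\ incoming) if and only if $u$ is, because $G$ is exponentially decaying and therefore contributes nothing to the asymptotic behaviour on the cylindrical ends.

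\emph{Setting up the Fredholm problem.} Let $\mathcal{X}^+$ be the Hilbert space of functions of the form $w = \sum_{j=1}^{n} c_j U_{j,+} + w_0$ with $c_j \in \CC$ and $w_0 \in H^{1}_{-\beta}(\Omega;\CC^3)$ satisfying $w_0|_{\Gamma_1} = 0$ and $C^{(i)} w_0|_{\Gamma_0} = 0$. I would show that
\[T^+ : \mathcal{X}^+ \to H^{1}_{\beta}(\Omega;\CC^3)^*, \qquad T^+ w = (L^{(i)} - k^2) w,\]
is a Fredholm operator of index zero. Granting this, the standing hypothesis that the homogeneous outgoing problem has only $u=0$ as a solution gives $\ker T^+ = 0$, so that $T^+$ is surjective; setting $u = v + G$ then produces the required outgoing solution. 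The incoming case is completely analogous, with $U_{j,-}$ in place of $U_{j,+}$.

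\emph{The main obstacle.} Establishing the Fredholm property of $T^+$ is the technical heart of the argument. On the cylindrical part $(\RR\setminus[-R,R])\times G$ the operator $L^{(i)} - k^2$ is translation-invariant in $x_1$, so after a partial Fourier transform in $x_1$ it becomes an operator-valued polynomial $\lambda \mapsto A(\lambda) - k^2$ on the cross-section $G$ with simply supported (or soft clamped) boundary conditions. Its characteristic values form a discrete subset of $\CC$, only finitely many of which lie on $\{\Re \lambda = 0\}$; these are precisely the wavenumbers of the propagating modes $U_{j,\pm}$ constructed in the appendix. Choosing $\beta > 0$ small enough that no other characteristic values lie in the strip $\{|\Re \lambda|\le \beta\}$, the standard Kondratiev-type theory for elliptic problems on cylindrical ends yields that the natural operator from $H^1_{-\beta}$ to the dual of $H^1_{\beta}$ is Fredholm, and that supplementing its domain by the span of the outgoing modes exactly fills the cokernel so that the resulting index is zero. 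The delicate points are verifying that the modes furnished by the appendix do account for the full cokernel and ensuring that $k^2$ does not sit at a threshold where $A(\lambda)-k^2$ has a Jordan chain at $\Re\lambda = 0$; in the latter case one adjusts the weight by an arbitrarily small amount to restore non-resonance.
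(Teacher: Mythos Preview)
Your approach is essentially the same as the paper's: the paper does not give a detailed proof but simply states that the lemma ``follows from the above assumption together with ideas in the appendix'' and, at the end of the appendix, that it is ``a consequence of the Fredholm property in \cite{KamNaz}*{Theorem 2.2}.'' Your proposal spells out precisely this strategy --- reduce to homogeneous boundary data, set up the operator on the space $H^1_{-\beta}$ augmented by the outgoing modes, invoke the Kondratiev/Nazarov--Plamenevskii Fredholm theory to get index zero, and conclude from the standing uniqueness assumption.

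One small imprecision: in the reduction step you ask for an extension $G$ with $C^{(i)}G = 0$ on $\Gamma_0$. Since $C^{(i)}$ contains the normal stress $B^{(i)}_n$, which is a natural condition in the variational formulation, you only need the essential part $G_\tau = 0$ on $\Gamma_0$ for the extension; the hypothesis $g_\tau \in H^{1/2}_{00,-\beta}(\Gamma_1;T\Gamma_1)$ is exactly what makes this possible. The natural condition is then absorbed into the right-hand side of the variational problem and causes no trouble.
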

The proof follows from the above assumption together with ideas in the appendix. Here we note that the boundary value problem will always be considered in its variational form.
In particular, for $g = (g_n , g_\tau)$ the solution $u$ of \eqref{eq:bvp_Poisson_cyl} shall satisfy  
\begin{align}
       \left\{ \begin{aligned} u_n &= g_n \text{ on } \Gamma_1 \\
        u_\tau &= g_\tau  \text{ on } \Gamma_1 \\
        u_\tau &= 0 \text{ on } \Gamma_0 , 
        \end{aligned} \right. 
\end{align}
as well as 
\begin{align}\label{eq:variation-cyl}
    \int_\Omega \scal{\sigma^{(i)} (u)}{\varepsilon (v) }_{\CC^3 \times \CC^3} \; \dd x  &= k^2 \int_{\Omega} u\;\bar{v} \; \dd x + \int_{\Omega} f \;  \bar{v} \; \dd x  ,
\end{align}
for every $v \in H^1_{-\beta}(\Omega;\CC^3)$ such that $v=0$ on $\Gamma_1$ and  $v_\tau = 0$ on $\Gamma_0$. 

Next we introduce the corresponding \DtN operator.
To this end let $g \in  H^{1/2}_{-\beta} (\Gamma_1;T^\perp \Gamma_1) \oplus H^{1/2}_{00,-\beta}(\Gamma_1;T\Gamma_1) $ and  $f =0$. Let $u \in H^{1}_{\beta}(\Omega;\CC^3) $ be the unique outgoing solution of the associated boundary value problem of
\eqref{eq:bvp_Poisson_cyl}.
The weak normal derivative of $u$, $B^{(i)} u \in H_{\beta}^{-1/2}(\partial \Omega;\CC^3)$, is again defined by Green's formula \eqref{eq:Greens_formula}, i.e. we have for all $v \in H^1_{-\beta}(\Gamma;\CC^3)$ that 
\begin{align*}
    \scal{B^{(i)} u}{v|_{\Gamma}} = \int_\Omega \scal{\sigma_i (u)}{\varepsilon (v) }_{\CC^3 \times \CC^3} \; \dd x - k^2 \int_\Omega u \overline{v} \; \dd x.
\end{align*}
We define as above 
\begin{align*}
    \Lambda^{(i)}    g  \coloneqq B^{(i)} u|_{\Gamma_1}
\end{align*}
and obtain an operator
\begin{align*}
    \Lambda^{(i)} : H^{1/2}_{-\beta} (\Gamma_1;T^\perp \Gamma_1) \oplus H^{1/2}_{00,-\beta}(\Gamma_1;T\Gamma_1)  \to H^{-1/2}_{00, \beta} (\Gamma_1;T^\perp\Gamma_1) \oplus H^{-1/2}_\beta(\Gamma_1;T\Gamma_1) . 
\end{align*}
As a next step we consider again the quadratic form
\begin{align*}
    H_\Omega (u_1 , u_2) :=  \int_\Omega \{ \lambda_2 - \lambda_1 \} \ddiv (u_1) \overline{\ddiv(u_2)} 
	+ 2\{\mu_2 - \mu_1\}  \scal{\varepsilon (u_1)}{\varepsilon(u_2)}_{\CC^3 \times \CC^3} \; \dd x.
\end{align*}
Note that the $\mu_i$ and $\lambda_i$ differ only on the compact set $K$, and thus, we have 
\begin{align*}
    H_\Omega (u_1 , u_2) &= H_K(u_1|_K, u_2|_K) \\
    &= \int_K \{ \lambda_2 - \lambda_1 \} \ddiv (u_1) \overline{\ddiv(u_2)} + 2\{\mu_2 - \mu_1\}  \scal{\varepsilon (u_1)}{\varepsilon(u_2)}_{\CC^3 \times \CC^3} \; \dd x  .
\end{align*}
The following theorem permits us to apply directly the results of the previous sections. Then Theorem \ref{thm:main1} and Theorem~\ref{thm:main2} for the infinite cylinder follows from the case of bounded domains.
Furthermore, Theorem~\ref{thm:inf-cyl} follows from \cite{EsRa}*{Theorem 1}. 
\begin{theorem}\label{th:h-K_cyl}
        If the \DtN operators are equal, $\Lambda^{(1)} = \Lambda^{(2)}$, then we have $H_K(u_1, u_2) = 0$ for solutions $u_i \in H^1(K;\CC^3)$ of 
        \begin{equation}\label{eq:L-k-K} \left\{ 
            \begin{aligned}
                    (L^{(i)} -k^2) u_i  &= 0  \text{ in } K \\
                    C^{(i)} u_i &= 0  \text{ on } \Gamma_0 . 
            \end{aligned} \right. 
        \end{equation}
\end{theorem}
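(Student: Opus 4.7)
The proof will mirror that of Lemma~\ref{lem:h-omega}, the new difficulty being the radiation condition at the infinite cylindrical end.

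The first step is to extend each $u_i$, initially defined only on $K$, to a function $\tilde u_i \in H^1_\beta(\Omega;\CC^3)$ on all of $\Omega$. I would set $\tilde u_i = u_i$ on $K$, and on $\Omega \setminus K$ let $\tilde u_i$ be the unique outgoing solution of the exterior boundary value problem with Dirichlet data $u_i|_{\partial K \cap \Omega}$, boundary condition $C^{(i)} \tilde u_i = 0$ on $\Gamma_0 \cap (\Omega \setminus K)$, and the prescribed outgoing radiation at the ends. Well-posedness is granted by the standing assumption preceding Lemma~\ref{lemma:solvability_bvp}. Crucially, since $L^{(1)} = L^{(2)}$ on $\Omega \setminus K$ (the Lam\'e parameters coincide there), this exterior problem is in fact identical for $i = 1, 2$. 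As the parameters also agree on $\Omega \setminus K$, the integrand of $H_\Omega$ is supported in $K$, and therefore $H_\Omega(\tilde u_1, \tilde u_2) = H_K(u_1, u_2)$; the task reduces to showing $H_\Omega(\tilde u_1, \tilde u_2) = 0$.

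From this point the argument parallels Lemma~\ref{lem:h-omega}. I would set $g := \tilde u_1|_{\Gamma_1}$ and let $v \in H^1_\beta(\Omega;\CC^3)$ be the outgoing solution of $(L^{(2)} - k^2) v = 0$ with $v|_{\Gamma_1} = g$ and $C^{(2)} v = 0$ on $\Gamma_0$. The combination $\tilde u_1 - v$ has zero Dirichlet trace on $\Gamma_1$ and zero tangential component on $\Gamma_0$. Pairing the equation $(L^{(2)} - k^2) \tilde u_2 = 0$ against $\tilde u_1 - v$ in the variational formulation~\eqref{eq:variation-cyl}, together with the \DtN identity $B^{(2)} v|_{\Gamma_1} = \Lambda^{(2)} g = \Lambda^{(1)} g = B^{(1)} \tilde u_1|_{\Gamma_1}$, should yield
\begin{equation*}
\int_\Omega \scal{\sigma^{(2)}(\tilde u_1)}{\varepsilon(\tilde u_2)}\,\dd x = \int_\Omega \scal{\sigma^{(1)}(\tilde u_1)}{\varepsilon(\tilde u_2)}\,\dd x,
\end{equation*}
which is precisely $H_\Omega(\tilde u_1, \tilde u_2) = 0$.

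The main obstacle will be that $\tilde u_1 - v$ is outgoing and therefore lies in $H^1_\beta$, not in the test-function space $H^1_{-\beta}$ required by the variational formulation. I would circumvent this by performing the variational identity on the truncated cylinder $\Omega_R := \Omega \cap \{|x_1| < R\}$ and letting $R \to \infty$. The contribution at the cross-section $\{|x_1| = R\}$ must then be shown to vanish in the limit, using the outgoing expansion~\eqref{eq:bvp_radiation} of $\tilde u_1, \tilde u_2,$ and $v$ into the waveguide modes $U_{j,+}$ and the symplectic-type orthogonality of these modes established in the appendix; the key point is that, through the \DtN equality, the dominant outgoing coefficients of $\tilde u_1$ and $v$ are matched and cancel when paired against $\tilde u_2$. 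A secondary technical point is that each $\tilde u_i$ may exhibit a jump of $\sigma^{(i)}(\tilde u_i) \cdot \normal$ across $\partial K \cap \Omega$; these surface contributions must be tracked through Green's identity but cancel because on $\Omega \setminus K$ both extensions solve the common operator $L - k^2$ with matching Dirichlet data.
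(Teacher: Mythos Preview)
Your extension step contains the essential gap. The function $\tilde u_1$ you build by gluing $u_1$ on $K$ to an outgoing exterior solution on $\Omega\setminus K$ is \emph{not} a solution of $(L^{(1)}-k^2)\tilde u_1=0$ on all of $\Omega$: the Neumann data $\sigma^{(1)}(u_1)\normal$ from the inside and from the outside have no reason to agree on $\partial K\cap\Omega$, so $\tilde u_1$ carries a nontrivial distributional source there. Consequently the identity you use, $B^{(1)}\tilde u_1|_{\Gamma_1}=\Lambda^{(1)}g$, is simply false: by definition $\Lambda^{(1)}g$ is the Neumann trace of the \emph{global} outgoing solution with data $g$, and your $\tilde u_1$ is not that solution. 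The same defect affects $\tilde u_2$ when you try to test it in \eqref{eq:variation-cyl}. The surface terms you flag at the end do not ``cancel because \dots\ matching Dirichlet data''; matching Dirichlet data says nothing about the Neumann jump, and that jump is precisely the obstruction. (Incidentally, the well-posedness assumption before Lemma~\ref{lemma:solvability_bvp} is for $\Omega$, not for the exterior domain $\Omega\setminus K$, so even the extension step itself is not covered by it.)

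The paper's proof sidesteps both the extension defect and your truncation-in-$R$ limit. It first invokes a Runge approximation theorem (Theorem~\ref{th:Runge}, proved via Hahn--Banach and unique continuation) to approximate $u_1$ in $H^1(K)$ by restrictions of \emph{genuine} global outgoing solutions $v_1$; for these one has $B^{(1)}v_1|_{\Gamma_1}=\Lambda^{(1)}(v_1|_{\Gamma_1})$ by definition. It then takes $v$ to be the global outgoing $L^{(2)}$-solution with the same boundary data, observes that $w:=v_1-v$ has vanishing Cauchy data on $\partial\Omega\setminus\overline K$ and solves the \emph{common} operator $(L-k^2)w=0$ on $\Omega\setminus K$, and applies unique continuation to force $w=0$ on all of $\Omega\setminus K$. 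This localizes the whole computation: $w$ is now an admissible test function for the variational problem on the \emph{compact} set $K$, so no limit $R\to\infty$ and no mode orthogonality at infinity are needed, and $H_K(v_1|_K,u_2)=0$ follows directly from Green's formula on $K$.
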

The remaining part of the section is devoted to the proof of Theorem \ref{th:h-K_cyl}. 
Its proof is mainly based on a corresponding Runge approximation theorem.
\begin{theorem}\label{th:Runge}
    Let $u \in H^1(K;\CC^3)$ be solution of \eqref{eq:L-k-K} for $i = 1$.
	Then for every $\varepsilon >0$ there exists an outgoing solution $u_\varepsilon \in H^{1}_{\beta}(\Omega;\CC^3)$ of
    \begin{equation}\label{eq:bvp_Poisson_Runge}
        \left\{ 
        \begin{aligned}
            (L^{(1)} -k^2) u_\varepsilon  &= 0  \quad\text{ in } \Omega \\
            C^{(1)} u_\varepsilon &= 0  \quad\text{ on } \Gamma_0,
        \end{aligned} \right. 
    \end{equation}
    such that
	\[ \| u - u_\varepsilon\|_{H^1(K;\CC^3)} < \varepsilon  . \]
\end{theorem}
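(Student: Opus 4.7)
The proof will follow the classical Hahn--Banach/duality approach for Runge-type approximation theorems, combined with a unique continuation argument for the Lam\'e system. Set $V \coloneqq \{u_\varepsilon|_K : u_\varepsilon \text{ is an outgoing solution of \eqref{eq:bvp_Poisson_Runge}}\} \subseteq H^1(K;\CC^3)$ and let $\mathcal{S}(K)$ denote the space of solutions $u \in H^1(K;\CC^3)$ of \eqref{eq:L-k-K}. Density of $V$ in $\mathcal{S}(K)$ is equivalent, by Hahn--Banach, to the assertion that every functional $\ell \in H^1(K;\CC^3)^*$ annihilating $V$ also annihilates $\mathcal{S}(K)$.

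Fix such an $\ell$ and associate to it a distribution $F$ supported in $K$, obtained from the Riesz representation $\ell(v) = \int_K (f_0 \cdot \bar v + f_1 : \overline{\nabla v})\,\dd x$ by setting $F \coloneqq f_0 - \nabla \cdot f_1$. Using the well-posedness assumption of Section~\ref{sec:cylinder}, applied to the adjoint problem --- which by the symmetry of the stress-strain pairing is analytically the same system with outgoing and incoming radiation conditions interchanged --- I would produce an incoming $w \in H^1_\beta(\Omega;\CC^3)$ satisfying
\begin{equation*}
(L^{(1)} - k^2) w = F \text{ in } \Omega, \qquad C^{(1)} w = 0 \text{ on } \Gamma_0, \qquad w = 0 \text{ on } \Gamma_1.
\end{equation*}

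The main obstacle is to show that $w \equiv 0$ on $\Omega \setminus K$. To this end, I would apply Green's identity to the pair $(w, u_\varepsilon)$, where $u_\varepsilon$ ranges over the outgoing solutions of \eqref{eq:bvp_Poisson_cyl} with $f = 0$ and arbitrary Dirichlet data $g$ on $\Gamma_1$ (furnished by Lemma~\ref{lemma:solvability_bvp}). The $\Gamma_0$ contributions cancel by the matching simply supported conditions, and the asymptotic flux at the cylindrical ends vanishes because one pairs outgoing with incoming modes, a Wronskian-type identity built into the radiation conditions (cf.\ the appendix). Green's identity then reduces to an expression of the form
\begin{equation*}
\ell(u_\varepsilon|_K) = \int_{\Gamma_1} (B^{(1)}w) \cdot \bar u_\varepsilon \,\dd s,
\end{equation*}
which must vanish for every admissible $g = u_\varepsilon|_{\Gamma_1}$. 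This forces $B^{(1)}w = 0$ on $\Gamma_1$; together with $w|_{\Gamma_1} = 0$, the Cauchy data of $w$ vanishes on $\Gamma_1$. Strong unique continuation for the Lam\'e system with smooth coefficients then yields $w \equiv 0$ on the connected component of $\Omega \setminus K$ adjacent to $\Gamma_1$, which by the cylindrical geometry is all of $\Omega \setminus K$.

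Finally, for $u \in \mathcal{S}(K)$ I would rewrite $\ell(u) = \int_K F \cdot \bar u\,\dd x = \int_K (L^{(1)} - k^2)w \cdot \bar u\,\dd x$ and integrate by parts on $K$. The interior term $\int_K w \cdot \overline{(L^{(1)} - k^2)u}\,\dd x$ vanishes since $u$ satisfies \eqref{eq:L-k-K}, and the boundary integral over $\partial K$ vanishes on each piece: on $\partial K \cap \Gamma_0$ by the matching $C^{(1)} = 0$ conditions, on $\partial K \cap \Gamma_1$ because $w = 0$ and $B^{(1)}w = 0$, and on the internal piece $\partial K \cap \Omega$ because the traces of $w$ and $B^{(1)}w$ from $\Omega \setminus K$ are zero. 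Hence $\ell(u) = 0$, completing the duality argument.
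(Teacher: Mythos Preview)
Your proposal is correct and takes essentially the same approach as the paper's proof: Hahn--Banach reduction, construction of an incoming solution $w$ of the adjoint problem with the annihilating functional as right-hand side, pairing against outgoing solutions with arbitrary Dirichlet data on $\Gamma_1$ to deduce $B^{(1)}w|_{\Gamma_1}=0$, unique continuation to obtain $w\equiv 0$ on $\Omega\setminus K$, and a final Green-type identity on $K$. The only cosmetic difference is that the paper carries out the last step via an approximation of $u$ by smooth $\phi_n \in C_c^\infty(\overline\Omega;\CC^3)$ and the variational formulation on $\Omega$, rather than a direct integration by parts on $K$, which sidesteps the need to make sense of $(L^{(1)}-k^2)w$ pointwise on $K$.
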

\begin{proof}
The argument is similar to the one in \cite{LiUhl}*{Lemma 3.3}.
Let $f \in H^1 (K;\CC^3)^*$ be an arbitrary functional with $f (u|_K) = 0$ for all outgoing $u \in H^{1}_{\beta}(\Omega;\CC^3)$ that satisfy \eqref{eq:bvp_Poisson_Runge}.
The assertion follows from the Hahn-Banach theorem, if we show that $f(v)=0$ for all $v \in H^1(K;\CC^3)$ such that $(L^{(1)} - k^2) v = 0$ in $\Omega$ and $C^{(1)} v =0$ on $\Gamma_0$.

We extend $f$ to a functional $  \tilde f  \in H^1_{\beta}(\Omega;\CC^3)^*$
by putting $\tilde f (u)  = f(u|_{K})$. 
Lemma~\ref{lemma:solvability_bvp} implies that there exists an incoming solution $v_0 \in H^{1}_{\beta}(\Omega;\CC^3)$ such that 
\begin{equation*} \left\{ 
        \begin{aligned}
		(L^{(1)} -  k^2) v_0  &= \tilde f   \text{ in } \Omega \\
		v_0 &= 0 \text{ on } \Gamma_1 \\
		C^{(1)} v_0 &= 0 \text{ on } \Gamma_0 .
		\end{aligned} \right. 
\end{equation*}
Note that $\tilde f = 0$ on $\Omega \setminus K$, and thus, from local elliptic regularity theorems we obtain $v_0 \in C^\infty(\overline{\Omega} \setminus K;\CC^3)$.

We show that $v_0 = 0$ on $\overline\Omega \setminus K$. For this let $g \in C_c^\infty(\partial\Omega;\CC^3)$ and consider the outgoing function $w_g \in H^1_{\beta}(\Omega;\CC^3)$, which satisfies
\begin{equation*}
        \left\{ 
        \begin{aligned}
        (L^{(1)} -  k^2) w_g  &= 0   \text{ in } \Omega \\
        w_g &= g \text{ on } \Gamma_1 \\
		C^{(1)} w_g &= 0 \text{ on } \Gamma_0 .
		\end{aligned} \right. 
\end{equation*}
If we assume that $w_g \in H^1_{-\beta}(\Omega;\CC^3)$, then the variational formulation \eqref{eq:variation-cyl} implies
\begin{align*}
    0 &= \int_{\Omega} \scal{\sigma(w_g)}{\varepsilon(v_0)} \; \dd x - k^2 \int_\Omega w_g \overline{v_0} \; \dd x. 
\end{align*}
However taking into account the choice of the radiation condition (cf.\ the appendix) the equality holds true for all $w_g$. Likewise we obtain
\begin{align*}
    0 &= \int_{\Omega} \scal{\sigma(v_0)}{\varepsilon(w_g)} \; \dd x - k^2 \int_\Omega v_0 \overline{w_g} \; \dd x\\
    &= \tilde f(w_g) + \scal{B^{(1)} v_0}{w_g|_{\partial \Omega}}_{\partial \Omega}\\
    &= \scal{B^{(1)} v_0|_{\Gamma_1} }{g|_{\Gamma_1}} .
\end{align*}
Since $g$ was chosen  arbitrarily  we have $B^{(1)} v_0 = 0$ on $\Gamma_1$.
Hence, as $(L^{(1)} -  k^2) v_0 = 0$ on $\Omega \setminus K$ we have also  $v_0 = 0$ on $\overline\Omega \setminus K$ by unique continuation, cf. \cite{Angetal}.
As a particular consequence we obtain $v_0 = 0$ on $\partial K  \setminus \Gamma_0$. 

Now let $u \in H^1(K;\CC^3)$ such that $(L^{(1)} - k^2) u = 0$ and $C^{(1)} u  =0$. Then the variational formulation applied to $u$ gives us that 
\begin{align*}
    0 &=  \int_K \scal{\sigma(u)}{\varepsilon(v_0)} \; \dd x - k^2 \int_K v_0 \overline{u} \; \dd x.
\end{align*}
As a next step we choose a sequence $\phi_n \in C_c^\infty(\overline{\Omega};\CC^3)$ with 
$\phi_n|_K \to u \in H^1(K;\CC^3)$ and $\phi_n = 0$ on $\Gamma_0$. Then the previous considerations imply
\begin{align*}
    0 &= \int_K \scal{\sigma(v_0)}{\varepsilon(u)} \; \dd x - k^2 \int_K v_0 \overline{u} \; \dd x\\
    &= \lim_{n \to \infty} \int_\Omega \scal{\sigma(v_0)}{\varepsilon(\phi_n)}  \; \dd x - k^2 \int_\Omega v_0 \overline{\phi_n} \; \dd x\\
    &= \lim_{n \to \infty} \tilde f (\phi_n) + \scal{B^{(1)} v_0|_{\Gamma_1}}{\phi_n|_{\Gamma_1}} \\
    &= \lim_{n \to \infty} \tilde f(\phi_n) = f(u) . 
\end{align*}
\end{proof}
Now, we can prove the main result of this section:
\begin{proof}[Proof of Theorem~\ref{th:h-K_cyl}]
Using Theorem~\ref{th:Runge}, it suffices to prove the following:
if $u_2$ is a solution of \eqref{eq:L-k-K} and 
$v_1 \in H^1(\Omega;\CC^3)$ an outgoing solution of \eqref{eq:bvp_Poisson_Runge},
then we have that $H_K(v_1|_K, u_2) = 0$.

We choose  $v \in H^1(\Omega;\CC^3)$ outgoing such that
\begin{equation*} \left\{ 
    \begin{aligned}
        (L^{(2)} -k^2) v  &= 0  &\text{in }& \Omega \\
        v &= v_1|_{\Gamma_1} &\text{on }& \Gamma_1 \\
        C^{(2)} v &= 0  &\text{on }& \Gamma_0.
    \end{aligned} \right. 
\end{equation*}
Then we have $B^{(1)} v_1|_{\Gamma_1}  = B^{(2)} v|_{\Gamma_1}$. Since $L^{(1)} = L^{(2)} = L $ on $\Omega \setminus K$ and $B^{(1)} = B^{(2)} = B$ on $\partial \Omega \setminus K$ we have for  $w \coloneqq v_1 - v$ that 
\[
    (L - k^2)w  = 0 \text{ on } \Omega \setminus K, \quad w|_{\partial \Omega \setminus \overline{K}} = 0 , \quad 
	B w|_{ \partial \Omega \setminus \overline{ K}} = 0 .
\]
The unique continuation theorem assures that $w=0$ on $\Omega \setminus K$, cf. \cite{Angetal}. 
In particular we have $w|_{\partial K \setminus \Gamma_0} =0$ and $w_{\tau} = 0$ on $\Gamma_0$ and we obtain with $\supp u_2 \subset K$ as before
\begin{align*}
    0 &= \int_{K} \scal{\sigma_2(w)}{\varepsilon(u_2)}_{\CC^3 \times \CC^3} \; \dd x - k^2 \int_K w\overline{u_2} \; \dd x\\
	&= \int_K \scal{\sigma_2(v_1)}{\varepsilon(u_2)}_{\CC^3 \times \CC^3} \; \dd x - k^2 \int_K v_1 \overline{u_2} \; \dd x - \scal{B^{(2)} v}{u_2}_{\partial K}.
\end{align*}
We note that $B^{(1)} v_1 = B^{(2)} v$ on $\partial \Omega \setminus \Gamma_0$ and due to local regularity theorems
we may assume that $w \in H^2$ in a neighbourhood of $\partial K \cap \Omega$ such that $B^{(1)} v_1 = B^{(2)} v$ on $\partial K \cap \Omega$.
This implies
\begin{align*}
	\int_K \scal{\sigma_2(v_1)}{\varepsilon(u_2)}_{\CC^3 \times \CC^3} \; \dd x &= \scal{B^{(2)} v}{u_2}_{\partial K} + k^2 \int_K v_1 \overline{u_2} \; \dd x\\
	&=  \scal{B^{(1)} v_1}{u_2}_{\partial K} + k^2 \int_K v_1 \overline{u_2} \; \dd x\\
	&=  \int_K \scal{\sigma_1(v_1)}{\varepsilon(u_2)}_{\CC^3 \times \CC^3} \; \dd x.
\end{align*}
In particular we have 
\[
    H_K(v_1|_K , u_2 ) = 0.
\]
\end{proof}

\appendix
\section{Radiation Conditions}
\label{sec:appendix}
For the sake of simplicity we assume that $\Gamma_0=\varnothing$. The general case follows likewise. In order to define the notion of incoming and outgoing functions, we follow the approach in \cites{KamNaz,NazPlam}. To this end we consider the operator $L = L(x_2,x_3, \partial_{x_1}, \partial_{x_2}, \partial_{x_3})$ corresponding to the Lam\'e coefficients at infinity, i.e. $\lambda$ and $\mu$ are given as in \eqref{eq:lame-infty}. For $k \in \RR$ we are concerned with  the following family of boundary value problems 
\[ \mathfrak A(\xi) \colon  H^2(G;\CC^3) \to \begin{array}{c} L^2(G;\CC^3) \\ \oplus \\ \CC^3 \end{array} , \qquad \mathfrak A(\xi) u = \begin{pmatrix} (L(x_2,x_3, i \xi , \partial_{x_2} , \partial_{x_3})  - k^2 )u \\[3pt] u|_{\partial G}  \end{pmatrix} . \]
It is well-known that for every $\xi \in \CC$ the operator  $\mathfrak A(\xi)$ is a Fredholm operator with vanishing index. Indeed, for the last assertion we observe that for real $\xi \in \RR$ the adjoint boundary value problem coincides with the original one.  Moreover, $\mathfrak A(\xi)$ is invertible for sufficiently large $\xi \in \RR$, and thus, it is invertible for all $\xi \in \CC$ with the possible exception of a discrete subset, cf. \cite{GohbergSigal} We put 
\begin{align*}
	\Xi := \{ \xi \in \CC : \ker \mathfrak A(\xi) \neq \{ 0 \} \} . 
\end{align*}
The elements of $\Xi$ are called characteristic values of the operator pencil $\mathfrak A$. We consider the boundary value problem
\begin{align}\label{eq:bvp_appendix}
	\left\{ \begin{aligned}
	(L^{(i)} -k^2) u  &= f  \text{ in } \Omega \\
	u &= g  \text{ on } \partial \Omega ,
\end{aligned} \right. 
\end{align}
where now $f \in H^1_{-\beta}(\Omega;\CC^3)^*$ and $g \in H_\beta^{1/2}(\partial \Omega ;\CC^3)$. The Fredholm property reads as follows.
\begin{lemma}
Let $\beta \in \RR$ such that $\Xi \cap (\RR \pm i \beta) = \varnothing$. Then either one of the following assertions hold true: 
\begin{enumerate}
	\item The homogeneous problem ($f=0$ and $g=0$) has the  unique solution $u=0$. Then \eqref{eq:bvp_appendix} has a unique solution $u \in H^1_\beta(\Omega;\CC^3)$.
 	\item There exists $n$ solutions $U_j \in H^1_\beta(\Omega;\CC^3)$ of the homogeneous problems. Then \eqref{eq:bvp_appendix}  is solvable if and only if 
 	\begin{align*}
        \scal{U_j}{f} + \scal{B^{(i)} U_j}{g}_{\partial \Omega}  = 0 , \qquad \text{for }   j =1, \ldots, n .  
 	\end{align*}
	\end{enumerate}
\end{lemma}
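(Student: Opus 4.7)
The plan is to recast \eqref{eq:bvp_appendix} as a bounded linear map
\[
    \mathcal{L}^{(i)}_\beta : H^1_\beta(\Omega;\CC^3) \longrightarrow H^1_{-\beta}(\Omega;\CC^3)^* \oplus H^{1/2}_\beta(\partial\Omega;\CC^3), \quad u \mapsto \bigl((L^{(i)}-k^2)u,\; u|_{\partial\Omega}\bigr),
\]
and to show it is Fredholm of index zero in the Kondratev--Nazarov--Plamenevsky framework \cite{NazPlam}. Granting this, the dichotomy is a standard consequence of the abstract Fredholm alternative, and the compatibility condition in part~(ii) is produced by Green's formula \eqref{eq:Greens_formula}.

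First, I would build a parametrix on the cylindrical ends $\RR\times G$. Since the Lam\'e parameters are $x_1$--independent there, a Fourier transform in $x_1$ conjugates the boundary value problem to the operator pencil $\mathfrak{A}(\xi)$. By hypothesis $\Xi\cap(\RR+i\beta)=\varnothing$, so $\mathfrak{A}(\xi)^{-1}$ extends holomorphically through the contour $\RR+i\beta$; the shifted contour integral
\[
    \mathcal{P}_{\mathrm{end}}(f,g)(x_1,\cdot) = \frac{1}{2\pi}\int_{\RR+i\beta} e^{ix_1\xi}\mathfrak{A}(\xi)^{-1}\widehat{(f,g)}(\xi,\cdot)\,\dd\xi
\]
then defines a bounded right inverse on the weighted Sobolev spaces over each end. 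Gluing $\mathcal{P}_{\mathrm{end}}$ with a classical local parametrix near the compact core of $\Omega$ via a partition of unity yields a global parametrix for $\mathcal{L}^{(i)}_\beta$ whose error terms are compact, since the commutators with the gluing cutoffs gain a derivative on a relatively compact set. This gives Fredholmness of $\mathcal{L}^{(i)}_\beta$.

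Next, to obtain $\mathrm{ind}\,\mathcal{L}^{(i)}_\beta = 0$, I would exploit formal self-adjointness of the Lam\'e system with Dirichlet conditions: this forces $\mathfrak{A}(\xi)^* = \mathfrak{A}(\bar\xi)$, so $\Xi$ is symmetric with respect to the real axis and in particular $\Xi\cap(\RR-i\beta)=\varnothing$ as well. The formal adjoint of $\mathcal{L}^{(i)}_\beta$ under the pairing coming from \eqref{eq:Greens_formula} is canonically identified with $\mathcal{L}^{(i)}_{-\beta}$; combined with the local constancy of the index as $\beta$ varies within a single connected component of $\RR\setminus\{\Im z : z\in\Xi\}$, this forces $\mathrm{ind}\,\mathcal{L}^{(i)}_\beta = 0$. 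The compatibility relation in part~(ii) then follows by applying \eqref{eq:Greens_formula} to a candidate solution $u$ of \eqref{eq:bvp_appendix} paired with a kernel element $U_j$ (which satisfies $U_j|_{\partial\Omega}=0$ and $(L^{(i)}-k^2)U_j=0$); the boundary and interior contributions from $U_j$ drop out, leaving exactly the displayed identity.

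The main technical obstacle lies in the parametrix construction on the cylindrical ends: one needs the uniform-in-$|\Re\xi|$ boundedness of $\mathfrak{A}(\xi)^{-1}$ along the shifted line $\RR+i\beta$, without which $\mathcal{P}_{\mathrm{end}}$ does not define a bounded operator between weighted Sobolev spaces. Because the principal symbol of the Lam\'e operator is not scalar, the required Shapiro--Lopatinskii estimates do not reduce to the standard scalar Laplace case; they must be derived within the Douglis--Nirenberg framework and checked to persist uniformly as $|\Re\xi|\to\infty$. This is where the bulk of the technical effort in the Nazarov--Plamenevsky machinery is concentrated.
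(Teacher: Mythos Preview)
Your outline is essentially the content of the references the paper invokes: the paper gives no self-contained argument but simply cites \cite{KoMaRo}*{Corollary~3.4.2} (together with \cite{McLean}*{Theorem~4.10} for mixed boundary conditions), and the Kondrat'ev--Nazarov--Plamenevsky parametrix construction on the cylindrical ends, followed by identification of the cokernel via Green's formula, is precisely the machinery behind that citation.

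One step in your sketch needs care. Your deduction that $\mathrm{ind}\,\mathcal{L}^{(i)}_\beta = 0$ from duality together with local constancy of the index on connected components of $\RR\setminus\{\Im z : z\in\Xi\}$ only goes through when $\beta$ and $-\beta$ lie in the \emph{same} component. The hypothesis $\Xi\cap(\RR\pm i\beta)=\varnothing$ does not exclude characteristic values in the open strip $|\Im\xi|<|\beta|$ (in particular real ones, which are exactly the propagating modes treated later in the appendix), and across such values the index jumps by their total multiplicity and is typically nonzero. What formal self-adjointness actually delivers is the identification $\mathrm{coker}\,\mathcal{L}^{(i)}_\beta \cong \ker\mathcal{L}^{(i)}_{-\beta}$: the compatibility functionals in part~(2) are furnished by homogeneous solutions in the \emph{dual} weight class $H^1_{-\beta}$ (which embed in $H^1_\beta$ for $\beta>0$, so the lemma's phrasing is consistent). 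The dichotomy (1)/(2) then follows from this identification and the inclusion $\ker\mathcal{L}^{(i)}_{-\beta}\subset\ker\mathcal{L}^{(i)}_\beta$, without requiring the index itself to vanish.
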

The proof follows as in \cite{KoMaRo}*{Corollary 3.4.2}. For the case of the mixed problem this has to be combined with the ideas in the proof of \cite{McLean}*{Theorem 4.10}. 

As a next step we want to consider the asymptotic behaviour of solutions of the boundary value problem \eqref{eq:bvp_appendix}.
For $\xi_0 \in \Xi$, we call
$v_0, \dotsc, v_\ell \in H^2(G) \setminus \{0\}$  a \emph{Jordan chain} of length $\ell+1$ (associated with $v_0$) for $\mathfrak A$, if
\begin{align*}
	0 = \sum_{q=0}^{j} \frac{1}{q!} \; \frac{\dd^q}{\dd \xi^q} \mathfrak A (\xi)\big|_{\xi=\xi_0} v_{j-q} , \qquad \text{for all } 
	j = 0, \dotsc , \ell . 
\end{align*}
Note that in particular we have that $v_0 \in \ker \mathfrak A(\xi_0)$.

If $v_0, \ldots, v_\ell$ is a Jordan-chain, then we define the  functions
\begin{equation}\label{eq:radiating_functions}
	V_j (x_1,x_2,x_3) = e^{i \xi_0 x_1} \sum_{q=0}^j \frac{(it)^q}{q !  } v_{j-q} (x_2,x_3) , \qquad j = 0, \ldots , \ell . 
\end{equation}

These functions satisfy the equation  $(L  - k^2) V_j  = 0$ in $\RR \times G$ and $V_j|_{\RR \times \partial G} = 0$. In what follows we assume that $\beta >0$ is chosen such that 
\[  \Xi \cap \{ \xi \in \CC : |\Im(\xi)| < \beta \} \subseteq \RR .  \]

We note that if  $\Xi \cap \RR = \varnothing$, then we do not need to impose any radiation conditions. 
Assume that $\Xi \cap \RR \neq \varnothing$ and denote by $N$ the total multiplicity of the characteristic values in $\Xi \cap \RR$. Note that $N$ is even. As before we obtain functions  $V_j$, $j=1, \ldots,N$.
Let $\chi_+ \in C^\infty(\RR)$ be chosen such that $\chi_+(t) = 0$ for $t \le R$ and $\chi_+(t) =1 $ for $t \ge R+1$, where $R > 0$ is sufficiently large. We put $\chi_- (x_1) \coloneqq \chi_+ (-x_1)$. Then we have the following result:
\begin{lemma}[cf. \cite{NazPlam}]\label{lemma:expansion_radiating_waves}
	Let $f \in H^1_{\beta} (\Omega;\CC^3)^*$ and $g \in H^{1/2}_{-\beta}(\partial \Omega;\CC^3)$ and let $u \in H^{1,\beta}(\Omega;	\CC^3)$ be a solution of  \eqref{eq:bvp_appendix}.
	Then there exists $\alpha_1, \ldots, \alpha_{N}, \beta_1, \ldots, \beta_N \in \CC$ such that 
	\begin{align*}
		u -   \chi_+  \sum_{j=1}^{N} \alpha_j  V_j  +   \chi_- \sum_{j=1}^{N} \beta _j   V_j  \in H^1_{-\beta}(\Omega;\CC^3). 
	\end{align*}
\end{lemma}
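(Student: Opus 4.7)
The plan is to follow the standard Kondratiev/Gohberg–Sigal method on cylindrical ends, in the spirit of \cite{KoMaRo} and \cite{NazPlam}. First, I would localize by writing $u = \chi_+ u + \chi_- u + \chi_0 u$ with $\chi_0 = 1 - \chi_+ - \chi_-$ compactly supported; the middle piece $\chi_0 u$ lies in $H^1_{-\beta}(\Omega;\CC^3)$ trivially, so it suffices to analyze $\chi_\pm u$ on the cylindrical ends, which we regard as functions on $\RR \times G$ supported in $\{\pm x_1 \ge R\}$. Each of them satisfies an equation of the shape
\begin{align*}
    (L - k^2)(\chi_\pm u) = \chi_\pm f + [L - k^2, \chi_\pm] u =: \tilde f_\pm,
\end{align*}
with $\tilde f_\pm$ compactly decaying in $x_1$ (the commutator is supported in the cutoff region where $u$ is locally in $H^1$ by interior regularity). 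A preliminary extension allows us to reduce to the case $g=0$ on the cylindrical ends, so we may assume that the boundary trace vanishes on $\RR \times \pa G$ for $|x_1|$ large.

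Next, I would apply the partial Fourier–Laplace transform in $x_1$. Since $\chi_\pm u \in H^1_{\beta}$, by the weighted Paley–Wiener theorem the transform $\widehat{\chi_\pm u}(\xi, \cdot)$ is holomorphic in $\xi$ in the half-plane $\mp \Im\xi > \beta$, and an analogous (larger) holomorphy domain holds for $\widehat{\tilde f_\pm}$ since $\tilde f_\pm$ has better localization. The equation becomes
\begin{align*}
    \mathfrak A(\xi)\, \widehat{\chi_\pm u}(\xi, \cdot) = \widehat{\tilde f_\pm}(\xi, \cdot),
\end{align*}
so $\widehat{\chi_\pm u} = \mathfrak A(\xi)^{-1} \widehat{\tilde f_\pm}$. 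By the Gohberg–Sigal theory cited in the excerpt, $\mathfrak A(\xi)^{-1}$ extends meromorphically to $\CC$ with poles exactly at the characteristic values $\Xi$. Under the hypothesis $\Xi \cap \{|\Im\xi| < \beta\} \subseteq \RR$ and $\Xi \cap (\RR \pm i\beta) = \varnothing$, the only singularities in the strip $\{|\Im\xi| \le \beta\}$ lie on the real line.

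Finally, I recover $\chi_\pm u$ by inverse Fourier transform along a horizontal contour in the half-plane of initial holomorphy, and then shift the contour across the strip to the line $\Im\xi = \pm\beta$. The contour shift picks up residues at each $\xi_0 \in \Xi \cap \RR$, while the remainder integral on $\Im\xi = \pm\beta$ defines, by the converse Paley–Wiener direction, a function in $H^1_{-\beta}(\Omega;\CC^3)$. The residue computation, which is the main technical point, is a direct application of the Laurent expansion of $\mathfrak A(\xi)^{-1}$ around $\xi_0$ in terms of its canonical system of Jordan chains $v_0, \dots, v_\ell$: the identity
\begin{align*}
    \frac{1}{2\pi i} \oint_{\xi_0} e^{i\xi x_1}\, \mathfrak A(\xi)^{-1} h\, \dd \xi = e^{i\xi_0 x_1} \sum_{q=0}^j \frac{(ix_1)^q}{q!}\, v_{j-q}(x_2,x_3) \cdot (\text{scalar depending on } h)
\end{align*}
reproduces exactly the functions $V_j$ from \eqref{eq:radiating_functions}. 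Gathering the residues from the $\chi_+$ end yields the coefficients $\alpha_j$, and similarly the $\chi_-$ end produces the $\beta_j$, proving the claim. The main obstacle is precisely this residue-to-Jordan-chain identification together with justifying that $\mathfrak A(\xi)^{-1}$ has the required pole structure; the contour shift and the Paley–Wiener bookkeeping are otherwise routine once the analytic framework is set up.
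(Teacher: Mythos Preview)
The paper does not give its own proof of this lemma: it is stated with the attribution ``cf.\ \cite{NazPlam}'' and used as a black box, so there is nothing to compare against beyond the cited literature. Your sketch is precisely the standard Kondratiev/Gohberg--Sigal argument that one finds in \cite{NazPlam} and \cite{KoMaRo}: localize to each cylindrical end, take the partial Fourier--Laplace transform in the axial variable, invert $\mathfrak A(\xi)$ meromorphically, and shift the integration contour across the strip $|\Im\xi|<\beta$, collecting residues at the real characteristic values which produce the $V_j$ via the Jordan-chain expansion. That is the correct and intended route, and your identification of the residue computation with \eqref{eq:radiating_functions} is on target.
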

This leads us to consider the space
\begin{align*}
	\mathbb D &\coloneqq \sspan \{ \chi_+ V_j \colon j=1, \ldots, N \} + \sspan \{ \chi_- V_j \colon j=1, \ldots, N \} .
\end{align*}
Let
\begin{align*}
	q(u,v) \coloneqq \int_{\Omega} L_i u \overline{v} \; \dd x -  \int_{\Omega} u  \overline{L_i v} \; \dd x + \int_{\partial \Omega} B_i u \overline{v} \; \dd s - \int_{\partial \Omega} u \overline{ B_i v} \; \dd s , \quad  u,v \in \mathbb D . 
\end{align*}
Then we may choose a basis  $U_{1,+} \ldots, U_{N,+}$, $U_{1,-}, \ldots U_{N,- }$ of $\mathbb D$ such that 
\begin{align*}
	q(U_{j,+} , U_{j,+}) =  i\,\delta_{jk}  , \qquad q(U_{j,-} , U_{k,-}) = - i\,\delta_{jk} , 
	\qquad q(U_{j,+} , U_{k,-}) = 0 . 
\end{align*}
Then a function $u \in H^1_{\beta} (\Omega;\CC^3) $ will be  called outgoing if there are coefficients $c_j \in \CC$ such that
\begin{align}
	u  -  \sum_{j=1}^{n} c_j U_{j,+} \in H^1_{-\beta} (\Omega;\CC^3) 
\end{align}
Analogously, $u$ is called incoming if $u  -  \sum_{j=1}^{n} c_j U_{j,-} \in H^1_{-\beta} (\Omega;\CC^3)$. Then Lemma  \ref{lemma:solvability_bvp} will be a consequence of the Fredholm property in \cite{KamNaz}*{Theorem 2.2}. 
\begin{bibdiv}
\begin{biblist}
\bibselect{biblist}
\end{biblist}
\end{bibdiv}
\end{document}